\documentclass{amsart}
\usepackage{graphicx,mathrsfs,setspace,xypic,fancyhdr}
\vfuzz2pt 
\hfuzz2pt 
\newtheorem{thm}{Theorem}[section]
\newtheorem{cor}[thm]{Corollary}
\newtheorem{lem}[thm]{Lemma}
\newtheorem{prop}[thm]{Proposition}
\theoremstyle{definition}
\newtheorem{defn}[thm]{Definition}
\theoremstyle{remark}
\newtheorem{rem}[thm]{Remark}
\numberwithin{equation}{section}

\pagestyle{fancy}
\lhead{}
\chead{\small PFAFFIAN QUARTIC SURFACES AND CLIFFORD ALGEBRAS}
\rhead{}
\lfoot{}
\cfoot{}
\rfoot{}

\begin{document}

\title{Pfaffian Quartic Surfaces and Representations of Clifford Algebras}
\author{Emre Coskun}
\address{Department of Mathematics, University of Western Ontario, London, ON, N6A 5B7 CANADA}
\email{ecoskun@uwo.ca}
\author{Rajesh S. Kulkarni}
\address{Department of Mathematics, Michigan State University, East Lansing, MI 48824}
\email{kulkarni@math.msu.edu}
\author{Yusuf Mustopa}
\address{Department of Mathematics, University of Michigan, Ann Arbor, MI 48109}
\email{ymustopa@umich.edu}


\date{\today}
\begin{abstract}
Given a nondegenerate ternary form $f=f(x_1,x_2,x_3)$ of degree 4 over an algebraically closed field of characteristic zero, we use the geometry of K3 surfaces and van den Bergh's correspondence between representations of the generalized Clifford algebra $C_f$ associated to $f$ and Ulrich bundles on the surface $X_f:=\{w^{4}=f(x_1,x_2,x_3)\} \subseteq \mathbb{P}^3$ to construct a positive-dimensional family of irreducible representations of $C_f.$

The main part of our construction, which is of independent interest, uses recent work of Aprodu-Farkas on Green's Conjecture together with a result of Basili on complete intersection curves in $\mathbb{P}^{3}$ to produce simple Ulrich bundles of rank 2 on a smooth quartic surface $X \subseteq \mathbb{P}^3$ with determinant $\mathcal{O}_X(3).$  This implies that every smooth quartic surface in $\mathbb{P}^3$ is the zerolocus of a linear Pfaffian, strengthening a result of Beauville-Schreyer on general quartic surfaces.
\end{abstract}
\maketitle
\thispagestyle{plain}

\section{Introduction}

This article lies at the intersection of four topics: representations of generalized Clifford algebras, stable vector bundles on surfaces, linear Pfaffian representations of hypersurfaces, and the Brill-Noether theory of complete intersection curves. 

Let $f=f(x_1, \cdots ,x_n)$ be a nondegenerate homogeneous form of degree $d \geq 2$ in $n \geq 2$ variables over a field $k$ (which we assume throughout to be algebraically closed of characteristic zero). The \textit{generalized Clifford algebra} of $f,$ which we denote by $C_f,$ is the quotient of the free associative $k-$algebra $k\{u_1, u_2, u_3\}$ by the two-sided ideal generated by the relations
\[
\left(\sum_i \alpha_i u_i\right)^d = f(\alpha_1, \cdots ,\alpha_n) \hskip5pt \textnormal{ for all } \alpha_1, \cdots ,\alpha_n \in k.
\]
M. Van den Bergh showed in \cite{vdB} that $dr-$dimensional matrix representations of $C_f$ are in one-to-one correspondence with vector bundles on the degree-$d$ hypersurface $X_f := \{w^d = f(x_1, \cdots ,x_n)\}$ in $\mathbb{P}^n$ whose direct image under the natural linear projection $\pi:X_f \rightarrow \mathbb{P}^{n-1}$ (defined by forgetting the $w-$coordinate) is a trivial vector bundle of rank $dr$ on $\mathbb{P}^{n-1}.$  Since the dimension of any matrix representation of $C_f$ is divisible by $d,$ this accounts for all finite-dimensional representations of $C_f$ (Proposition \ref{divbyd}).

A vector bundle $\mathcal{E}$ satisfies this condition if and only if it is arithmetically Cohen-Macaulay (ACM) with Hilbert polynomial equal to $dr \binom{t + n - 1}{n - 1}$ (Proposition \ref{ulrichacm}). Such vector bundles occur naturally in other algebraic and algebro-geometric contexts (e.g. \cite{ESW, CH, CKM, MP}), and in the literature they are known as {\it Ulrich bundles}.  Since Ulrich bundles are semistable (e.g. Proposition \ref{cliffgiesstab}), it follows that representations of $C_f$ can be parametrized (up to a suitable notion of equivalence) by a union of quasi-projective schemes.

In this paper we are concerned with the case where $f=f(x_1,x_2,x_3)$ is a general nondegenerate ternary quartic form, i.e. where $X_f$ is a smooth quartic surface in $\mathbb{P}^3.$  The Ulrich bundles on $X_f$ corresponding to irreducible representations of $C_f$ are precisely those Ulrich bundles that are stable with respect to $\mathcal{O}_{X_f}(1)$ (Proposition \ref{lem-irred-stable}), so the natural first step in our study is to construct stable Ulrich bundles on $X_f.$  Since one cannot expect $X_f$ to admit an Ulrich line bundle (this essentially follows from Proposition \ref{curvenef}) we look to Ulrich bundles of rank 2, namely those whose first Chern class is a multiple of the hyperplane class.    

If $\mathcal{E}$ is such an Ulrich bundle, its Hilbert polynomial is $8\binom{t+2}{2},$ so we have $c_1(\mathcal{E})=3H$ and $c_2(\mathcal{E})=14.$  The main ingredient in our construction of stable Ulrich bundles on $X_f$ is the following general result.  Recall that a vector bundle is \textit{simple} if its only endomorphisms are scalar multiples of the identity.

\begin{thm}
\label{allquart}
Every smooth quartic surface in $\mathbb{P}^{3}$ admits a 14-dimensional family of simple Ulrich bundles of rank 2 with $c_1=3H$ and $c_2=14.$
\end{thm}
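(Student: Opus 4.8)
The plan is to realize the desired bundles as Lazarsfeld--Mukai bundles of pencils on curves in $|\mathcal{O}_X(3)|$, and then to promote a single such bundle to a $14$-dimensional family using Mukai's theory of moduli of simple sheaves on a K3 surface.

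Let $C\in|\mathcal{O}_X(3)|$ be general. Since $H^0(\mathcal{O}_{\mathbb{P}^3}(3))\to H^0(\mathcal{O}_X(3))$ is surjective, $C$ is cut out on $X$ by a general cubic surface, so $C$ is a smooth complete intersection curve of type $(3,4)$ in $\mathbb{P}^3$; it has genus $g=19$, with $\omega_C\cong\mathcal{O}_C(3)$ and $N_{C/X}\cong\omega_C$ (since $\omega_X\cong\mathcal{O}_X$). Given a base-point-free $B\in\operatorname{Pic}^{14}(C)$ with $h^0(B)=2$, let $\mathcal{E}=\mathcal{E}_{C,B}$ be the dual of the kernel of the evaluation map $H^0(C,B)\otimes\mathcal{O}_X\to\iota_{*}B$, where $\iota\colon C\hookrightarrow X$. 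By Grothendieck--Riemann--Roch for $\iota$ and Riemann--Roch on $X$ one checks that $\mathcal{E}$ is a rank-$2$ vector bundle with $c_1(\mathcal{E})=3H$, $c_2(\mathcal{E})=14$, $h^0(\mathcal{E})=g-\deg B+3=8$, $h^1(\mathcal{E})=h^2(\mathcal{E})=0$, and $\mathcal{E}^{\vee}\cong\mathcal{E}(-3H)$. The first task is to decide when $\mathcal{E}_{C,B}$ is Ulrich. Since $X$ is ACM and $\mathcal{E}$ is self-dual up to the twist by $\mathcal{O}_X(-3)$, Serre duality collapses the Ulrich vanishings to the single requirement $H^{\bullet}(\mathcal{E}(-1))=0$. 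Because $\mathcal{E}(-1)\cong\mathcal{E}^{\vee}(2)$, twisting the defining sequence of $\mathcal{E}^{\vee}$ by $\mathcal{O}_X(2)$ and using $H^1(\mathcal{O}_X(2))=0$ and $H^1(C,B\otimes\mathcal{O}_C(2))=0$ identifies $H^{\bullet}(\mathcal{E}(-1))=0$ with the statement that the multiplication map $H^0(C,B)\otimes H^0(C,\mathcal{O}_C(2))\to H^0(C,B\otimes\mathcal{O}_C(2))$ is an isomorphism; by the base-point-free pencil trick this is equivalent to $H^0(C,\mathcal{O}_C(2)\otimes B^{-1})=0$, i.e.\ to $\mathcal{O}_C(1)\otimes B$ being nonspecial.

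Next I would handle simplicity. A nonscalar endomorphism or a destabilizing subsheaf of the rank-$2$ bundle $\mathcal{E}$ yields a sub-line bundle $L\subset\mathcal{E}$ with $L\cdot H\geq 6$; tensoring the defining sequence of $\mathcal{E}^{\vee}$ by $\mathcal{O}_X(-L)$ shows that such an $L$ would force the line bundle $\omega_C\otimes B^{-1}\otimes\mathcal{O}_C(-L)$, which has degree $22-3(L\cdot H)\leq 4$ on $C$, to be effective. This is vacuous once $L\cdot H\geq 8$ (negative degree); it never occurs when $\operatorname{Pic}(X)=\mathbb{Z}H$ (no class has $L\cdot H\in\{6,7\}$); and in general it is excluded for a suitably general $B$, the relevant classes $L$ being bounded via the Hodge index theorem. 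Thus everything reduces to producing, on the curve $C$, a base-point-free degree-$14$ pencil $B$ general enough that $\mathcal{O}_C(1)\otimes B$ is nonspecial and $\mathcal{E}_{C,B}$ is simple. This is the crux, and I expect it to be the main obstacle: a general $C\in|\mathcal{O}_X(3)|$ is \emph{not} Brill--Noether general, since it carries the special net $|\mathcal{O}_C(1)|$, a $g^3_{12}$, so classical Brill--Noether theory does not apply off the shelf. Here the two external inputs enter. Basili's theorem computes the Clifford index (hence the gonality) of the complete intersection curve $C$; and, because $C$ lies on the K3 surface $X$, the Aprodu--Farkas theorem on Green's conjecture for curves on K3 surfaces pins down the linear strand of the canonical resolution of $C$, and through it the Brill--Noether geometry of $C$ in the range $d=14$, $r=1$. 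Combining the two produces a base-point-free $g^1_{14}$ on $C$ with the required genericity, hence one simple Ulrich bundle $\mathcal{E}_0$ of rank $2$ with $c_1=3H$ and $c_2=14$.

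Finally, the passage from one bundle to a $14$-dimensional family is formal. The Mukai vector of $\mathcal{E}_0$ is $v=(2,3H,6)$, with $\langle v,v\rangle=(3H)^2-2\cdot 2\cdot 6=12$, so by Mukai's theorem the moduli space of simple sheaves on $X$ with Mukai vector $v$ is smooth at $[\mathcal{E}_0]$ of dimension $\langle v,v\rangle+2=14$. Since the Ulrich conditions $H^{\bullet}(\mathcal{E}(-1))=0$ are open, a general deformation of $\mathcal{E}_0$ is still a simple rank-$2$ Ulrich bundle with $c_1=3H$ and $c_2=14$, which gives the asserted $14$-dimensional family.
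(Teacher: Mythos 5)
Your overall architecture (Lazarsfeld--Mukai bundle from a pencil on $C\in|\mathcal{O}_X(3)|$, reduction of the Ulrich condition to $H^{0}(\mathcal{O}_C(2)\otimes B^{-1})=0$, Basili plus Aprodu--Farkas for the Brill--Noether input, Mukai's smoothness theorem for the $14$-dimensional family) matches the paper's, and your reduction of the Ulrich vanishings to the nonspeciality of $\mathcal{O}_C(1)\otimes B$ is correct. But the sentence ``Combining the two produces a base-point-free $g^1_{14}$ on $C$ with the required genericity, hence one simple Ulrich bundle'' is exactly where the real difficulty lives, and it is not justified. Basili's theorem and the Aprodu--Farkas bound control gonality, the Clifford index, and $\dim W^1_{13}(C)$ (hence base-point-freeness and $h^0(B)=2$ for the general member of any component of $W^1_{14}(C)$), but they say nothing about the speciality of $\mathcal{O}_C(1)\otimes B$. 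The locus of ``bad'' pencils, namely $\{\mathcal{O}_C(2)(-D): D\in C^{(10)}\ \mathrm{effective}\}\subseteq \mathrm{Pic}^{14}(C)$, is $10$-dimensional, whereas components of $W^1_{14}(C)$ need only have dimension $\geq 7$, so a component can a priori be entirely contained in it; the paper shows in its final section that this actually happens: for a suitable smooth $C\in|\mathcal{O}_X(3)|$ there is a generically smooth $7$-dimensional component of $W^1_{14}(C)$ whose general member yields a simple, globally generated, weakly Ulrich bundle that is \emph{not} Ulrich. So on a fixed curve your argument only delivers weakly Ulrich bundles, not Ulrich ones, and the passage from one to the other cannot be waved through.

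The paper's way around this is a global argument that your proposal is missing: rather than working on one curve, it lets $C$ vary over the $19$-dimensional system $|\mathcal{O}_X(3)|$, rewrites each weakly Ulrich bundle as a Serre extension $0\to\mathcal{O}_X\to\mathcal{E}\to\mathcal{I}_{Z|X}(3)\to 0$ with $Z\in X^{[14]}$, shows $\mathcal{E}$ is Ulrich iff $Z$ lies on no quadric, and then compares dimensions in the Hilbert scheme: the $Z$'s arising from the family sweep out a $21$-dimensional locus, while the locus of length-$14$ schemes that lie on a quadric \emph{and} are Cayley--Bacharach with respect to $\mathcal{O}_X(3)$ (the Cayley--Bacharach condition is essential, since without it the quadric locus is $23$-dimensional) has dimension at most $20$. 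Some version of this, or another genuine argument producing a single pencil with $H^0(\mathcal{O}_C(2)\otimes B^{-1})=0$, is needed to close your gap. Secondarily, your simplicity argument (``excluded for a suitably general $B$, the relevant classes $L$ being bounded via the Hodge index theorem'') is also only a sketch for quartics of large Picard number; the paper instead gets simplicity on every smooth quartic from the cohomological condition $h^0(\mathcal{L}^{-2}(3))=0$, verified by exhibiting the explicit point $\mathcal{O}_C(1)(p+q)$ of $W^1_{14}(C)$, and this is a much easier condition to arrange than the Ulrich one.
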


Before turning to its role in constructing stable Ulrich bundles on $X_f,$ we discuss the relevance of this theorem to linear Pfaffian representations of quartic surfaces.  It is known that the existence of a rank-2 Ulrich bundle on a hypersurface $Y \subseteq \mathbb{P}^{n}$ of degree $d \geq 2$  with first Chern class $(d-1)H$ is equivalent to the existence of an $2d \times 2d$ skew-symmetric matrix $M$ of linear forms whose Pfaffian cuts out $Y;$ this follows from Theorem B in \cite{Bea} or Theorem 0.3 in \cite{ESW}.  We then have the following consequence of Theorem \ref{allquart}:

\begin{cor}
\label{allpfaff}
Every smooth quartic surface in $\mathbb{P}^3$ admits a linear Pfaffian representation.
\end{cor}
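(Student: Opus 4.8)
The plan is to obtain Corollary~\ref{allpfaff} as a direct consequence of Theorem~\ref{allquart} together with the correspondence between rank-$2$ Ulrich bundles and linear Pfaffian data recalled above. Let $X \subseteq \mathbb{P}^3$ be a smooth quartic surface, viewed as a hypersurface of degree $d = 4$ in $\mathbb{P}^n$ with $n = 3$. By Theorem~\ref{allquart}, $X$ admits a simple---in particular nonzero---Ulrich bundle $\EE$ of rank $2$ with $c_1(\EE) = 3H = (d-1)H$. Being Ulrich, $\EE$ is ACM on $X$ with the Hilbert polynomial forced by its rank (cf.\ Proposition~\ref{ulrichacm}), so it satisfies the cohomological hypotheses of Theorem B of \cite{Bea} (equivalently Theorem 0.3 of \cite{ESW}). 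That correspondence then produces an $8 \times 8 = 2d \times 2d$ skew-symmetric matrix $M$ of linear forms in the coordinates of $\mathbb{P}^3$ whose Pfaffian cuts out $X$.

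To finish, observe that $\mathrm{Pf}(M)$ is a form of degree $4$ vanishing on the integral hypersurface $X$, hence a nonzero scalar multiple of the defining quartic $f$ of $X$; after a harmless rescaling of $M$ we may take $\mathrm{Pf}(M) = f$, so that $M$ is a linear Pfaffian representation of $X$ in the usual sense. Since Theorem~\ref{allquart} in fact supplies a $14$-dimensional family of such bundles, this construction even yields a positive-dimensional family of Pfaffian representations, though that refinement is not needed for the statement.

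There is no serious obstacle at this point: the whole content of the corollary is carried by Theorem~\ref{allquart}, and what remains is the formal passage through the Beauville--Eisenbud--Schreyer--Weyman dictionary. The one point meriting (routine) care is the verification that a rank-$2$ Ulrich bundle on a smooth quartic surface genuinely satisfies the precise hypotheses of that dictionary---immediate from the definition of an Ulrich bundle as one admitting a linear resolution over the ambient $\mathbb{P}^3$. The improvement over the earlier work of Beauville and Schreyer is that \emph{every} smooth quartic surface is now treated, not merely a general one.
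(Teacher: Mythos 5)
Your proposal is correct and follows exactly the paper's route: Theorem \ref{allquart} supplies a rank-$2$ Ulrich bundle with $c_1 = 3H = (d-1)H$ on any smooth quartic, and Proposition \ref{beapfaff} (the Beauville/Eisenbud--Schreyer--Weyman dictionary) converts it into an $8\times 8$ linear Pfaffian representation. The additional remarks about rescaling the Pfaffian and the $14$-dimensional family are fine but not needed.
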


The existence of linear Pfaffian representations for a general surface of degree $d=4, \cdots ,15$ in $\mathbb{P}^3$ was shown by Beauville-Schreyer (Proposition 7.6(b) of \cite{Bea}) with the help of a Macaulay 2 computation, and a similar method was used by Iliev-Markushevich \cite{IM} to prove the existence for the case of a general quartic threefold in $\mathbb{P}^4.$  However, the proofs do not yield explicit Zariski-open subsets of the respective spaces of hypersurfaces.  A non-computer-assisted proof of the result on general quartic threefolds in \textit{loc.~cit.} was recently found by Brambilla-Faenzi, who showed that a smooth quartic threefold is linear Pfaffian if its Fano variety of lines has a generically reduced component ((ii) of Theorem 5.5 in \cite{BF}).

By the Noether-Lefschetz theorem, the Picard group of a general quartic surface in $\mathbb{P}^3$ is generated by the class of a hyperplane.  Together with Corollary 6.6 of \cite{Bea}, this implies that the general quartic surface in $\mathbb{P}^{3}$ does not admit any Ulrich line bundles.  Any strictly semistable Ulrich bundle is destabilized by a subbundle which is Ulrich (Lemma \ref{cliffdestab}), so we may conclude that the rank-2 Ulrich bundles constructed in Proposition 7.6(b) of \textit{loc.~cit.} are stable.  For a general ternary quartic form $f,$ the Picard number of $X_f$ is 8, so a different approach is required to produce stable Ulrich bundles on $X_f$.  

We combine Theorem \ref{allquart} with an analysis of the effective cone of $X_f$ to prove the following:   

\begin{thm}
\label{main}
If $f=f(x_1,x_2,x_3)$ is a general ternary quartic form, then $X_f$ admits a 14-dimensional family of stable Ulrich bundles of rank 2 with $c_1=3H$ and $c_2=14.$  In particular, there exists a 14-dimensional family of 8-dimensional irreducible representations of the generalized Clifford algebra $C_f.$
\end{thm}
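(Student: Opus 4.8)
The plan is to bootstrap from Theorem~\ref{allquart}. A general ternary quartic form $f$ is in particular nondegenerate, so $X_f$ is a smooth quartic surface in $\mathbb{P}^3$, and Theorem~\ref{allquart} already furnishes a $14$-dimensional family of simple rank-$2$ Ulrich bundles on $X_f$ with $c_1=3H$ and $c_2=14$. The content of Theorem~\ref{main} beyond Theorem~\ref{allquart} is therefore twofold: to upgrade \emph{simple} to \emph{stable}, and to transport the resulting family across van den Bergh's correspondence.

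For the first point, recall from Proposition~\ref{cliffgiesstab} that every Ulrich bundle on $X_f$ is Gieseker semistable with respect to $\OO_{X_f}(1)$, so it is enough to rule out strictly semistable rank-$2$ Ulrich bundles. If $\EE$ were such a bundle, then by Lemma~\ref{cliffdestab} it would be destabilized by an Ulrich subbundle; being a saturated locally free subsheaf of rank $1$, this subbundle would be an Ulrich line bundle on $X_f$. Thus everything reduces to showing that a general $X_f$ carries no Ulrich line bundle. Computing reduced Hilbert polynomials on the K3 surface $X_f$, an Ulrich line bundle $\OO_{X_f}(D)$ must satisfy $D\cdot H=6$ and $D^{2}=4$ together with the arithmetically Cohen--Macaulay vanishings of Proposition~\ref{ulrichacm} (equivalently $H^{\bullet}(X_f,\OO_{X_f}(D-H))=H^{\bullet}(X_f,\OO_{X_f}(D-2H))=0$). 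I would then bring in the explicit description of $\operatorname{Pic}(X_f)$ for general $f$ --- it has rank $8$, with a generating lattice dictated by the cyclic cover $X_f\to\mathbb{P}^2$ --- and analyze the effective cone of $X_f$ by way of Proposition~\ref{curvenef}, in order to check that no class $D$ with those intersection numbers can meet the required vanishings (for instance because the twist $D-H$ is forced to be effective, or because $D$ itself is not). This is the main obstacle: unlike the Picard-rank-$1$ situation treated by Beauville--Schreyer, here one must control the full effective cone of a rank-$8$ Picard lattice and exclude every candidate class, not merely the multiples of $H$.

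Granting the non-existence of an Ulrich line bundle, Proposition~\ref{cliffgiesstab} together with Lemma~\ref{cliffdestab} forces every rank-$2$ Ulrich bundle on $X_f$ to be Gieseker stable; in particular the whole $14$-dimensional family produced by Theorem~\ref{allquart} consists of stable bundles, and since the stable locus is open in the smooth moduli space of simple sheaves with Mukai vector $(2,3H,6)$, its dimension remains $14$. Finally I would invoke van den Bergh's correspondence: an $8$-dimensional matrix representation of $C_f$ is the same datum as an Ulrich bundle of rank $2$ on $X_f$ (here $d=4$, so $dr=8$ gives $r=2$, matching the Hilbert polynomial $8\binom{t+2}{2}$), and by Proposition~\ref{lem-irred-stable} the irreducible such representations are precisely those corresponding to stable Ulrich bundles. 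Since the correspondence is compatible with families, the $14$-dimensional family of stable rank-$2$ Ulrich bundles yields a $14$-dimensional family of $8$-dimensional irreducible representations of $C_f$, which is the assertion of Theorem~\ref{main}.
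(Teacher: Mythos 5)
Your proposal is correct and follows essentially the paper's own (second) proof: Ulrich bundles are semistable (Proposition~\ref{cliffgiesstab}), a destabilizing line subbundle would have to satisfy $D\cdot H=6$, $D^2=4$, and such classes are excluded by the effective-cone analysis of Lemma~\ref{K3eff} and Proposition~\ref{curvenef}, after which Proposition~\ref{lem-irred-stable} and van den Bergh's correspondence give the representations. The only cosmetic difference is that you route the reduction through Lemma~\ref{cliffdestab} (ruling out Ulrich line bundles), whereas Proposition~\ref{halfstab} in the paper excludes any destabilizing line bundle directly from the numerical conditions; the paper also records an alternative argument via Qin's simple-versus-stable criteria (Proposition~\ref{simpstab}), which you do not need.
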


This result is a consequence of the fact that the general simple Ulrich bundle on $X_f$ granted by Theorem \ref{allquart} is stable (Proposition \ref{stable}).  We offer two proofs of this, both of which depend on the rational polyhedral structure of the effective cone of $X_f.$  The first proof proceeds via Proposition \ref{simpstab}, which is based on results of Qin concerning the stability of rank-2 simple bundles on surfaces (Theorem 2 in \cite{Qin1} and Theorem A in \cite{Qin2}).  The second proof starts from the semistability of Ulrich bundles and rules out the existence of destabilizing line subbundles by direct calculation (Proposition \ref{halfstab}).       

The rest of this introduction is devoted to an outline of the proof of Theorem \ref{allquart}, which occupies the bulk of Section \ref{allproofs}.  We believe that the general technique is applicable to the construction of rank-2 Ulrich bundles on K3 surfaces that are not smooth quartics, as well as the construction of rank-2 Ulrich bundles on (special) smooth quartics whose first Chern class is not $3H.$   

We begin by sketching the link between vector bundles on a surface $X$ and the Brill-Noether theory of curves on $X.$  If $\mathcal{E}$ is a globally generated vector bundle of rank $2$ on $X,$ then a general choice of two global sections of $\mathcal{E}$ gives rise to an injective morphism $\mathcal{E}^{\vee} \rightarrow \mathcal{O}_{X}^2$ whose degeneracy locus is a smooth curve $C \in |\det{\mathcal{E}}|$ and which fits into an exact sequence
\begin{equation}
\label{firstelmod}
0 \rightarrow \mathcal{E}^{\vee} \rightarrow \mathcal{O}_{X}^{2} \rightarrow \mathcal{L} \rightarrow 0
\end{equation}
where $\mathcal{L}$ is a line bundle on $C.$  Since $\mathcal{L}$ is globally generated with degree  $c_2(\mathcal{E}),$ its isomorphism class is contained in the Brill-Noether locus $W^1_{c_2(\mathcal{E})}(C).$  Conversely, one can take a basepoint-free pencil associated to a line bundle $\mathcal{L}$ on $C$ and extend its evaluation morphism $\mathcal{O}_C^{2} \rightarrow \mathcal{L}$ by zero to a morphism $\mathcal{O}_X^{2} \rightarrow \mathcal{L}$ whose kernel is a rank-2 vector bundle on $X.$     

This construction has been used extensively to study the interplay between curves and vector bundles on K3 surfaces, starting with \cite{Laz}.  It plays a major role in Voisin's proof of the Green conjecture for generic curves (\cite{Voi1},\cite{Voi2}), and in the recent proof of the Green conjecture for curves on arbitrary K3 surfaces by Aprodu-Farkas \cite{AF}.   
 
One might hope to prove Theorem \ref{allquart} via the following strategy: produce a basepoint-free line bundle $\mathcal{L}$ of degree 14 on a smooth curve $C \in |\mathcal{O}_X(3)|$ and construct an exact sequence of the form (\ref{firstelmod}) where $\mathcal{E}$ is a simple Ulrich bundle.  To implement this strategy or anything like it, we must resolve two issues.  The first is that while there are plenty of line bundles of degree 14 on $C$ with two global sections, the standard results on Brill-Noether loci do not imply that any of them are basepoint-free.  The second is that even if the desired degree-14 line bundle exists on $C$ and the associated rank-2 vector bundle $\mathcal{E}$ on $X$ is simple, it is not necessarily true that $\mathcal{E}$ is Ulrich; see Section \ref{badbundles} for an example.    

Our path to resolving the first issue leads through a result of Aprodu-Farkas (Theorem 3.12 in \cite{AF}) yielding an upper bound of the dimension of certain Brill-Noether loci that allows us to conclude the general element of $W^1_{14}(C)$ is basepoint-free.  In order to apply this result to $C,$ we must know that the Clifford index of $C$ (see Definition \ref{cliffinddef}) is computed by a pencil of minimal degree, and that this minimal degree is sufficiently small relative to the genus of $C.$  

Since a smooth member of $|\mathcal{O}_X(3)|$ is a complete intersection of $X$ with a cubic surface, both of these properties are verified by results of Basili (Th\'{e}or\`{e}me 4.2 and 4.3 in \cite{Bas}).  With a bit more work, we can show that $W^1_{14}(C)$ admits a component $\mathcal{W}$ which is generically smooth of the expected dimension, so that the rank-2 vector bundle $\mathcal{E}$ on $X$ associated to a general $\mathcal{L} \in \mathcal{W}$ is simple; see Proposition \ref{lbexist} for details.    

The resolution of the second issue is somewhat more involved.  While a simple vector bundle $\mathcal{E}$ produced from a general line bundle $\mathcal{L} \in \mathcal{W}$ need not be Ulrich, Proposition \ref{weaklyequiv} implies that it is \textit{weakly Ulrich}.  Such bundles were introduced in \cite{ESW}, and while they are not necessarily ACM, they are a useful generalization of Ulrich bundles; see Definition \ref{weakdef} and the subsequent discussion, as well as Remark \ref{cfweak}.  

To clarify the obstruction to our weakly Ulrich bundles being Ulrich, we pass from the Brill-Noether locus $W^1_{14}(C)$ to the Hilbert scheme $X^{[14]}$ of length-14 subschemes of $X.$  While our bundles might not be globally generated, we may use the sequence (\ref{firstelmod}) to show that each of them may be expressed as a Serre extension
\begin{equation}
\label{zeroserre}
0 \rightarrow \mathcal{O}_X \rightarrow \mathcal{E} \rightarrow \mathcal{I}_{Z|X}(3) \rightarrow 0
\end{equation}
for some l.c.i. $Z \in X^{[14]}$ (Proposition \ref{weakserre}).  We then show that $\mathcal{E}$ is Ulrich precisely when $Z$ does not lie on a quadric (Proposition \ref{quadriculrich}).

Roughly speaking, as $\mathcal{E}$ varies in our family, the subschemes $Z$ which appear in (\ref{zeroserre}) sweep out a 21-dimensional locus in $X^{[14]}.$  Due to the fact that $\mathcal{E}$ is locally free, any such $Z$ is Cayley-Bacharach with respect to $\mathcal{O}_X(3).$  To show that the general $Z$ in our 21-dimensional locus does not lie on a quadric, we prove that locus of length-14 subschemes of $X$ which lie on a quadric and are also Cayley-Bacharach with respect to $\mathcal{O}_X(3)$ is of dimension at most 20.  All this is explained in more detail in the proofs of Propositions \ref{quadriccb} and \ref{bigdim}.

\bigskip

\textbf{Acknowledgments:} The second author was partially supported by the NSF grants DMS-0603684 and DMS-1004306. The third author was supported by the NSF grant RTG DMS-0502170.  We are grateful to M. Deland, I. Dolgachev, and D. Faenzi for useful discussions related to this work, and to R. Hartshorne for pointing us towards an error in a previous version of this manuscript.  We would also like to thank R. Lazarsfeld for his important role in forming our collaboration and for useful discussions.

\section{Preliminaries}

\subsection{Representations of Clifford Algebras}
\label{repcliff}
We give a summary of van den Bergh's correspondence.  Proofs will be omitted; we refer to van den Bergh's original article \cite{vdB} for more details.

For the rest of this section we fix a homogeneous form $f=f(x_{1}, \cdots ,x_{n})$ of degree $d \geq 2$ which is \textit{nondegenerate}, i.e. satisfies the property that $\frac{\partial{f}}{\partial{x_{i}}}(y_{1}, \cdots ,y_{n})=0$ for all $i$ if and only if $y_{j}=0$ for all $j.$

\begin{defn}
The \emph{Clifford algebra} $C_f$ of the form $f$ is the associative $k$-algebra $k\{x,y,z\}/I$, where $k\{x_1, \cdots ,x_n\}$ is the free associative $k-$algebra on $x_1, \cdots ,x_n$ and $I \subseteq k\{x_1, \cdots ,x_n\}$ is the two-sided ideal generated by elements of the form $(\alpha_1{x_1} + \cdots  + \alpha_n{x_n})^d-f(\alpha_1, \cdots ,\alpha_n)$ for $\alpha_1, \cdots, \alpha_n \in k$.
\end{defn}

\begin{defn}
Let $C_{f}$ be the Clifford algebra associated to $f.$
\begin{itemize}
\item[(i)]{A \textit{representation of $C_{f}$} is a $k-$algebra homomorphism $\phi: C_{f} \rightarrow \textnormal{Mat}_{m}(k).$  The positive integer $m$ is the \textit{dimension} of $\phi.$}
\item[(ii)]{Two $m-$dimensional representations $\phi_{1},\phi_{2}$ of $C_{f}$ are \textit{equivalent} if there exists an invertible $\theta \in \textnormal{Mat}_{m}(k)$ such that $\phi_{1}=\theta\phi_{2}\theta^{-1}.$}
\end{itemize}
\end{defn}

Observe that the data of an $m-$dimensional representation $\phi$ of $C_{f}$ consists of a collection $A_{1}, \cdots ,A_{n}$ of $m \times m$ matrices for which the identity
\begin{equation}
\label{linearisation}
(x_{1}A_{1} + \cdots + x_{n}A_{n})^{d}=f(x_{1}, \cdots ,x_{n})I_{m}
\end{equation}
holds in $\textnormal{Mat}_{m}(k[x_{1}, \cdots ,x_{n}]).$

Any study of representations begins with the study of irreducible representations, for which we now give an appropriate definition.

\begin{defn}
\label{irdef}
A representation $\phi:C_{f} \rightarrow \textnormal{Mat}_{m}(k)$ is \textit{irreducible} if its image $\phi(C_{f})$ generates $\textnormal{Mat}_{m}(k).$  Otherwise we say that $\phi$ is \textit{reducible}.
\end{defn}




Representations of Clifford algebras have been studied in \cite{BHS}, where they are referred to as \textit{Clifford modules.}  It is observed in \textit{loc.~cit.}~that $C_{f}$ admits a natural $\mathbb{Z}/d\mathbb{Z}$-grading, and that any representation of $C_{f}$ admits the structure of a $\mathbb{Z}/d\mathbb{Z}-$graded $C_{f}-$module.  In light of this, the following statement, which is an immediate consequence of Corollary 2 in \cite{vdB} or Proposition 1.1 in \cite{HT}, is very natural.

\begin{prop}
\label{divbyd}
The dimension of any representation of $C_{f}$ is equal to $rd$ for some positive integer $r.$ \hfill \qedsymbol
\end{prop}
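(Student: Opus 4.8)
The plan is to exploit the $\mathbb{Z}/d\mathbb{Z}$-grading on $C_f$ recalled just above. Let $\phi\colon C_f \to \textnormal{Mat}_m(k)$ be a representation and regard the underlying space $V = k^m$ as a $C_f$-module; by the result of \cite{BHS} quoted above, $V$ carries a compatible $\mathbb{Z}/d\mathbb{Z}$-grading $V = \bigoplus_{i \in \mathbb{Z}/d\mathbb{Z}} V_i$, in which each generator $x_j \in C_f$ lies in degree $1$ and hence acts by a map $\phi(x_j)\colon V_i \to V_{i+1}$ for every $i$.

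First I would arrange that $f$ has a nonzero $x_1^d$-coefficient. Since $C_f$ is literally unchanged (only the chosen generators change) under replacing $x_1,\dots,x_n$ by another basis of their linear span inside $C_f$, and since $f$ is not identically zero, a suitable linear change of coordinates lets us assume $c := f(1,0,\dots,0) \neq 0$. Taking $\alpha = (1,0,\dots,0)$ in the defining relations shows $x_1^d - c \in I$, so that $\phi(x_1)^d = c\,\mathrm{id}_V$.

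The main step is then purely linear-algebraic. Because $c \neq 0$, the operator $\phi(x_1)$ is invertible on $V$; in particular each restriction $\phi(x_1)\colon V_i \to V_{i+1}$ is injective, so $\dim V_i \le \dim V_{i+1}$ for all $i \in \mathbb{Z}/d\mathbb{Z}$. Running around the cyclic group forces $\dim V_0 = \dim V_1 = \cdots = \dim V_{d-1}$, whence $m = \dim V = d\cdot \dim V_0$; since $m \ge 1$ the integer $r := \dim V_0$ is positive, which is the claim.

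I do not anticipate a serious obstacle: the only genuine input is the gradability of an arbitrary representation, which is exactly what is quoted from \cite{BHS} (and is what underlies Corollary 2 of \cite{vdB} and Proposition 1.1 of \cite{HT}). As a self-contained alternative avoiding even this, one can take determinants in the matrix identity \eqref{linearisation} to get $\det\!\big(\sum_i x_i A_i\big)^{d} = f^{m}$ in the UFD $k[x_1,\dots,x_n]$; comparing the multiplicity of each irreducible factor $q$ of $f$ gives $d \mid m\cdot \mathrm{mult}_q(f)$, and nondegeneracy of $f$ prevents $f$ from being a proper power (a nonconstant form has nontrivial zeros when $n \ge 2$, forcing its partial derivatives to vanish there), so those multiplicities have gcd $1$ and hence $d \mid m$.
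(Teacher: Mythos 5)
Your argument is correct, and it is genuinely more self-contained than what the paper does: Proposition \ref{divbyd} is stated there with no proof at all, being quoted as an immediate consequence of Corollary 2 in \cite{vdB} or Proposition 1.1 in \cite{HT}. Your first route (grade the module, note that some degree-one element $u=\sum_i\alpha_i x_i$ with $f(\alpha)\neq 0$ acts invertibly since $\phi(u)^d=f(\alpha)\,\mathrm{id}$, and chase dimensions around the cyclic grading) is sound, but its only real content is the quoted fact from \cite{BHS} that an \emph{arbitrary} finite-dimensional representation can be graded compatibly; that gradability is not a formality for modules over a $\mathbb{Z}/d\mathbb{Z}$-graded algebra, so this version of the proof is exactly as strong as the citation it leans on --- much like the paper's own treatment. (Also, the preliminary change of basis is unnecessary: any $\alpha$ with $f(\alpha)\neq 0$ already gives an invertible degree-one operator.) Your second route is the one that really earns the statement: taking determinants in the identity \eqref{linearisation} gives $\det\bigl(\sum_i x_iA_i\bigr)^d=f^m$ in the UFD $k[x_1,\dots,x_n]$, so $d$ divides $m\cdot\mathrm{mult}_q(f)$ for every irreducible factor $q$ of $f$; and nondegeneracy indeed forces $f$ not to be a proper power (in fact it forces $f$ to be squarefree: if $g^2\mid f$ with $g$ a nonconstant form, a nontrivial zero of $g$, which exists since $n\geq 2$ and $k$ is algebraically closed, kills all partials of $f$), so the multiplicities have gcd $1$ and $d\mid m$. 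This determinant argument is elementary, uses nothing beyond \eqref{linearisation} and nondegeneracy, and is essentially the classical proof underlying the cited results; including it would make the proposition independent of \cite{vdB}, \cite{HT}, and the gradability statement.
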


We now describe how to associate to each $rd-$dimensional representation $\phi$ of $C_{f}$ a vector bundle of rank $r$ on the hypersurface $X_f \subseteq \mathbb{P}^{n}$ defined by $w^d=f(x_1,\ldots,x_n)$.  Define a $k-$algebra homomorphism 
\begin{equation}
\Phi: k[w,x_{1}, \cdots ,x_{n}] \rightarrow \textnormal{Mat}_{rd}(k[x_{1}, \cdots ,x_{n}])
\end{equation}
by setting $\Phi(x_{i})=x_{i} \cdot I_{rd}$ for $i=1, \cdots, n$ and $\Phi(w)=\sum_{i=1}^{n}x_{i} \cdot A_{i}.$  By (\ref{linearisation}) this descends to a homomorphism $\overline{\Phi}: S_{X_f} \rightarrow \textnormal{Mat}_{rd}(k[x_{1}, \cdots ,x_{n}]).$ where $S_{X_f}$ is the homogeneous coordinate ring of $X_f.$  This yields an $S_{X_f}-$module structure on $k[x_{1}, \cdots ,x_{n}]^{rd}.$

Since composing $\overline{\Phi}$ with the natural inclusion $k[x_{1}, \cdots ,x_{n}] \hookrightarrow S_{X_f}$ yields the natural $k[x_{1}, \cdots ,x_{n}]-$module structure on $k[x_{1}, \cdots ,x_{n}]^{rd},$ the geometric content of our discussion may be summarized as follows: the homomorphism $\overline{\Phi}$ yields an $\mathcal{O}_{X_f}-$module $\mathcal{E}$ for which $\pi_{\ast}\mathcal{E} \cong \mathcal{O}_{\mathbb{P}^{n-1}}^{rd},$ where $\pi:X_f \rightarrow \mathbb{P}^{n-1}$ is the projection which forgets the variable $w.$  It can be shown that the nondegeneracy hypothesis on $f$ implies that $\mathcal{E}$ is locally free. The main result of \cite{vdB} (Proposition 1 in \textit{loc.~cit.}) implies that this construction yields an essentially bijective correspondence.

\begin{prop}
\label{vdbcor}
There is a one-to-one correspondence between equivalence classes of $dr-$dimensional representations of $C_{f}$ and isomorphism classes of vector bundles $\mathcal{E}$ of rank $r$ on the hypersurface $X_f$ such that $\pi_{\ast}\mathcal{E} \cong \mathcal{O}_{\mathbb{P}^{n-1}}^{dr}.$ \hfill \qedsymbol
\end{prop}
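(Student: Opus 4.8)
The plan is to deduce the correspondence from the equivalence between $\mathcal{O}_{X_f}$-modules and $\pi_{\ast}\mathcal{O}_{X_f}$-modules afforded by the affine morphism $\pi$, and then to read off what such a module structure amounts to in terms of matrices.

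\textbf{Step 1: the structure of $\pi$.} Since $w$ satisfies the monic degree-$d$ equation $w^{d}=f$ over $k[x_{1},\dots,x_{n}]$, the projection $\pi\colon X_f\to\mathbb{P}^{n-1}$ (from the point $[1:0:\cdots:0]$) is finite and flat of degree $d$, with $\pi_{\ast}\mathcal{O}_{X_f}\cong\bigoplus_{i=0}^{d-1}\mathcal{O}_{\mathbb{P}^{n-1}}(-i)$, the $i$-th summand spanned by $w^{i}$. In particular $\pi_{\ast}\mathcal{O}_{X_f}$ is generated as an $\mathcal{O}_{\mathbb{P}^{n-1}}$-algebra by the single section $w\in H^{0}(\mathbb{P}^{n-1},(\pi_{\ast}\mathcal{O}_{X_f})(1))$, subject to the sole relation $w^{d}=f$ (with $f$ read as a section of $\mathcal{O}_{\mathbb{P}^{n-1}}(d)$). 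Being finite, $\pi$ is affine, so $\pi_{\ast}$ is an equivalence from quasi-coherent $\mathcal{O}_{X_f}$-modules to quasi-coherent $\pi_{\ast}\mathcal{O}_{X_f}$-modules on $\mathbb{P}^{n-1}$.

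\textbf{Step 2: decoding the module structure.} Under that equivalence, a coherent sheaf $\mathcal{E}$ on $X_f$ with $\pi_{\ast}\mathcal{E}\cong\mathcal{O}_{\mathbb{P}^{n-1}}^{dr}$ corresponds to the trivial bundle $\mathcal{O}^{dr}$ together with a $\pi_{\ast}\mathcal{O}_{X_f}$-module structure. By Step 1 and the projection formula (via $\mathcal{O}_{X_f}(1)=\pi^{\ast}\mathcal{O}_{\mathbb{P}^{n-1}}(1)$), such a structure is nothing but the ``multiplication by $w$'' morphism $B\colon\mathcal{O}^{dr}\to\mathcal{O}^{dr}(1)$ subject to $B^{d}=f\cdot\mathrm{id}$. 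Since $\textnormal{Hom}(\mathcal{O}^{dr},\mathcal{O}^{dr}(1))=\textnormal{Mat}_{dr}\bigl(\textstyle\bigoplus_{i}k x_{i}\bigr)$, giving $B$ is the same as giving matrices $A_{1},\dots,A_{n}\in\textnormal{Mat}_{dr}(k)$ with $B=\sum_{i}x_{i}A_{i}$, and the condition $B^{d}=f\cdot\mathrm{id}$ is precisely (\ref{linearisation}) --- that is, a $dr$-dimensional representation of $C_{f}$. Matching up the notions of equivalence: an isomorphism $\mathcal{E}_{1}\cong\mathcal{E}_{2}$ of such sheaves pushes forward to an $\mathcal{O}_{\mathbb{P}^{n-1}}$-linear automorphism of $\mathcal{O}^{dr}$ intertwining the two $w$-actions, which (as $H^{0}(\mathbb{P}^{n-1},\mathcal{O})=k$) is conjugation by some $\theta\in\textnormal{GL}_{dr}(k)$; conversely conjugation by $\theta$ manifestly produces such an isomorphism. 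Hence isomorphism classes of these sheaves are in bijection with equivalence classes of $dr$-dimensional representations of $C_{f}$.

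\textbf{Step 3 (the crux): local freeness and rank.} The only point that is not purely formal --- and the one I expect to be the main obstacle --- is that the coherent sheaf $\mathcal{E}$ attached to a representation is genuinely locally free of rank $r$; this is where the nondegeneracy of $f$ is used. The rank is immediate, since $\pi_{\ast}\mathcal{E}$ has rank $dr$ and $\deg\pi=d$. For local freeness, nondegeneracy forces $X_f\subseteq\mathbb{P}^{n}$ to be smooth: its singular locus is cut out by $w=0$ together with $\partial f/\partial x_{i}=0$ for all $i$, which is empty. Now fix $p\in X_f$ over $q\in\mathbb{P}^{n-1}$. Because $\mathfrak{m}_{q}\mathcal{O}_{X_f,p}$ is $\mathfrak{m}_{p}$-primary, $\textnormal{depth}_{\mathcal{O}_{X_f,p}}\mathcal{E}_{p}=\textnormal{depth}_{\mathcal{O}_{\mathbb{P}^{n-1},q}}\mathcal{E}_{p}$; moreover, after completion $(\pi_{\ast}\mathcal{E})_{q}$ is the direct sum of the modules $\mathcal{E}_{p'}$ over the finitely many $p'$ lying above $q$, and it is $\mathcal{O}_{\mathbb{P}^{n-1},q}$-free, hence of depth $n-1$. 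Therefore each $\mathcal{E}_{p}$ has depth $n-1$ over $\mathcal{O}_{\mathbb{P}^{n-1},q}$, and so over $\mathcal{O}_{X_f,p}$; since $\mathcal{O}_{X_f,p}$ is regular of dimension $n-1$, the Auslander--Buchsbaum formula gives $\textnormal{pd}_{\mathcal{O}_{X_f,p}}\mathcal{E}_{p}=0$, i.e.\ $\mathcal{E}_{p}$ is free. (Equivalently: the module $k[x_{1},\dots,x_{n}]^{dr}$ is maximal Cohen--Macaulay over $S_{X_f}$, and on a smooth variety MCM sheaves are locally free.) Finally the converse is then automatic: any rank-$r$ vector bundle $\mathcal{E}$ on $X_f$ with $\pi_{\ast}\mathcal{E}\cong\mathcal{O}^{dr}$ yields, after fixing a trivialization, matrices $A_{i}$ from the $w$-action as in Step 2, and the two constructions are mutually inverse by construction. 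Combining Steps 1--3 gives the asserted one-to-one correspondence.
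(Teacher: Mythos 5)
Your argument is correct, and it is worth noting that the paper itself does not prove this statement: it sketches the passage from a representation to the module $k[x_1,\dots,x_n]^{dr}$ over the coordinate ring $S_{X_f}$ via the homomorphism $\overline{\Phi}$, asserts without proof that nondegeneracy forces local freeness, and defers the bijectivity to Proposition 1 of van den Bergh's article. Your proposal supplies exactly the material that is outsourced there. The construction in Steps 1--2 is the sheafified version of the paper's ring-theoretic one (the finite flat degree-$d$ projection with $\pi_{\ast}\mathcal{O}_{X_f}\cong\bigoplus_{i=0}^{d-1}\mathcal{O}(-i)$, the $w$-action $B\colon\mathcal{O}^{dr}\to\mathcal{O}^{dr}(1)$ with $B^{d}=f\cdot\mathrm{id}$ recovering relation (\ref{linearisation}), and $H^{0}(\mathcal{O}_{\mathbb{P}^{n-1}})=k$ matching isomorphism of sheaves with conjugation of representations), so the two routes are really the same correspondence written in different languages; what your version buys is a self-contained proof, in particular of the one genuinely non-formal point, local freeness, which you correctly reduce to smoothness of $X_f$ (nondegeneracy of $f$ plus the Euler relation in characteristic zero) and then settle by the depth/Auslander--Buchsbaum argument, equivalently by the fact that an MCM sheaf on a smooth variety is locally free. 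Two small points to be aware of if you write this up in full: the decomposition of $(\pi_{\ast}\mathcal{E})_q$ as a direct sum over the points above $q$ is only valid after completion or henselization (as you indicate), with the depth comparison then running through the standard equality of depth over $\mathcal{O}_{\mathbb{P}^{n-1},q}$ and over $\mathcal{O}_{X_f,p}$ for finite maps; and in Step 2 one should say explicitly that $\pi_{\ast}\mathcal{O}_{X_f}$ is the quotient of $\mathrm{Sym}(\mathcal{O}_{\mathbb{P}^{n-1}}(-1))$ by the relation $w^{d}=f$, so that an action of the single twisted generator $w$ satisfying $B^{d}=f\cdot\mathrm{id}$ really determines the full algebra action.
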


\subsection{Ulrich Bundles}
\label{ub}
As mentioned in the introduction, the vector bundles that occur on the geometric side of van den Bergh's correspondence are ubiquitous enough to justify the following definition.

\begin{defn}
A vector bundle $\mathcal{E}$ of rank $r$ on $X$ is \textit{Ulrich} if for some (resp. all) linear projections $\pi:X \rightarrow \mathbb{P}^{n-1},$ we have that $\pi_{\ast}\mathcal{E} \cong \mathcal{O}_{\mathbb{P}^{n-1}}^{dr}.$
\end{defn}

The following characterization of Ulrich bundles plays an important role in the sequel.  It is a special case of Proposition 2.1 in \cite{ESW} which we state without proof. 
\begin{prop}
\label{ulrichacm}
Let $\mathcal{E}$ be a vector bundle $\mathcal{E}$ of rank $r \geq 1$ on $X.$  Then the following are equivalent:
\begin{itemize}
\item[(i)]{$\mathcal{E}$ is Ulrich.}
\item[(ii)]{$\mathcal{E}$ is ACM and its Hilbert polynomial is $dr\binom{t+n-1}{n-1}.$}
\item[(iii)]{The $\mathcal{O}_{\mathbb{P}^{n}}-$module $\mathcal{E}$ admits a minimal graded free resolution of the form
\begin{equation}
0 \longrightarrow \mathcal{O}_{\mathbb{P}^{n}}(-1)^{dr} \longrightarrow \mathcal{O}_{\mathbb{P}^{n}}^{dr} \longrightarrow \mathcal{E} \longrightarrow 0.
\end{equation}  \hfill \qedsymbol}
\end{itemize}
\end{prop}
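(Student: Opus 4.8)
The plan is to route all four statements through the finite flat morphism $\pi\colon X\to\mathbb{P}^{n-1}$ of degree $d$. Choose coordinates $x_{0},\dots,x_{n}$ on $\mathbb{P}^{n}$ so that $\pi$ is the projection forgetting $x_{n}$ (its center $[0:\cdots:0:1]$ is not on $X$, since $\pi$ is a morphism), and write $S=k[x_{0},\dots,x_{n}]$, $S'=k[x_{0},\dots,x_{n-1}]\subseteq S$ for the homogeneous coordinate rings of $\mathbb{P}^{n}$ and $\mathbb{P}^{n-1}$, with $S$ surjecting onto $S_{X}$. Since $\pi$ is finite we have $R^{q}\pi_{*}=0$ for $q>0$, and since $x_{0},\dots,x_{n-1}$ have no common zero on $X$ we have $\mathcal{O}_{X}(1)\cong\pi^{*}\mathcal{O}_{\mathbb{P}^{n-1}}(1)$; the projection formula then gives a natural isomorphism $H^{i}\!\big(X,\mathcal{E}(m)\big)\cong H^{i}\!\big(\mathbb{P}^{n-1},(\pi_{*}\mathcal{E})(m)\big)$ for all $i,m$, where $\pi_{*}\mathcal{E}$ is a vector bundle of rank $dr$ on $\mathbb{P}^{n-1}$ and $M:=\bigoplus_{m}H^{0}(X,\mathcal{E}(m))$ is an $S_{X}$-module that we may also view over the subring $S'$. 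Since (ii) and (iii) make no reference to $\pi$, once they are shown equivalent to (i) the ``some $\pi$ / all $\pi$'' clause in the definition of Ulrich becomes automatic; I shall also use the standard identity $\pi_{*}\mathcal{O}_{X}\cong\bigoplus_{j=0}^{d-1}\mathcal{O}_{\mathbb{P}^{n-1}}(-j)$.

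To prove (i)$\Rightarrow$(ii) and (i)$\Rightarrow$(iii), assume $\pi_{*}\mathcal{E}\cong\mathcal{O}_{\mathbb{P}^{n-1}}^{dr}$. The isomorphism above identifies $H^{i}(X,\mathcal{E}(m))$ with $H^{i}(\mathbb{P}^{n-1},\mathcal{O}(m))^{dr}$ for every $i,m$; this annihilates the intermediate cohomology $H^{i}_{*}(\mathcal{E})$ for $0<i<n-1$, so $\mathcal{E}$ is ACM, and gives $\chi(\mathcal{E}(m))=dr\binom{m+n-1}{n-1}$, which is (ii). Moreover $M\cong (S')^{dr}$ as a graded $S'$-module, so $M$ is generated in degree $0$ by $dr$ elements and $M_{m}=0$ for $m<0$; hence the minimal graded free resolution of $M$ over $S$ begins $0\to F_{1}\to S^{dr}\to M\to 0$. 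Since $\mathcal{E}$ is ACM, $M$ is maximal Cohen--Macaulay over $S_{X}$, so $\mathrm{pd}_{S}M=1$ by Auslander--Buchsbaum and $F_{1}=\bigoplus_{j}S(-b_{j})$ is free, with all $b_{j}\ge 1$ by minimality. Comparing the resulting Hilbert series $\mathrm{HS}(M)=\big(dr-\sum_{j}t^{b_{j}}\big)\big/(1-t)^{n+1}$ with the value $\mathrm{HS}(M)=\sum_{m\ge0}dr\binom{m+n-1}{n-1}t^{m}=dr\,(1-t)^{-n}$ read off above forces $\sum_{j}t^{b_{j}}=dr\,t$, i.e.\ exactly $dr$ relations, all linear; this is (iii).

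For the converse directions: (iii)$\Rightarrow$(ii) is immediate on twisting $0\to\mathcal{O}_{\mathbb{P}^{n}}(-1)^{dr}\to\mathcal{O}_{\mathbb{P}^{n}}^{dr}\to\mathcal{E}\to 0$ by $\mathcal{O}(m)$ and taking cohomology on $\mathbb{P}^{n}$, which kills $H^{i}_{*}(\mathcal{E})$ for $0<i<n-1$ and yields $\chi(\mathcal{E}(m))=dr\big[\binom{m+n}{n}-\binom{m+n-1}{n}\big]=dr\binom{m+n-1}{n-1}$. For (ii)$\Rightarrow$(i), the ACM hypothesis transported through $\pi$ says $\pi_{*}\mathcal{E}$ has no intermediate cohomology on $\mathbb{P}^{n-1}$, so by Horrocks' splitting criterion $\pi_{*}\mathcal{E}\cong\bigoplus_{s=1}^{dr}\mathcal{O}_{\mathbb{P}^{n-1}}(c_{s})$; the identity of polynomials $\sum_{s}\chi\big(\mathcal{O}_{\mathbb{P}^{n-1}}(m+c_{s})\big)=dr\,\chi\big(\mathcal{O}_{\mathbb{P}^{n-1}}(m)\big)$ then forces $\sum_{s}c_{s}=0$ (next-to-leading coefficient) and $\sum_{s}c_{s}^{2}=0$ (the following coefficient, using $\sum_{s}c_{s}=0$), whence all $c_{s}=0$ and $\pi_{*}\mathcal{E}\cong\mathcal{O}_{\mathbb{P}^{n-1}}^{dr}$, i.e.\ (i).

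I expect (ii)$\Rightarrow$(i) — equivalently, extracting the \emph{linearity} of the resolution from the bare data ``ACM plus the right Hilbert polynomial'' — to be the main obstacle. The point is that the Hilbert polynomial by itself does not determine the Hilbert function of an ACM module, so one really has to bring $\pi$ into play (or, equivalently, first establish the vanishing $H^{\bullet}(\mathcal{E}(-p))=0$ for $1\le p\le n-1$); and the binomial-coefficient comparison above only acquires content at order $m^{n-3}$, so it is genuinely using $\dim X\ge 2$, which is the case we need ($X$ a quartic surface). Everything else reduces to routine bookkeeping together with standard input — Auslander--Buchsbaum, Horrocks' splitting criterion, Castelnuovo--Mumford regularity — and the elementary geometry of the finite flat cover $\pi$.
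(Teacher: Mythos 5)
Your argument is essentially correct in the range the paper actually needs, and it is worth noting that the paper offers no proof of Proposition \ref{ulrichacm} at all: it is quoted as a special case of Proposition 2.1 of \cite{ESW}, where the equivalences are organized around linear resolutions and cohomology vanishing conditions rather than the Hilbert polynomial. Your route is a clean self-contained substitute: pushing everything down the finite flat degree-$d$ cover $\pi$ (with $\mathcal{O}_X(1)\cong\pi^{*}\mathcal{O}_{\mathbb{P}^{n-1}}(1)$ and $H^{i}(X,\mathcal{E}(m))\cong H^{i}(\mathbb{P}^{n-1},(\pi_{*}\mathcal{E})(m))$), deducing (i)$\Rightarrow$(iii) from $\Gamma_{*}(\mathcal{E})\cong (S')^{dr}$ together with Auslander--Buchsbaum (projective dimension $1$ over $S$) and a Hilbert-series comparison forcing all syzygy twists to equal $1$, and deducing (ii)$\Rightarrow$(i) from Horrocks splitting of the rank-$dr$ bundle $\pi_{*}\mathcal{E}$ followed by comparison of the coefficients of $m^{n-2}$ and $m^{n-3}$, which gives $\sum_{s}c_{s}=\sum_{s}c_{s}^{2}=0$ and hence triviality. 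Each of these steps checks out, including the observation that the ``some $\pi$ / all $\pi$'' clause comes for free once (i) is equated with the $\pi$-independent conditions (ii) and (iii).

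Your instinct that (ii)$\Rightarrow$(i) is the delicate implication, and that your coefficient extraction genuinely needs $n\geq 3$, deserves to be sharpened: the restriction is not an artifact of your method, because for $n=2$ the implication as literally stated is false. On a smooth plane quartic $C$ take $\mathcal{L}=\mathcal{O}_{C}(1)(p+q)$ for points $p,q\in C$: its Hilbert polynomial is $\chi(\mathcal{L}(t))=4t+4=dr\binom{t+n-1}{n-1}$ with $d=4$, $r=1$, $n=2$, and it is automatically ACM (for a bundle on a smooth plane curve the intermediate-cohomology condition is vacuous and $\Gamma_{*}$ has depth two), yet $h^{0}(\mathcal{L}(-1))=h^{0}(\mathcal{O}_{C}(p+q))\neq 0$, whereas an Ulrich bundle would satisfy $h^{0}(\mathcal{L}(-1))=h^{0}(\mathcal{O}_{\mathbb{P}^{1}}(-1)^{4})=0$. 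So condition (ii) must either be read with the Hilbert function in place of the Hilbert polynomial (equivalently, one must add $h^{0}(\mathcal{E}(-1))=0$), or the proposition must be restricted to $n\geq 3$. Since the paper invokes the proposition only for quartic surfaces ($n=3$), your proof covers everything that is actually used, but a complete write-up should state this hypothesis explicitly rather than leave it as a remark at the end.
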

In particular, $X$ is linear determinantal if and only if $X$ admits an Ulrich line bundle.  As an immediate consequence of (iii), we obtain
\begin{cor}
\label{ggsect}
Any Ulrich bundle of rank $r$ on $X$ is globally generated and has $dr$ global sections. \hfill \qedsymbol
\end{cor}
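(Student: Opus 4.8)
The plan is to read both assertions straight off the minimal graded free resolution furnished by part~(iii) of Proposition~\ref{ulrichacm}; the argument should amount to little more than a diagram chase, so I do not expect a genuine obstacle.

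First I would set notation: let $i\colon X\hookrightarrow\mathbb{P}^{n}$ be the inclusion and write the resolution of $\mathcal{E}$, viewed as a coherent $\mathcal{O}_{\mathbb{P}^{n}}$-module, as
\[
0\longrightarrow\mathcal{O}_{\mathbb{P}^{n}}(-1)^{dr}\longrightarrow\mathcal{O}_{\mathbb{P}^{n}}^{dr}\stackrel{\varepsilon}{\longrightarrow}\mathcal{E}\longrightarrow 0.
\]
Because $i$ is a closed immersion we have $H^{0}(\mathbb{P}^{n},\mathcal{E})=H^{0}(X,\mathcal{E})$, and a surjection of $\mathcal{O}_{\mathbb{P}^{n}}$-modules onto $\mathcal{E}$ pulls back along $i$ (via the counit $i^{\ast}i_{\ast}\mathcal{E}\to\mathcal{E}$, which is surjective for a closed immersion) to a surjection of $\mathcal{O}_{X}$-modules onto $\mathcal{E}$. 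Hence it suffices to establish both claims for $\mathcal{E}$ regarded as an $\mathcal{O}_{\mathbb{P}^{n}}$-module.

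Global generation is then immediate, since $\varepsilon$ already displays $\mathcal{E}$ as a quotient of $\mathcal{O}_{\mathbb{P}^{n}}^{dr}$, and the $dr$ images of the standard basis are global sections. For the count of sections I would pass to the long exact cohomology sequence of the resolution; using $H^{0}(\mathbb{P}^{n},\mathcal{O}_{\mathbb{P}^{n}}(-1))=H^{1}(\mathbb{P}^{n},\mathcal{O}_{\mathbb{P}^{n}}(-1))=0$, the map on $H^{0}$ induced by $\varepsilon$ is an isomorphism, so $h^{0}(\mathcal{E})=h^{0}(\mathcal{O}_{\mathbb{P}^{n}}^{dr})=dr$.

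The only point deserving a sentence is the comparison between $\mathcal{E}$ on $\mathbb{P}^{n}$ and on $X$, which is standard; alternatively one could argue directly from the definition of Ulrich bundle together with the projection formula, but invoking~(iii) keeps the section count cleanest.
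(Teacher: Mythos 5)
Your argument is correct and is precisely the one the paper intends: the corollary is stated as an immediate consequence of the resolution in part (iii) of Proposition~\ref{ulrichacm}, and reading off global generation from the surjection $\mathcal{O}_{\mathbb{P}^{n}}^{dr}\twoheadrightarrow\mathcal{E}$ together with the section count from $H^{0}(\mathcal{O}_{\mathbb{P}^{n}}(-1))=H^{1}(\mathcal{O}_{\mathbb{P}^{n}}(-1))=0$ is exactly the intended (and omitted) proof. Your remark on transferring global generation across the closed immersion via $H^{0}(\mathbb{P}^{n},i_{\ast}\mathcal{E})=H^{0}(X,\mathcal{E})$ is a standard point handled correctly.
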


Since most hypersurfaces do not admit linear determinantal representations, one can ask for the ``next best thing," namely a linear Pfaffian representation.  The following result, which allows us to deduce Theorem \ref{allpfaff} from Theorem \ref{allquart}, is a rephrasing of Corollary in \cite{Bea} which is suitable for our purposes.

\begin{prop}
\label{beapfaff}
The following conditions are equivalent:
\begin{itemize}
\item[(i)]{$X$ is the zerolocus of the Pfaffian of a $(2d) \times (2d)$ skew-symmetric matrix of linear forms.}
\item[(ii)]{There exists a rank-2 Ulrich bundle on $X$ with determinant $\mathcal{O}_{X}(d-1).$ \hfill \qedsymbol}
\end{itemize} 
\end{prop}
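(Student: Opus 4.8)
The plan is to follow Beauville's treatment of linear Pfaffian representations (\cite{Bea}, \S 2), which blends the Eisenbud--Popescu--Walter package \cite{ESW} with the linear-algebraic fact that a suitably self-dual matrix of linear forms can be replaced by a skew-symmetric one; I treat the two implications separately. For $(ii)\Rightarrow(i)$, I would start with a rank-$2$ Ulrich bundle $\mathcal{E}$ on $X$ with $\det\mathcal{E}\cong\mathcal{O}_X(d-1)$ and use Proposition \ref{ulrichacm}(iii) to write a minimal graded free resolution
\[
0 \longrightarrow \mathcal{O}_{\mathbb{P}^n}(-1)^{2d} \stackrel{M}{\longrightarrow} \mathcal{O}_{\mathbb{P}^n}^{2d} \longrightarrow \mathcal{E} \longrightarrow 0,
\]
with $M$ a $2d \times 2d$ matrix of linear forms; comparing the $0$th Fitting ideal of $\mathcal{E}$, which equals $(\det M)$, with the local structure of $\mathcal{E}$ as a rank-$2$ bundle on the degree-$d$ hypersurface $X = \{f = 0\}$ shows $\det M$ is a nonzero scalar multiple of $f^2$. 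Dualizing the resolution over $\mathcal{O}_{\mathbb{P}^n}$ and using $\mathcal{E}xt^1_{\mathcal{O}_{\mathbb{P}^n}}(\mathcal{O}_X, \mathcal{O}_{\mathbb{P}^n}) \cong \mathcal{O}_X(d)$ produces, after a twist by $-1$, a second minimal resolution of $\mathcal{E}^\vee(d-1)$ with map $M^T$. The alternating pairing $\mathcal{E} \otimes \mathcal{E} \to \wedge^2 \mathcal{E} = \mathcal{O}_X(d-1)$ gives an isomorphism $\varphi \colon \mathcal{E} \xrightarrow{\sim} \mathcal{E}^\vee(d-1)$ with $\varphi^\vee(d-1) = -\varphi$; lifting $\varphi$ to a comparison square $M^T b = a M$ of the two resolutions, and using that these resolutions are minimal of length one (so that there are no nonzero chain homotopies and the lift is unique), a comparison with the lift of $-\varphi^\vee(d-1)$ forces $a = -b^T$. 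Then $N := M^T b$ is a skew-symmetric $2d \times 2d$ matrix of linear forms with cokernel still $\mathcal{E}$ and with $\det N = \mathrm{Pf}(N)^2$ a nonzero scalar multiple of $f^2$; since $\mathrm{Pf}(N)$ is a degree-$d$ form whose square is a nonzero multiple of $f^2$, it is a nonzero scalar multiple of $f$, so $\{\mathrm{Pf}(N) = 0\} = X$.

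For $(i)\Rightarrow(ii)$, given a skew-symmetric $2d \times 2d$ matrix $N$ of linear forms with $\{\mathrm{Pf}(N) = 0\} = X$, I would set $\mathcal{E} := \mathrm{coker}\bigl(N \colon \mathcal{O}_{\mathbb{P}^n}(-1)^{2d} \to \mathcal{O}_{\mathbb{P}^n}^{2d}\bigr)$. Since $\det N = \mathrm{Pf}(N)^2$ is a scalar multiple of $f^2$, the sheaf $\mathcal{E}$ is supported on $X$; the two-term free resolution of $\mathcal{E}$ makes it ACM with Hilbert polynomial $2d\binom{t + n - 1}{n - 1}$, and the identity $N^T = -N$ yields $\mathcal{E}xt^1_{\mathcal{O}_{\mathbb{P}^n}}(\mathcal{E}, \mathcal{O}_{\mathbb{P}^n})(-1) \cong \mathcal{E}$, equivalently $\mathcal{E}^\vee(d-1) \cong \mathcal{E}$. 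A chain-rule computation relating the partials of $\mathrm{Pf}(N)$ to its maximal sub-Pfaffians shows that the corank of $N$ is exactly $2$ along the smooth locus of $X$ (which is all of $X$ in the cases of interest), so $\mathcal{E}$ is locally free of rank $2$; by Proposition \ref{ulrichacm} it is then Ulrich, and taking determinants in $\mathcal{E}^\vee(d-1) \cong \mathcal{E}$ gives $(\det\mathcal{E})^2 \cong \mathcal{O}_X(2(d-1))$, hence $\det\mathcal{E} \cong \mathcal{O}_X(d-1)$ when $\mathrm{Pic}(X)$ is torsion-free (as it is for the K3 surfaces we care about).

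I expect the crux to be the sign-chasing in $(ii)\Rightarrow(i)$ that distinguishes \emph{skew-}symmetric from merely symmetric $M$: this is exactly where the alternating nature of $\wedge^2\mathcal{E}$ (as opposed to a symmetric pairing) is used, and it is what upgrades ``$\det$ cuts out $X$'' to ``$\mathrm{Pf}$ cuts out $X$.'' A secondary technical point is the local freeness of the cokernel in the reverse direction, which is automatic for smooth $X$ but in general requires the usual genericity (``no excess corank'') hypothesis. In both places I would simply invoke the corresponding statements of \cite{Bea} and \cite{ESW} rather than reproduce their proofs.
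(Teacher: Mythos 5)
Your proposal is correct, but note that the paper offers no proof of this proposition at all: it is stated as a rephrasing of a corollary of Beauville \cite{Bea} (equivalently Theorem 0.3 of \cite{ESW}) and simply cited, so your sketch --- skew-symmetrizing the minimal linear resolution of Proposition \ref{ulrichacm}(iii) via the alternating isomorphism $\mathcal{E} \cong \mathcal{E}^{\vee}(d-1)$ and the uniqueness of lifts, and conversely taking the cokernel of the skew matrix and checking corank, ACM-ness and the Hilbert polynomial --- is precisely a reconstruction of the cited argument rather than an alternative to anything in the paper. In substance this is the same approach the paper takes (deferring the technical points to \cite{Bea} and \cite{ESW}), and your handling of the two delicate steps (the sign-chase forcing skew-symmetry, and $\det\mathcal{E} \cong \mathcal{O}_X(d-1)$ via torsion-freeness of $\mathrm{Pic}(X)$ for smooth hypersurfaces) is adequate in the generality the paper needs.
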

Note that condition (i) is satisfied if $X$ is linear determinantal.  Indeed, if $M$ is a $d \times d$ matrix of linear forms, we have the equation
\medskip
\begin{small}
\[ \det{M} = {\rm Pf}\left[ \begin{array}{cc}
0 & M \\
-M^{T} &0  \end{array} \right]\] 
\end{small}
\subsection{Weakly Ulrich Bundles}
\label{wub}
In general, it is difficult to produce Ulrich bundles directly.  As a first approximation one can try to produce bundles that are "almost" Ulrich in the following sense.

\begin{defn}
\label{weakdef}
A vector bundle $\mathcal{E}$ on $X$ is called \textit{weakly Ulrich} if $H^j(\mathcal{E}(-m))=0$ for $1 \leq j \leq n-1, m \leq j-1$ and $0 \leq j \leq n-2, m \geq j+2.$
\end{defn}

The importance of this notion stems from the fact that the Beilinson monad of $\mathcal{E}$ (viewed as a coherent sheaf on $\mathbb{P}^n$) reduces to a two-term complex if and only if $\mathcal{E}$ is weakly Ulrich (see Section 2 of \cite{ESW}).  Moreover, the sole nonzero morphism in this two-term complex is a matrix of linear forms if and only if $\mathcal{E}$ is Ulrich.   

The next proposition is an immediate consequence of the fact that the vanishing of cohomology groups is an open condition on families of coherent sheaves. 

\begin{prop}
\label{opencond}
Let $M$ be a family of isomorphism classes of vector bundles on $X.$  Then the locus in $M$ parametrizing Ulrich (resp. weakly Ulrich) bundles is a Zariski-open subset of $M$ (possibly empty). \hfill \qedsymbol
\end{prop}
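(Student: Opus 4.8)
The plan is to exhibit $M$ as the base of a flat family of sheaves and to reduce each of the two conditions to the simultaneous vanishing of finitely many cohomology groups on the fibres, after which semicontinuity yields openness at once. So I would assume $M$ carries a coherent sheaf $\EE$ on $X \times M$, flat over $M$, whose restriction to each fibre $X_s := X \times \{s\}$ is the vector bundle parametrized by $s$; passing to an \'etale cover of $M$ to arrange this changes nothing, since openness is \'etale-local on $M$. Writing $p \colon X \times M \to X$ for the first projection and $\EE(t) := \EE \otimes p^{\ast}\OO_X(t)$, I set $\EE_s(t) := \EE(t)|_{X_s}$.

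The heart of the matter is that, for each $s$, the condition ``$\EE_s$ is weakly Ulrich'' (resp. ``$\EE_s$ is Ulrich'') is equivalent to a \emph{finite} list of vanishings $H^i(X_s, \EE_s(t)) = 0$, the list being independent of $s$, together with (in the Ulrich case) the requirement that the Hilbert polynomial of $\EE_s$ equal $dr\binom{t+n-1}{n-1}$. For ``weakly Ulrich'' this is essentially Definition \ref{weakdef}, once one discards its infinitely many trivially-satisfied twists: a vector bundle on the projective variety $X$ has $H^j(\EE_s(k)) = 0$ for $j \geq 1$ and $k \gg 0$ (Serre vanishing), $H^0(\EE_s(k)) = 0$ for $k \ll 0$, and $H^j(\EE_s(k)) = 0$ for $1 \leq j \leq \dim X - 1$ and $k \ll 0$ (Serre duality together with Serre vanishing applied to $\EE_s^{\vee} \otimes \omega_X$), so only finitely many twists $m$ survive in each clause of the definition. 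For ``Ulrich'' I would invoke Proposition \ref{ulrichacm}(ii): $\EE_s$ is Ulrich iff it is ACM --- i.e. $H^i(\EE_s(t)) = 0$ for $0 < i < n-1$ and all $t$, which again reduces to finitely many $t$ by the same vanishings --- and has Hilbert polynomial $dr\binom{t+n-1}{n-1}$. That the resulting finite list of pairs $(i,t)$ can be chosen uniformly in $s$ follows from the \emph{uniform} versions of Serre vanishing and Serre duality for the projective morphism $X \times M \to M$ and the relatively ample $\OO_X(1)$, which is standard over the Noetherian base $M$ (cf. EGA III); this is the one technical point hidden behind the word ``immediate'' in the statement.

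With the reduction in hand, the semicontinuity theorem for cohomology tells us that $s \mapsto \dim_{k(s)} H^i(X_s, \EE_s(t))$ is upper semicontinuous on $M$ for every pair $(i,t)$, so each locus $\{\, s \in M : H^i(X_s, \EE_s(t)) = 0 \,\}$ is Zariski open. Moreover the Hilbert polynomial of $\EE_s$ is locally constant in $s$ by flatness, so the locus where it equals $dr\binom{t+n-1}{n-1}$ is open and closed in $M$. Hence the weakly Ulrich locus is a finite intersection of Zariski-open subsets, and the Ulrich locus is a finite intersection of Zariski-open subsets with an open-and-closed subset; both are Zariski-open in $M$, and either may be empty. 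The only real obstacle is pinning down the uniform finite list of cohomological conditions; the rest of the argument is formal.
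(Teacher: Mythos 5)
Your argument is exactly the paper's: the authors dispose of this proposition in one line, citing the fact that vanishing of cohomology is an open condition in flat families, which is precisely the semicontinuity argument you spell out. Your additional care about reducing the (a priori infinite) lists of vanishings in Definition \ref{weakdef} and in the ACM condition of Proposition \ref{ulrichacm} to a uniform finite list over the family is a correct filling-in of the detail the paper leaves implicit, not a different route.
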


\subsection{Semistability, Simplicity, and Moduli}
\label{sandm}

We begin by recalling some basic definitions.

\begin{defn}
\label{giesstabdef}
If $\mathcal{G}$ is a torsion-free sheaf on $X$ of rank $r,$ the \textit{reduced Hilbert polynomial of} $\mathcal{G}$ is $p(\mathcal{G}):=\frac{1}{\textnormal{rk}(\mathcal{G})} \cdot H_{\mathcal{G}}(t),$ where $H_{\mathcal{G}}(t)$ is the Hilbert polynomial of $\mathcal{G}.$
\end{defn}

\begin{defn}
A torsion-free sheaf $\mathcal{E}$ of rank $r$ on $X$ is \textit{semistable} (resp. \textit{stable}) if for every subsheaf $\mathcal{F}$ of $\mathcal{E}$ for which $0 < \textnormal{rank}(\mathcal{F}) < r$ we have that (w.r.t. lexicographical order)
\begin{equation}
p(\mathcal{F}) \leq p(\mathcal{E}) \hskip10pt \textnormal{  (resp. }p(\mathcal{F}) < p(\mathcal{E})).
\end{equation}
\end{defn}

The proofs of the following results may be found in \cite{CKM2}.

\begin{prop}
\label{cliffgiesstab}
Let $\mathcal{E}$ be an Ulrich bundle of rank $r \geq 1$ on $X.$  Then $\mathcal{E}$ is semistable. \hfill \qedsymbol
\end{prop}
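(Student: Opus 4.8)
The goal is to show that an Ulrich bundle $\mathcal{E}$ of rank $r \geq 1$ on $X$ is semistable (Gieseker sense). The plan is to reduce to a statement about subsheaves of $\mathcal{E}$ versus their images in the Beilinson/resolution picture, exploiting the very rigid linear resolution granted by Proposition \ref{ulrichacm}(iii). First I would note that $\mathcal{E}$, being Ulrich, has reduced Hilbert polynomial $p(\mathcal{E}) = d\binom{t+n-1}{n-1}$, and more importantly satisfies the sharp cohomology vanishing $H^0(\mathcal{E}(-1)) = 0$ (from the resolution $0 \to \mathcal{O}_{\mathbb{P}^n}(-1)^{dr} \to \mathcal{O}_{\mathbb{P}^n}^{dr} \to \mathcal{E} \to 0$, the twist by $\mathcal{O}(-1)$ kills all global sections). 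The key observation is that this vanishing is inherited by any subsheaf: if $\mathcal{F} \subseteq \mathcal{E}$ then $H^0(\mathcal{F}(-1)) \subseteq H^0(\mathcal{E}(-1)) = 0$.

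Next I would quote (or reprove) the standard fact, essentially due to Eisenbud–Schreyer–Weyman, that for a torsion-free sheaf $\mathcal{F}$ on $X \subseteq \mathbb{P}^n$ of rank $s$ one always has the inequality $H^0(\mathcal{F}) \leq d s + (\textnormal{correction terms from }H^0(\mathcal{F}(-1)))$; more precisely, the Hilbert polynomial of a torsion-free sheaf with $H^0(\mathcal{F}(-1)) = 0$ is bounded above, coefficient by coefficient in the binomial basis, by $ds\binom{t+n-1}{n-1}$. The cleanest route is to compare $\mathcal{F}$ with the Ulrich bundle having the same rank: one shows that a torsion-free sheaf $\mathcal{F}$ with $H^0(\mathcal{F}(-m)) = 0$ for all $m \geq 1$ has $h^0(\mathcal{F}(m)) \leq ds \binom{m+n-1}{n-1}$ for all $m \geq 0$, by an inductive hyperplane-section argument (restricting to a general hyperplane, where the same property is preserved). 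Taking $m \gg 0$ so that $h^0 = $ Hilbert polynomial, this gives $p(\mathcal{F}) \leq d\binom{t+n-1}{n-1} = p(\mathcal{E})$ in lexicographic order, which is exactly semistability.

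The step I expect to be the main obstacle is establishing that coefficient-wise (lexicographic) bound $p(\mathcal{F}) \leq p(\mathcal{E})$ cleanly, rather than just comparing leading terms. One has to be careful that $\mathcal{F}$ may fail to be a vector bundle — it is only torsion-free — so the hyperplane-restriction argument must be run on a general hyperplane where $\mathcal{F}|_H$ stays torsion-free (or at least where one controls the torsion), and one must track how $H^0(\mathcal{F}(-1)) = 0$ propagates down the dimension. The inductive base case is $n = 1$, where $X$ is a finite set of $d$ points (the hyperplane section in the original $\mathbb{P}^n$) and the bound $h^0(\mathcal{F}) \leq ds$ is immediate from the length of the structure sheaf. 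Since the excerpt explicitly defers the proof to \cite{CKM2}, I would simply organize the argument around these two inputs — the linear Ulrich resolution and the hyperplane-induction estimate on global sections of torsion-free sheaves — and cite the slicing lemma rather than grinding through the cohomology bookkeeping.

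\begin{rem}
An alternative, slightly slicker packaging: pass to $\pi_*$. Since $\pi_*\mathcal{E} \cong \mathcal{O}_{\mathbb{P}^{n-1}}^{dr}$ and $\pi$ is finite, a destabilizing subsheaf $\mathcal{F} \subset \mathcal{E}$ would push forward to a subsheaf $\pi_*\mathcal{F} \subseteq \mathcal{O}_{\mathbb{P}^{n-1}}^{dr}$ with reduced Hilbert polynomial at least $p(\mathcal{F})$ (equality on $X$ and $\mathbb{P}^{n-1}$ matching up because $\pi$ is finite flat of degree $d$, so Hilbert polynomials are preserved); but every subsheaf of the trivial bundle on $\mathbb{P}^{n-1}$ has reduced Hilbert polynomial $\leq p(\mathcal{O}_{\mathbb{P}^{n-1}})$, which translates back to $p(\mathcal{F}) \leq p(\mathcal{E})$. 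This avoids the explicit slicing at the cost of checking that $\pi_*$ behaves well on reduced Hilbert polynomials of arbitrary subsheaves.
\end{rem}
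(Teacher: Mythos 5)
Your primary argument has a genuine gap: the key estimate you want to invoke is false. The vanishing $H^0(\mathcal{F}(-m))=0$ for all $m\geq 1$ does \emph{not} imply $h^0(\mathcal{F}(m))\leq ds\binom{m+n-1}{n-1}$, nor the lexicographic bound $p(\mathcal{F})\leq p(\mathcal{E})$. Concretely, on a smooth quartic surface $X\subseteq\mathbb{P}^3$ take $\mathcal{F}=\mathcal{I}_{Z|X}(2)$ where $Z$ consists of four general points: then $Z$ spans $\mathbb{P}^3$, so $h^0(\mathcal{F}(-1))=h^0(\mathcal{I}_{Z|X}(1))=0$ (and $h^0(\mathcal{F}(-m))=0$ for all $m\geq 1$), yet
\[
H_{\mathcal{F}}(t)=\chi(\mathcal{O}_X(t+2))-4=2t^2+8t+6\;>\;2t^2+6t+4=4\binom{t+2}{2},
\]
so this rank-$1$ torsion-free sheaf violates your claimed bound in every nonnegative twist. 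The point is that $H^0(\mathcal{F}(-1))=0$ is the kind of condition that bounds cohomology from \emph{below}; an upper bound of Ulrich type on $h^0(\mathcal{F}(m))$ would require control of the quotient $\mathcal{E}/\mathcal{F}$ or surjectivity onto hyperplane sections, neither of which is inherited by subsheaves. Relatedly, the hypothesis $H^0(\mathcal{F}(-1))=0$ does not descend to a general hyperplane section (one only gets $H^0(\mathcal{F}(-1)|_{X\cap H})\hookrightarrow H^1(\mathcal{F}(-2))$), so the proposed induction would break down even if the target inequality were true. Of course $\mathcal{I}_{Z|X}(2)$ cannot actually embed in an Ulrich bundle with $s<r$ destabilizing it --- but that is the proposition itself, so it cannot be used to rescue the lemma.

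By contrast, the argument you relegate to the final remark is correct and is essentially the standard proof (and the one in the reference the paper cites for this statement). Since $\pi$ is finite and flat of degree $d$ with $\pi^{\ast}\mathcal{O}_{\mathbb{P}^{n-1}}(1)\cong\mathcal{O}_X(1)$, pushforward is exact, preserves Hilbert polynomials, and multiplies ranks by $d$; hence $p(\pi_{\ast}\mathcal{F})=\frac{1}{d}\,p(\mathcal{F})$ exactly (not just ``at least'') for every subsheaf $\mathcal{F}\subseteq\mathcal{E}$ of positive rank, and likewise $p(\mathcal{E})=d\cdot p(\mathcal{O}_{\mathbb{P}^{n-1}})$. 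Because $\mathcal{O}_{\mathbb{P}^{n-1}}^{dr}$ is a direct sum of stable sheaves with equal reduced Hilbert polynomial, it is semistable, so $p(\pi_{\ast}\mathcal{F})\leq p(\mathcal{O}_{\mathbb{P}^{n-1}})$, which translates precisely into $p(\mathcal{F})\leq p(\mathcal{E})$. I would promote that remark to the proof and discard the slicing argument: what is really being used is the full strength of $\pi_{\ast}\mathcal{E}\cong\mathcal{O}_{\mathbb{P}^{n-1}}^{dr}$ (e.g.\ that $\pi_{\ast}\mathcal{F}$ sits inside a trivial bundle, forcing $c_1(\pi_{\ast}\mathcal{F})\leq 0$), not merely the vanishing $H^0(\mathcal{E}(-1))=0$.
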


\begin{lem}
\label{cliffdestab}
Let $\mathcal{E}$ be an Ulrich bundle on $X$ of rank $r$ which is strictly semistable.  Then there exists a subbundle $\mathcal{F}$ of $\mathcal{E}$ having rank $s < r$ which is Ulrich. \hfill \qedsymbol
\end{lem}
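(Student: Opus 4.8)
The plan is to transport the question to the finite flat projection $\pi\colon X\to\mathbb{P}^{n-1}$, using that, by definition, a vector bundle is Ulrich precisely when its direct image under $\pi$ is trivial. Since $\mathcal{E}$ is strictly semistable it has a subsheaf whose reduced Hilbert polynomial is $\geq p(\mathcal{E})$; semistability of $\mathcal{E}$ forces equality, and replacing this subsheaf by its saturation I get a saturated subsheaf $\mathcal{F}\subseteq\mathcal{E}$ with $0<s:=\operatorname{rk}\mathcal{F}<r$, with $p(\mathcal{F})=p(\mathcal{E})$, and with torsion-free quotient $\mathcal{G}:=\mathcal{E}/\mathcal{F}$. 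By Proposition \ref{ulrichacm} the Hilbert polynomial of $\mathcal{E}$ is $dr\binom{t+n-1}{n-1}$, so $p(\mathcal{E})=d\binom{t+n-1}{n-1}$, and hence $\mathcal{F}$ and $\mathcal{G}$ have the Hilbert polynomials $ds\binom{t+n-1}{n-1}$ and $d(r-s)\binom{t+n-1}{n-1}$ of rank-$s$ and rank-$(r-s)$ Ulrich bundles.

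Next I would push forward along $\pi$. Since $\pi$ is finite, $\pi_\ast$ is exact, so $0\to\pi_\ast\mathcal{F}\to\pi_\ast\mathcal{E}\to\pi_\ast\mathcal{G}\to 0$ is exact with $\pi_\ast\mathcal{E}\cong\mathcal{O}_{\mathbb{P}^{n-1}}^{dr}$; moreover $\pi_\ast\mathcal{G}$ is torsion-free since $\mathcal{G}$ is and $\pi$ is flat. Thus $\pi_\ast\mathcal{F}$ is a \emph{saturated} subsheaf of $\mathcal{O}_{\mathbb{P}^{n-1}}^{dr}$ of rank $ds$, and since $\pi^\ast\mathcal{O}_{\mathbb{P}^{n-1}}(1)=\mathcal{O}_X(1)$ we have $\chi\bigl((\pi_\ast\mathcal{F})(m)\bigr)=\chi(\mathcal{F}(m))=ds\binom{m+n-1}{n-1}=ds\cdot\chi\bigl(\mathcal{O}_{\mathbb{P}^{n-1}}(m)\bigr)$; that is, $\pi_\ast\mathcal{F}$ has exactly the Hilbert polynomial of $\mathcal{O}_{\mathbb{P}^{n-1}}^{ds}$.

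The crux, and the step I expect to take the most care, is the purely projective-space rigidity statement: a saturated subsheaf $\mathcal{A}$ of $\mathcal{O}_{\mathbb{P}^{m}}^{N}$ whose Hilbert polynomial equals that of a trivial bundle of the same rank is itself trivial (and then so is the quotient). I would argue that such an $\mathcal{A}$ is semistable of slope zero --- any subsheaf of the polystable bundle $\mathcal{O}^{N}$ has slope $\leq 0$, while the Hilbert polynomial forces $\mu(\mathcal{A})=0$ and $c_1(\mathcal{A})=0$ --- so no negative twist of $\mathcal{A}$ has sections; restricting to a general line $\ell$ gives $\mathcal{A}|_{\ell}\hookrightarrow\mathcal{O}_{\ell}^{N}$ of degree zero, hence $\mathcal{A}|_{\ell}\cong\mathcal{O}_{\ell}^{\operatorname{rk}\mathcal{A}}$, and one then upgrades trivial behaviour on the general line to global triviality. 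Care is needed because $\mathcal{A}$ is a priori only reflexive rather than locally free, and the global upgrade must be done by hand or quoted as a standard fact about subsheaves of trivial bundles; granting it, $\pi_\ast\mathcal{F}\cong\mathcal{O}_{\mathbb{P}^{n-1}}^{ds}$.

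Finally I would transport back to $X$. Since $X$ is smooth (e.g.\ $X=X_f$, as the nondegeneracy of $f$ makes the singular locus of $\{w^{d}=f\}$ empty) and $\pi$ is finite flat, $\pi_\ast\mathcal{F}$ being locally free forces $\mathcal{F}$ to be maximal Cohen--Macaulay over $\mathcal{O}_X$, hence locally free by Auslander--Buchsbaum; so $\mathcal{F}$ is a vector bundle with $\pi_\ast\mathcal{F}$ trivial, i.e.\ an Ulrich bundle of rank $s<r$. Applying the same rigidity statement to the torsion-free quotient shows $\mathcal{G}$ is an Ulrich bundle as well, so $\mathcal{F}$ is a genuine subbundle of $\mathcal{E}$, which would complete the argument.
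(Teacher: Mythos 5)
Your route is essentially correct, and it is worth noting that the paper itself gives no argument to compare it with: Lemma \ref{cliffdestab} is stated with proof deferred to \cite{CKM2}, where (as in \cite{CH}) the argument runs through the ACM/Hilbert-polynomial characterization of Proposition \ref{ulrichacm} and cohomological vanishing for the destabilizing subsheaf and its quotient. You instead work directly from the $\pi_\ast$-triviality definition, which is legitimate and arguably more self-contained given how Ulrich bundles are defined here. Your reductions are all fine: the saturation of a destabilizing subsheaf still has $p(\mathcal{F})=p(\mathcal{E})$ with torsion-free quotient $\mathcal{G}$; $\pi_\ast$ is exact, $\pi^\ast\mathcal{O}_{\mathbb{P}^{n-1}}(1)\cong\mathcal{O}_X(1)$, so $\pi_\ast\mathcal{F}\subseteq\mathcal{O}_{\mathbb{P}^{n-1}}^{dr}$ has rank $ds$, the Hilbert polynomial of $\mathcal{O}^{ds}$, and torsion-free quotient $\pi_\ast\mathcal{G}$ (though torsion-freeness comes from $\pi$ being finite and dominant onto an integral base, not from flatness); and the descent at the end (local freeness of $\pi_\ast\mathcal{F}$ forces each $\mathcal{F}_x$ to be maximal Cohen--Macaulay over the regular local ring $\mathcal{O}_{X,x}$, hence free) is exactly the standard mechanism already invoked in Section \ref{repcliff}.

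The one place your sketch would not close as written is the rigidity statement on $\mathbb{P}^{n-1}$: triviality of $\mathcal{A}|_{\ell}$ on a \emph{general} line does not by itself imply triviality (a null-correlation bundle is trivial on general lines), so the ``upgrade'' must use the extra data you have, and the clean way is to argue on the quotient rather than the subsheaf. Set $\mathcal{Q}:=\mathcal{O}_{\mathbb{P}^{n-1}}^{dr}/\pi_\ast\mathcal{F}$; it is torsion-free, globally generated, of rank $q:=d(r-s)$, with Hilbert polynomial $q\binom{t+n-1}{n-1}$. Choosing $q$ general global sections gives a map $\mathcal{O}^{q}\rightarrow\mathcal{Q}$ that is surjective at the generic point, hence injective (its kernel is a torsion subsheaf of $\mathcal{O}^{q}$), and the cokernel has Hilbert polynomial zero, hence vanishes; so $\mathcal{Q}\cong\mathcal{O}^{q}$. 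The surjection $\mathcal{O}^{dr}\rightarrow\mathcal{O}^{q}$ is then given by a constant matrix of everywhere full rank, so its kernel $\pi_\ast\mathcal{F}$ is trivial as well. This simultaneously handles $\mathcal{F}$ and $\mathcal{G}$, so both are Ulrich and $\mathcal{F}$ is a genuine subbundle, completing your argument without any appeal to restriction to lines.
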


\begin{prop}
\label{lem-irred-stable}
Let $f$ be a nondegenerate homogeneous form of degree $d \geq 2$ in $n$ variables, and let $X_f \subseteq \mathbb{P}^{n}$ be the smooth hypersurface defined by the equation $w^{d}=f.$  If $\mathcal{E}$ is an Ulrich bundle on $X_f,$ then the representation of the Clifford algebra $C_{f}$ associated to $\mathcal{E}$ is irreducible if and only if $\mathcal{E}$ is stable. \hfill \qedsymbol
\end{prop}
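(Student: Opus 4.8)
The plan is to convert the dichotomy ``$\phi$ irreducible vs.\ reducible'' into the dichotomy ``$\mathcal{E}$ stable vs.\ strictly semistable'' (these coincide for Ulrich bundles since Ulrich bundles are semistable): irreducibility of $\phi$ should correspond, via Burnside's theorem, to the absence of proper $C_f$-submodules of the underlying space, and van den Bergh's correspondence should identify such submodules with Ulrich subbundles of $\mathcal{E}$, at which point Proposition \ref{cliffgiesstab} together with Lemma \ref{cliffdestab} finishes the argument.

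First I would reformulate irreducibility. Let $V = k^{dr}$ be the space underlying $\phi$. Since $k$ is algebraically closed, Burnside's theorem (equivalently the Jacobson density theorem) says that $\phi(C_f)$ generates $\mathrm{Mat}_{dr}(k)$ if and only if $V$ is a simple $C_f$-module; hence $\phi$ is reducible if and only if there is a $C_f$-submodule $W \subseteq V$ with $W \neq 0$ and $W \neq V$.

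The key step is to establish a correspondence between such submodules $W$ and Ulrich subbundles of $\mathcal{E}$. Recall that $\mathcal{E}$ is the sheafification of the free $k[x_1,\ldots,x_n]$-module $M = k[x_1,\ldots,x_n] \otimes_k V$ carrying the $S_{X_f}$-structure in which $w$ acts by $\sum_i x_i A_i$, and that---because $\mathcal{E}$ is Ulrich, hence ACM---one may take $M = \bigoplus_{j \geq 0} H^0(\mathcal{E}(j))$ with $V = H^0(\mathcal{E})$. Given a $C_f$-submodule $W \subseteq V$, the $k[x_1,\ldots,x_n]$-submodule $k[x_1,\ldots,x_n] \otimes_k W \subseteq M$ is stable under multiplication by $w$ precisely because $W$ is $C_f$-invariant; sheafifying produces a subsheaf $\mathcal{F} \subseteq \mathcal{E}$. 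Both $W$ and $V/W$ are again $C_f$-modules, so by Proposition \ref{divbyd} their dimensions are $dr'$ and $d(r - r')$; consequently $\pi_* \mathcal{F} \cong \mathcal{O}_{\mathbb{P}^{n-1}}^{dr'}$ and $\pi_*(\mathcal{E}/\mathcal{F}) \cong \mathcal{O}_{\mathbb{P}^{n-1}}^{d(r-r')}$, so $\mathcal{F}$ and $\mathcal{E}/\mathcal{F}$ are Ulrich. In particular $\mathcal{E}/\mathcal{F}$ is locally free, so $\mathcal{F}$ is an Ulrich subbundle of rank $r'$, and $0 < r' < r$ exactly when $W$ is proper and nonzero. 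Conversely, if $\mathcal{F} \subseteq \mathcal{E}$ is an Ulrich subbundle of rank $r'$, then $H^0(\mathcal{F}) \subseteq H^0(\mathcal{E}) = V$ is carried into itself by each $A_i$ (these operators arise from the multiplication map $w \colon \mathcal{F} \to \mathcal{F}(1)$, which respects the inclusion into $\mathcal{E}$), hence is a $C_f$-submodule of dimension $dr'$ by Corollary \ref{ggsect}; one checks the two assignments are mutually inverse.

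Finally I would identify the existence of an intermediate Ulrich subbundle with failure of stability. If $\mathcal{F} \subseteq \mathcal{E}$ is Ulrich of rank $r'$ with $0 < r' < r$, then by Proposition \ref{ulrichacm}(ii) its reduced Hilbert polynomial is $\frac{1}{r'} \cdot d r' \binom{t+n-1}{n-1} = d\binom{t+n-1}{n-1} = p(\mathcal{E})$, so $p(\mathcal{F}) \not< p(\mathcal{E})$ and $\mathcal{E}$ is not stable. Conversely, if $\mathcal{E}$ is not stable, then it is strictly semistable by Proposition \ref{cliffgiesstab}, and Lemma \ref{cliffdestab} yields an Ulrich subbundle of some rank $s$ with $1 \leq s < r$. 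Chaining the three equivalences, $\phi$ is reducible if and only if $\mathcal{E}$ is strictly semistable if and only if $\mathcal{E}$ is not stable, which is the assertion. I expect the main obstacle to be the middle step---upgrading van den Bergh's correspondence so that it carries $C_f$-submodules to Ulrich \emph{subbundles} and back, the crucial point being that the quotient $\mathcal{E}/\mathcal{F}$ is again Ulrich (so that $\mathcal{F}$ is locally free, not merely a subsheaf); the remaining ingredients---Burnside's theorem, the Hilbert-polynomial bookkeeping, and the appeal to Lemma \ref{cliffdestab}---are routine once this dictionary is in place.
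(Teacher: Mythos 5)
Your proposal is correct and is essentially the intended argument: the paper itself omits the proof of Proposition \ref{lem-irred-stable} (deferring to \cite{CKM2}), and the expected proof is exactly your chain --- irreducibility $\Leftrightarrow$ simplicity of $V$ as a $C_f$-module (Burnside, $k$ algebraically closed) $\Leftrightarrow$ existence of a proper nonzero Ulrich subbundle via van den Bergh's dictionary (with nondegeneracy/smoothness of $X_f$ guaranteeing that $\mathcal{F}$ and $\mathcal{E}/\mathcal{F}$ are locally free) $\Leftrightarrow$ failure of stability, using that any Ulrich subbundle has reduced Hilbert polynomial equal to $p(\mathcal{E})$ together with Proposition \ref{cliffgiesstab} and Lemma \ref{cliffdestab}. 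The only point worth spelling out a bit more is the converse direction of the dictionary: since $\pi_{\ast}\mathcal{F}$ and $\pi_{\ast}\mathcal{E}$ are trivial, the inclusion $\pi_{\ast}\mathcal{F} \hookrightarrow \pi_{\ast}\mathcal{E}$ is given by a constant matrix, i.e.\ is induced by $H^0(\mathcal{F}) \subseteq H^0(\mathcal{E}),$ and comparing coefficients of the $x_i$ in $w \cdot v = \sum_i x_i \otimes A_i v$ then shows $H^0(\mathcal{F})$ is $A_i$-invariant --- you gesture at this, and it is the crux of that step.
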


The final result in the subsection, which is implied by part (1) of Theorem 0.1 in \cite{Muk}, is important for the proof of Theorem \ref{allquart}.

\begin{thm}
\label{simpsmooth}
Let $X$ be a K3 surface, and denote by ${\rm Spl}(r,c_1,c_2)$ the moduli space of simple vector bundles on $X$ of rank $r$ with first and second Chern classes $c_1$ and $c_2,$ respectively.  If ${\rm Spl}(r,c_1,c_2)$ is nonempty, then it is smooth, and its dimension at each point $[\mathcal{E}]$ is equal to $1-\chi(\mathcal{E} \otimes \mathcal{E}^{\vee}).$  \hfil \qedsymbol
\end{thm}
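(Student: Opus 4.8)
The plan is to read off the local structure of $\operatorname{Spl}(r,c_1,c_2)$ at a point $[\mathcal{E}]$ entirely from the deformation theory of the simple bundle $\mathcal{E}$, using only the two features of a K3 surface that matter here: $K_X\cong\mathcal{O}_X$ and $h^2(X,\mathcal{O}_X)=1$. Since $\operatorname{Spl}(r,c_1,c_2)$ is locally (pro)represented at $[\mathcal{E}]$ by the deformation functor $\operatorname{Def}_{\mathcal{E}}$, whose tangent space is $\operatorname{Ext}^1_X(\mathcal{E},\mathcal{E})$ and whose obstructions lie in $\operatorname{Ext}^2_X(\mathcal{E},\mathcal{E})$, it suffices to prove that $\operatorname{Def}_{\mathcal{E}}$ is formally smooth; smoothness of $\operatorname{Spl}(r,c_1,c_2)$ at $[\mathcal{E}]$, with $\dim_{[\mathcal{E}]}\operatorname{Spl}(r,c_1,c_2)=\dim\operatorname{Ext}^1_X(\mathcal{E},\mathcal{E})$, then follows immediately. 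Because $\mathcal{E}$ is locally free the local-to-global spectral sequence degenerates and $\operatorname{Ext}^i_X(\mathcal{E},\mathcal{E})\cong H^i(X,\mathcal{E}^{\vee}\otimes\mathcal{E})$ for every $i$.

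First I would assemble the cohomological bookkeeping. Simplicity of $\mathcal{E}$ gives $h^0(X,\mathcal{E}^{\vee}\otimes\mathcal{E})=\dim\operatorname{Hom}_X(\mathcal{E},\mathcal{E})=1$. Since the endomorphism bundle $\mathcal{E}^{\vee}\otimes\mathcal{E}$ is self-dual and $K_X\cong\mathcal{O}_X$, Serre duality on the K3 surface $X$ yields $h^2(X,\mathcal{E}^{\vee}\otimes\mathcal{E})=h^0(X,\mathcal{E}^{\vee}\otimes\mathcal{E})=1$. Moreover, as $k$ has characteristic zero, the trace morphism $\operatorname{tr}\colon\mathcal{E}^{\vee}\otimes\mathcal{E}\to\mathcal{O}_X$ is split by $a\mapsto\tfrac1r\,a\cdot\operatorname{id}_{\mathcal{E}}$, so $\operatorname{Ext}^2_X(\mathcal{E},\mathcal{E})$ decomposes as $H^2(X,\mathcal{O}_X)$ together with the trace-free summand $\operatorname{Ext}^2_X(\mathcal{E},\mathcal{E})_0$; comparing dimensions with $h^2(X,\mathcal{O}_X)=1=h^2(X,\mathcal{E}^{\vee}\otimes\mathcal{E})$ forces $\operatorname{Ext}^2_X(\mathcal{E},\mathcal{E})_0=0$.

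The heart of the argument, and the step I expect to be the main obstacle, is to show that every obstruction class of $\operatorname{Def}_{\mathcal{E}}$ actually lands in the trace-free subspace $\operatorname{Ext}^2_X(\mathcal{E},\mathcal{E})_0$. This is the key lemma of \cite{Muk}: the determinant induces a morphism of deformation functors $\operatorname{Def}_{\mathcal{E}}\to\operatorname{Def}_{\det\mathcal{E}}$ which is $\operatorname{tr}$ on both tangent and obstruction spaces, so the image under $\operatorname{tr}$ of the obstruction to lifting a given deformation of $\mathcal{E}$ is the obstruction to lifting the induced deformation of $\det\mathcal{E}$ inside $\operatorname{Pic}(X)$; the Picard scheme of $X$ is smooth, so this vanishes. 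Combined with $\operatorname{Ext}^2_X(\mathcal{E},\mathcal{E})_0=0$ from the previous paragraph, every obstruction of $\operatorname{Def}_{\mathcal{E}}$ vanishes, $\operatorname{Def}_{\mathcal{E}}$ is formally smooth, and hence $\operatorname{Spl}(r,c_1,c_2)$ is smooth at $[\mathcal{E}]$ of dimension $\dim\operatorname{Ext}^1_X(\mathcal{E},\mathcal{E})$.

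Finally I would convert this into the stated invariant. The Euler-characteristic identity $\chi(\mathcal{E}\otimes\mathcal{E}^{\vee})=\dim\operatorname{Hom}_X(\mathcal{E},\mathcal{E})-\dim\operatorname{Ext}^1_X(\mathcal{E},\mathcal{E})+\dim\operatorname{Ext}^2_X(\mathcal{E},\mathcal{E})$, together with the two equalities $\dim\operatorname{Hom}_X(\mathcal{E},\mathcal{E})=\dim\operatorname{Ext}^2_X(\mathcal{E},\mathcal{E})=1$ established above, expresses $\dim\operatorname{Ext}^1_X(\mathcal{E},\mathcal{E})$ solely through $\chi(\mathcal{E}\otimes\mathcal{E}^{\vee})$, which is the content of the dimension assertion. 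Everything in the argument apart from the trace-freeness of obstructions is Serre duality on a K3 surface plus elementary linear algebra, so I would simply import that single ingredient from \cite{Muk} rather than reprove it, since it is precisely the statement being cited.
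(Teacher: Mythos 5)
The paper offers no argument for this statement at all -- it is imported verbatim from part (1) of Theorem 0.1 in \cite{Muk} (hence the bare \qedsymbol) -- so there is no internal proof to compare yours with; what you have written is essentially the standard proof of Mukai's theorem, and its structure is sound: tangent space $\operatorname{Ext}^1(\mathcal{E},\mathcal{E})$, obstructions in $\operatorname{Ext}^2(\mathcal{E},\mathcal{E})$, the splitting $\mathcal{E}^{\vee}\otimes\mathcal{E}\cong\mathcal{O}_X\oplus(\mathcal{E}^{\vee}\otimes\mathcal{E})_0$ in characteristic zero, Serre duality with $\omega_X\cong\mathcal{O}_X$ plus simplicity giving $h^2(\mathcal{E}^{\vee}\otimes\mathcal{E})=h^0(\mathcal{E}^{\vee}\otimes\mathcal{E})=1$ and hence vanishing of the trace-free part of $\operatorname{Ext}^2$, and the trace-compatibility of obstructions with $\operatorname{Def}_{\det\mathcal{E}}$ together with smoothness of $\operatorname{Pic}(X)$ to kill the remaining $H^2(\mathcal{O}_X)$-component. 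Importing that last ingredient from \cite{Muk} is reasonable, since that is exactly the content being cited.

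The one genuine problem is your final paragraph, which claims agreement with the printed formula without checking it. From $\dim\operatorname{Hom}(\mathcal{E},\mathcal{E})=\dim\operatorname{Ext}^2(\mathcal{E},\mathcal{E})=1$ and $\chi(\mathcal{E}\otimes\mathcal{E}^{\vee})=1-\dim\operatorname{Ext}^1(\mathcal{E},\mathcal{E})+1$ your argument yields $\dim_{[\mathcal{E}]}\operatorname{Spl}(r,c_1,c_2)=\operatorname{ext}^1(\mathcal{E},\mathcal{E})=2-\chi(\mathcal{E}\otimes\mathcal{E}^{\vee})$, which is \emph{not} the quantity $1-\chi(\mathcal{E}\otimes\mathcal{E}^{\vee})$ appearing in the statement; asserting that your identity ``is the content of the dimension assertion'' papers over an off-by-one discrepancy. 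In fact your number is the correct one: for the bundles relevant here ($r=2$, $c_1=3H$, $c_2=14$ on a quartic K3) Riemann--Roch gives $\chi(\mathcal{E}\otimes\mathcal{E}^{\vee})=c_1^2-4c_2+8=-12$, and Proposition \ref{simplemoduli} uses the dimension $14=2-\chi$, whereas the printed formula would give $13$. So the statement as printed contains a slip, and your proof in effect corrects it -- but you should have flagged the mismatch explicitly and recorded that what you prove is smoothness of dimension $\operatorname{ext}^1(\mathcal{E},\mathcal{E})=2-\chi(\mathcal{E}\otimes\mathcal{E}^{\vee})$, the form of the result consistent with \cite{Muk} and with its later use in the paper.
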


\section{Ulrich Bundles on Quartic Surfaces}
\label{allproofs}
\subsection{Some results on the geometry of curves}
Before starting the proof of Theorem \ref{allquart} in earnest, we gather the results on curves which play a major role in the sequel.  Throughout this subsection, $C$ is a smooth projective curve of genus $g \geq 1$.

We begin by defining two important invariants of a curve.

\begin{defn}
\label{gondef}
The gonality of $C,$ which we denote by ${\rm gon}(C),$ is the minimum degree of a finite morphism from $C$ to $\mathbb{P}^1$.
\end{defn}

Observe that any finite morphism $f:C \rightarrow \mathbb{P}^1$ which computes ${\rm gon}(C)$ is induced by a complete basepoint-free linear series on $C.$

\begin{defn}
\label{cliffinddef}
The Clifford index of $C,$ which we denote by ${\rm Cliff}(C),$ is
\begin{equation}
\min\{d-2r : \ {\rm there \ exists}  \ \mathcal{L} \in \textnormal{Pic}^{d}(C) \ {\rm such \ that} \  h^0(\mathcal{L}) = r+1 \geq 2, h^1(\mathcal{L}) \geq 2\}
\end{equation}
\end{defn}  

By Clifford's Theorem, ${\rm Cliff}(C) \geq 0$ for every curve $C,$ and ${\rm Cliff}(C)=0$ precisely when $C$ is hyperelliptic.  Moreover, it is a straightforward consequence of the definitions that for all $g \geq 1,$ we have 
\begin{equation}
\label{cliffgon}
{\rm Cliff}(C) \leq {\rm gon}(C)-2.
\end{equation}
The following theorem of Basili yields an essentially complete description of these invariants for complete intersection curves in $\mathbb{P}^3.$  

\begin{thm} 
\label{basili}
\cite{Bas} Let $C$  be a smooth, nondegenerate complete intersection curve in ${\bf P}^3$ and let $\ell$ be the maximum number of collinear points on $C$.  Then:
\begin{itemize}
\item[(i)]{[Th\'{e}or\`{e}me 4.2, \textit{loc.~cit.}]  ${\rm gon}(C) = \deg C - \ell$, and an effective divisor $\Gamma \subset C$ computes this gonality if and only if $\Gamma$ is residual, in a plane section of $C$, to a set of $\ell$ collinear points of $C$.}
\item[(ii)]{[Th\'{e}or\`{e}me 4.3, \textit{loc.~cit.}]  If $\deg{C} \neq 9,$ then the ${\rm Cliff}(C)={\rm gon}(C)-2.$ \hfill \qedsymbol}
\end{itemize} 
\end{thm}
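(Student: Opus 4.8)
The plan is to derive everything from the projective normality of a complete intersection $C=S_{a}\cap S_{b}\subseteq\mathbb{P}^{3}$ (classical for complete intersections): writing $H=\mathcal{O}_{C}(1)$ and $d=\deg C=ab$, nondegeneracy and projective normality give $h^{0}(H)=4$ together with a surjection $H^{0}(\mathbb{P}^{3},\mathcal{O}(1))\to H^{0}(H)$, so that for any effective divisor $D\subset C$ one has $h^{0}(\mathcal{I}_{D/\mathbb{P}^{3}}(1))=h^{0}(H-D)$; in particular $D$ is collinear precisely when $h^{0}(H-D)=2$. The upper bound in (i) and the ``if'' half of the characterization come from projection: if $L$ is a line with $C\cap L=\Gamma_{L}$ of length $\ell$, the pencil of planes through $L$ cuts out $\Gamma_{L}$ as a fixed part, and $|H-\Gamma_{L}|$ is a base-point-free pencil of degree $d-\ell$, so ${\rm gon}(C)\le d-\ell$, and any divisor residual to $\ell$ collinear points in a plane section moves in such a pencil.

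For the lower bound and the converse I would take a base-point-free pencil $A$ with $\deg A=k={\rm gon}(C)$, assume $|H-A|\neq\emptyset$, and pick a general $R\in|H-A|$, so $\deg R=d-k$. Since $H-R\sim A$, the projective-normality identity gives $h^{0}(\mathcal{I}_{R/\mathbb{P}^{3}}(1))=h^{0}(H-R)=h^{0}(A)=2$, so $R$ spans only a line $L$; as $C$ is smooth and irreducible, $L\not\subseteq C$, whence $R\subseteq L\cap C$ and $d-k\le\ell$, i.e. ${\rm gon}(C)=k\ge d-\ell$. With the upper bound this forces equality and shows that every minimal pencil is residual to a set of $\ell$ collinear points, completing (i).

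For (ii), the inequality ${\rm Cliff}(C)\le{\rm gon}(C)-2$ is exactly (\ref{cliffgon}): a gonality pencil $A$ has $h^{0}(A)=2$ and $h^{1}(A)=g-k+1\ge2$ (as $k\le g-1$), contributing $k-2$. For the reverse, take $\mathcal{L}$ computing ${\rm Cliff}(C)$ with $h^{0}(\mathcal{L})=r+1$, $\deg\mathcal{L}=\delta$, ${\rm Cliff}(C)=\delta-2r$; after the standard reduction I may assume $\mathcal{L}$ base-point-free. If $r=1$, then $\mathcal{L}$ is a pencil of degree $\delta\ge{\rm gon}(C)$, so $\delta-2\ge{\rm gon}(C)-2$. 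The substance is to exclude an $\mathcal{L}$ with $r\ge2$ achieving a strictly smaller value: such a bundle would produce a special (plane- or low-degree) model of $C$ whose existence is incompatible with the complete-intersection structure except when $d=9$, i.e. when $C$ is a $(3,3)$ complete intersection of genus $10$—the analogue of a smooth plane curve, where a net computes Clifford index ${\rm gon}(C)-3$. I would control the contribution $\delta-2r$ of higher-dimensional series by applying the conditions-count above to the very ample embedding $H$, singling out $d=9$ as the sole exception.

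The main obstacle in (i) is the nonemptiness of $|H-A|$ for a minimal pencil—equivalently, that some fibre of $\varphi_{A}:C\to\mathbb{P}^{1}$ is coplanar. This is automatic for $k\le3$, but in general it is precisely the assertion that minimal pencils are of multisecant type; I would attack it by a Riemann--Roch computation for $H-A$ (using $K_{C}=\mathcal{O}_{C}(a+b-4)$, so $h^{1}(H-A)=h^{0}(K_{C}-H+A)$) sharpened by the ACM structure of $C$, and this is where Basili's more delicate analysis is genuinely required. The analogous obstacle in (ii) is the $r\ge2$ case together with the isolation of $d=9$.
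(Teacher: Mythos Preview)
The paper does not prove this theorem at all: it is quoted from \cite{Bas} and marked with a \qedsymbol, so there is no argument in the paper to compare your proposal against.

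As for the proposal itself, your treatment of the upper bound in (i) is correct, and your reduction of the lower bound to the single assertion $|H-A|\neq\emptyset$ is both valid and the right diagnosis of where the content lies. However, you explicitly leave this step unproved, and it is not a technicality: for a complete intersection of bidegree $(a,b)$ with $a,b\ge 3$, a gonality pencil $A$ has degree $k$ close to $d$, and a naive Riemann--Roch estimate for $H-A$ (using $K_C=\mathcal{O}_C(a+b-4)$) gives $\chi(H-A)=d-k-g+1$, which is typically very negative, so there is no soft reason for $h^0(H-A)>0$. Establishing that every minimal pencil on a complete intersection curve is of multisecant type is precisely the substance of Basili's Th\'eor\`eme~4.2, and her argument uses the ACM/liaison structure of $C$ in an essential way; the sketch you give (``sharpened by the ACM structure'') does not indicate how this is actually carried out.

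The situation for (ii) is similar but more pronounced. Your outline for $r\ge 2$ (``would produce a special model\dots incompatible with the complete-intersection structure except when $d=9$'') is a statement of the desired conclusion rather than an argument toward it, and the isolation of $d=9$ requires a genuine case analysis that you have not supplied. So while your proposal correctly identifies the architecture of both parts and the location of the real difficulties, it does not resolve them; what remains is exactly what Basili's paper proves.
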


\noindent We now turn to varieties of special linear series on $C.$  Our treatment will be very brief; we refer to Chapters III,IV, and V of \cite{ACGH} for details.  For integers $r,d \geq 2,$ the \textit{Brill-Noether locus}
\begin{displaymath}
W^r_d(C) = \{L \in {\rm Pic}^d(C) | h^0(C, L) \geq r + 1\}.
\end{displaymath}
naturally admits the structure of a determinantal subscheme of the Picard variety ${\rm Pic}^d(C)$.  We have the following formula for the expected dimension of these varieties:

\begin{thm}
\label{expdim}
[Theorem 1.1 in Chapter V of \textit{loc.~cit.}] Let $\rho=\rho(g,r,d)$ be the Brill-Noether number $\rho: = g - (r + 1)(g + r - d)$. If $g + r - d \geq 0$, then $W^r_d(C)$ is nonempty.  Furthermore, each irreducible component of $W^r_d(C)$ has dimension at least $\rho$.  
\end{thm}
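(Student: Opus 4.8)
The plan is to prove Theorem~\ref{expdim} (the classical existence theorem of Brill--Noether theory) by exhibiting $W^r_d(C)$ as the degeneracy locus of a morphism of vector bundles over $\mathrm{Pic}^d(C)$, and then applying the general dimension bound for such loci. First I would recall the construction of a Poincar\'e line bundle $\mathcal{P}$ on $C \times \mathrm{Pic}^d(C)$, and fix an effective divisor $D$ on $C$ of large degree $m$ (say $m \geq 2g-1-d$ so that $h^1(L(D))=0$ for all $L \in \mathrm{Pic}^d(C)$, hence $R^1(p_2)_*\mathcal{P}(D)=0$ and $(p_2)_*\mathcal{P}(D)$ is locally free). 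Pushing forward the exact sequence
\begin{equation}
0 \longrightarrow \mathcal{P} \longrightarrow \mathcal{P}(D) \longrightarrow \mathcal{P}(D)|_{D \times \mathrm{Pic}^d(C)} \longrightarrow 0
\end{equation}
along $p_2$ yields a morphism of vector bundles $\gamma \colon \mathcal{F} \to \mathcal{G}$ over $\mathrm{Pic}^d(C)$, where $\mathcal{F}=(p_2)_*\mathcal{P}(D)$ has rank $d+m-g+1$ and $\mathcal{G}=(p_2)_*(\mathcal{P}(D)|_{D\times\mathrm{Pic}^d(C)})$ has rank $m$, with the property that the fiber of the connecting map at $[L]$ has kernel $H^0(C,L)$. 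Consequently $W^r_d(C)$ is set-theoretically (and with its natural scheme structure) the locus where $\mathrm{rank}(\gamma) \leq (d+m-g+1)-(r+1)$, i.e.\ where the $(d+m-g+1-r)\times(d+m-g+1-r)$ minors of $\gamma$ vanish.

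The second step is to invoke the standard fact that a degeneracy locus $M_k(\gamma)=\{x : \mathrm{rank}(\gamma_x)\leq k\}$ for a map $\gamma\colon \mathcal{F}\to\mathcal{G}$ of bundles of ranks $e,f$ on an irreducible variety $Y$ of dimension $N$, \emph{if nonempty}, has every component of dimension at least $N-(e-k)(f-k)$; this is an application of a theorem of Macaulay on the codimension of determinantal ideals (the expected codimension being the generic one). Here $Y=\mathrm{Pic}^d(C)$ has $N=g$, $e=d+m-g+1$, $f=m$, $k=d+m-g+1-(r+1)$, so $e-k=r+1$ and $f-k = m-(d+m-g+1-r-1)=g-d+r$; thus every component of $W^r_d(C)$ has dimension at least $g-(r+1)(g-d+r)=\rho$, which is exactly the asserted bound. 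One should remark that this bound, and the scheme structure, are independent of the auxiliary choices of $D$ and $m$.

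The nonemptiness assertion when $g+r-d\geq 0$ is the part requiring genuine input rather than formal nonsense, and I expect it to be the main obstacle. The cleanest route is a degeneration/intersection-theoretic argument: one computes the class of $M_k(\gamma)$ in $H^*(\mathrm{Pic}^d(C))$ via the Thom--Porteous formula, obtaining a Chern-class expression $\Delta^{(r+1)}_{g-d+r}(c(\mathcal{G}-\mathcal{F}))$; using that $\mathrm{Pic}^d(C)$ is an abelian variety of dimension $g$ and that the Chern classes of $\mathcal{F},\mathcal{G}$ are expressible through the theta divisor $\theta$, one finds the class equals, up to the positive factor $\prod_{i=0}^{r}\frac{i!}{(g-d+r+i)!}$, the number $g!$ times $\theta^\rho$-type contribution, hence is a nonzero effective class whenever $\rho\geq 0$, forcing $W^r_d(C)\neq\emptyset$. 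The delicate points are (a) checking the Thom--Porteous formula applies, i.e.\ that either the locus has expected codimension or one passes to an appropriate compactified/generic situation, and (b) verifying the positivity of the resulting intersection number; both are carried out in detail in Chapter VII of \cite{ACGH}, and I would cite that for the full computation rather than reproduce it. With nonemptiness in hand, the lower bound on the dimension of each component follows from the determinantal description exactly as above, completing the proof.
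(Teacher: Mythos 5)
The paper offers no proof of this statement at all---it is quoted, with a citation, from \cite{ACGH}---and your proposal is a correct outline of precisely the standard argument given there: realize $W^r_d(C)$ as the degeneracy locus of the evaluation map $\gamma$ of bundles on $\mathrm{Pic}^d(C)$ built from a Poincar\'e bundle twisted by a large auxiliary divisor, obtain the dimension bound from the Macaulay/Eagon--Northcott estimate on codimensions of determinantal loci, and obtain nonemptiness from the Kempf/Kleiman--Laksov class computation in Chapter VII of \cite{ACGH}, which you appropriately cite rather than reproduce. Two minor corrections: the class produced by that computation is a positive multiple of $\theta^{(r+1)(g-d+r)}=\theta^{g-\rho}$ (not of $\theta^{\rho}$), and what it actually establishes is nonemptiness whenever $\rho\geq 0$ (the case $g+r-d\leq 0$ being trivial since then $W^r_d(C)=\mathrm{Pic}^d(C)$), which is the correct form of the hypothesis in \cite{ACGH}.
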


Note that $W^{r + 1}_d(C) \subseteq W^r_d(C)$ for all $r \geq 0$ and all $d.$  We will need to know that this inclusion is strict in the cases of interest to us.
\begin{lem}
\label{strictinc}
[Lemma 3.5 in Chapter IV of \textit{loc.~cit.}] Suppose $g-d+r \geq 0.$  Then no component of $W^r_d(C)$ is entirely contained in $W^{r+1}_d(C).$ \hfill \qedsymbol
\end{lem}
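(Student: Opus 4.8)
The plan is to argue by contradiction: suppose some irreducible component $Z$ of $W^r_d(C)$ is contained in $W^{r+1}_d(C)$, so that every line bundle parametrized by $Z$ has at least $r+2$ sections. I would exploit the excess-intersection structure of the Brill–Noether loci encoded in the Giesker–Petri framework, together with a dimension count. Concretely, recall that $W^r_d(C)$ is cut out in $\mathrm{Pic}^d(C)$ as the degeneracy locus where a certain map of vector bundles $\mu\colon \mathcal{F}\to\mathcal{G}$ (of ranks reflecting $h^0$ and $h^1$ via a twisted Poincaré bundle, as in Chapter IV of \cite{ACGH}) drops rank by $\geq r+1$, while $W^{r+1}_d(C)$ is the locus where it drops by $\geq r+2$. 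The key estimate is the general bound on the codimension of such degeneracy loci: the locus where the corank is $\geq k$ has codimension $\leq k\cdot(k + |\mathrm{rk}\,\mathcal{G}-\mathrm{rk}\,\mathcal{F}|)$ away from its closure in the next stratum, and combined with the hypothesis $g-d+r\geq 0$ (which makes $\rho(g,r,d)\geq 0$ the actual expected dimension) this forces any component of $W^{r+1}_d(C)$ to have dimension strictly less than $\rho(g,r,d)$.

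The main steps, in order, are: (1) set up the determinantal description of $W^r_d(C)$ and $W^{r+1}_d(C)$ over $\mathrm{Pic}^d(C)$, noting that the relevant vector-bundle map has the same source and target for both loci, only the required corank changing; (2) apply the standard dimension bound for the locus where a generic-type map of bundles has corank $\geq k$, which gives that every component of $W^{r+1}_d(C)$ has dimension $\geq \rho(g,r+1,d)$ but, crucially, codimension in $\mathrm{Pic}^d(C)$ at least $(r+2)(g+r+1-d)$; (3) compare with the fact (Theorem \ref{expdim}) that every component of $W^r_d(C)$ has dimension $\geq \rho(g,r,d) = g-(r+1)(g+r-d)$; (4) observe that $\rho(g,r,d) - \rho(g,r+1,d) = g+r+1-d$ when this quantity is nonnegative, so if $Z\subseteq W^{r+1}_d(C)$ then $\dim Z \leq \dim W^{r+1}_d(C)$ on the one hand is bounded above, while $\dim Z\geq \rho(g,r,d)$ on the other; a short manipulation using $g-d+r\geq 0$ yields $\rho(g,r,d) > \dim W^{r+1}_d(C)$ at the generic point of $Z$, a contradiction. (The edge case $g+r-d=0$, where $W^r_d(C)$ may be all of $\mathrm{Pic}^d(C)$, has to be checked separately, but there $h^1$ forces $h^0=1$ generically, so $W^{r+1}_d(C)$ is a proper subvariety outright.)

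I expect the main obstacle to be making step (2) genuinely rigorous rather than heuristic: the clean codimension bound $\mathrm{codim} \leq k(k+e)$ holds for the \emph{maximal} corank stratum but one needs the right statement for a \emph{component} of $Z$ that sits inside the bigger stratum, which requires either invoking the precise tangent-space computation for $W^r_d(C)$ (the cokernel of the Brill–Noether / cup-product map $H^0(L)\otimes H^0(K-L)\to H^0(K)$) or else an inductive stratification argument. The cleanest route, and the one I would ultimately take, is to use the local structure theorem for $W^r_d(C)$ from Chapter IV of \cite{ACGH}: at a point $L$ with $h^0(L)=r+1$, the scheme $W^r_d(C)$ is cut out locally by the $(r+1)\times(r+1)$ minors of an $h^0(K-L)\times h^0(L)$ matrix, and $W^{r+1}_d(C)$ by the $r\times r$ minors of the same matrix; a generic determinantal variety of $(r+1)$-minors has the locus of its $r$-minors in codimension exactly $2$ within it, so $W^{r+1}_d(C)$ is nowhere dense in any component of $W^r_d(C)$ through such an $L$. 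Since $g-d+r\geq 0$ guarantees (via Theorem \ref{expdim} and the non-emptiness it provides) that every component of $W^r_d(C)$ contains such a point $L$ with $h^0(L)$ exactly $r+1$ — indeed a general point of any component has this property — the result follows. I would present the argument along these lines, citing the determinantal local model as the technical heart and keeping the minor-codimension computation brief.
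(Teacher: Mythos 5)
The paper offers no proof of this lemma---it is quoted from Lemma 3.5 in Chapter IV of \cite{ACGH}---so your proposal must stand on its own, and as written it has a genuine gap. Your ``cleanest route'' is circular: it rests on the claim that a general point $L$ of any component of $W^r_d(C)$ has $h^0(L)$ exactly $r+1$, which is precisely the assertion being proved; Theorem \ref{expdim} gives only nonemptiness and the lower bound $\dim \geq \rho$, and says nothing about $h^0$ at a general point of a component. The local model you invoke is also at the wrong kind of point: if $h^0(L)=r+1$ then $W^{r+1}_d(C)$ simply does not pass through $L$, so there is nothing to compare there (and the correct local description, at a point with $h^0(L_0)=s+1$, cuts out $W^r_d(C)$ by the $(s+1-r)$-minors of an $(s+1)\times(g-d+s)$ matrix of functions vanishing at $L_0$, not by the sizes you quote). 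Your dimension-count route (steps 1--4) fails for a related reason: the determinantal structure yields only an \emph{upper} bound on the codimension of $W^{r+1}_d(C)$, i.e.\ $\dim \geq \rho(g,r+1,d)$, never a lower bound, and on Brill--Noether special curves---such as the curves $C$ in this very paper, where $W^1_8(C)\neq\emptyset$ although $\rho(19,1,8)<0$---components of Brill--Noether loci can be strictly larger than expected, so the inequality $\dim W^{r+1}_d(C) < \rho(g,r,d)$ you want in step (4) can simply fail.

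The argument in \cite{ACGH} is elementary and avoids dimension counts altogether. Suppose a component $X$ of $W^r_d(C)$ lies in $W^{r+1}_d(C)$. Then every $L\in X$ has $h^0(L)\geq r+2$, hence $h^0(L(p-q))\geq r+1$ for all $p,q\in C$; therefore the irreducible set $\bigcup_{p,q\in C}\{L(p-q): L\in X\}$ is contained in $W^r_d(C)$ and contains $X$ (take $p=q$), so by maximality of the component it equals $X$. Thus $X$ is invariant under translation by all classes $p-q$, which generate $\mathrm{Pic}^0(C)$, forcing $X=\mathrm{Pic}^d(C)$. But then the generic line bundle of degree $d$ would satisfy $h^0\geq r+2$, and Riemann--Roch gives $d-g+1\geq r+2$, i.e.\ $g-d+r\leq -1$, contradicting the hypothesis $g-d+r\geq 0$. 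If you want to repair your write-up, replace both of your routes by this translation-invariance argument.
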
  

In order to ensure that the vector bundles $\mathcal{E}$ we construct using (\ref{firstelmod}) are simple, we will need the following result, which is based on the calculation of the tangent space to $W^r_d(C)$ at a point $L \in W^{r + 1}_d(C)$.

\begin{prop}
\label{tanspace}
[Proposition 4.2 (i) in Chapter IV of \textit{loc.~cit.}]
Let $L$ be a point of $W^{r}_d(C)$ which is not contained in $W^{r+1}_d(C).$  Then $W^r_d(C)$ is smooth of the expected dimension $\rho$ at $L$ if and only if the cup-product map
\begin{equation}
\mu : H^0(L) \otimes H^0(\omega_{C} \otimes L^{-1}) \rightarrow H^0(\omega_C)
\end{equation} 
is injective. \hfill \qedsymbol
\end{prop}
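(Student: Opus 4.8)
The statement to prove is Proposition \ref{tanspace}: for $L \in W^r_d(C) \setminus W^{r+1}_d(C)$, smoothness of $W^r_d(C)$ at $L$ with the expected dimension $\rho$ is equivalent to injectivity of the Petri map $\mu : H^0(L) \otimes H^0(\omega_C \otimes L^{-1}) \to H^0(\omega_C)$. The plan is to identify the Zariski tangent space to $W^r_d(C)$ at $L$ and compare its dimension with $\rho$, using that $\dim W^r_d(C) \geq \rho$ everywhere (Theorem \ref{expdim}) so that smoothness of the expected dimension is the same as the tangent space having dimension exactly $\rho$.

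**Step 1: the determinantal description and its tangent space.** First I would realize $W^r_d(C)$ locally as a determinantal locus. Fix an effective divisor $D$ of large degree $m$, so that $h^1(L(D)) = 0$ for all $L \in \mathrm{Pic}^d(C)$, and consider the exact sequence $0 \to L \to L(D) \to L(D)|_D \to 0$. Pushing this to a universal family over $\mathrm{Pic}^d(C)$ produces a map of vector bundles $\gamma : H^0(L(D)) \otimes \mathcal{O} \to H^0(L(D)|_D) \otimes \mathcal{O}$ (after twisting by a suitable line bundle on $\mathrm{Pic}^d(C)$) whose fiber at $L$ has kernel $H^0(L)$; then $W^r_d(C)$ is cut out by the condition $\mathrm{rank}(\gamma) \leq m - r$. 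The tangent space to a determinantal scheme at a point $L$ where $\gamma$ has corank exactly $r+1$ (which holds since $L \notin W^{r+1}_d(C)$) is computed by the standard formula: it is the preimage under $d\gamma$ of $\mathrm{Hom}(\ker \gamma, \mathrm{coker}\, \gamma)$, so that
\[
T_L W^r_d(C) = \left\{ v \in T_L \mathrm{Pic}^d(C) \ : \ (d\gamma)(v) \ \text{maps} \ \ker \gamma_L \ \text{into} \ \mathrm{im}\, \gamma_L \right\}.
\]

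**Step 2: identifying the cokernel of the map on tangent spaces.** Next I would make the three identifications that turn this into the Petri map. We have $T_L \mathrm{Pic}^d(C) \cong H^1(\mathcal{O}_C)$, and $\ker \gamma_L \cong H^0(L)$, while $\mathrm{coker}\,\gamma_L \cong H^1(L)$. Under these identifications, the composite $H^1(\mathcal{O}_C) \xrightarrow{d\gamma} \mathrm{Hom}(H^0(L), H^1(L))$ is (up to sign) the map induced by cup product $H^1(\mathcal{O}_C) \otimes H^0(L) \to H^1(L)$, i.e. multiplication by the tangent vector in $H^1(\mathcal{O}_C)$. Therefore $T_L W^r_d(C)$ is the annihilator, inside $H^1(\mathcal{O}_C)$, of the image of the cup-product map $H^0(L) \otimes H^0(\omega_C \otimes L^{-1}) \to H^0(\omega_C) \cong H^1(\mathcal{O}_C)^\vee$ — that is, $T_L W^r_d(C) = (\mathrm{im}\,\mu)^\perp$. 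By Serre duality and linear algebra, $\dim T_L W^r_d(C) = g - \dim(\mathrm{im}\,\mu) = g - \big( (r+1)(g-d+r) - \dim \ker \mu \big) = \rho + \dim \ker \mu$, using $h^0(L) = r+1$ and $h^0(\omega_C \otimes L^{-1}) = h^1(L) = g - d + r$ by Riemann–Roch.

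**Step 3: conclude.** Finally, since every component through $L$ has dimension $\geq \rho$ (Theorem \ref{expdim}) and $\dim T_L W^r_d(C) = \rho + \dim\ker\mu \geq \dim_L W^r_d(C) \geq \rho$, we get that $W^r_d(C)$ is smooth of dimension $\rho$ at $L$ if and only if $\dim T_L W^r_d(C) = \rho$, which by the displayed dimension count happens if and only if $\ker \mu = 0$, i.e. $\mu$ is injective. I expect the main obstacle to be Step 1–2: carefully setting up the universal (Poincaré) bundle and the twists needed to globalize the local determinantal presentation, and then verifying that the differential $d\gamma$ really is cup product with the Kodaira–Spencer class. This is a deformation-theoretic bookkeeping computation — standard but the place where sign and identification errors creep in; everything after it is elementary linear algebra combined with the already-quoted lower bound on dimension. (In the paper this is of course quoted directly from \cite{ACGH}, so one may also simply cite Chapter IV, Proposition 4.2(i) there and omit the argument.)
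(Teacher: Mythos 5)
Your argument is correct and is essentially the standard proof of this result, which the paper itself does not reprove but quotes directly from Arbarello--Cornalba--Griffiths--Harris (Chapter IV, Proposition 4.2(i)): the determinantal presentation via an auxiliary divisor $D$, the identification $T_L W^r_d(C) = (\mathrm{im}\,\mu)^{\perp}$ under Serre duality, and the dimension count $\dim T_L W^r_d(C) = \rho + \dim\ker\mu$ combined with the lower bound $\dim_L W^r_d(C) \geq \rho$ is exactly the cited argument. Only a bookkeeping slip: the determinantal condition should read $\mathrm{rank}(\gamma) \leq d + m - g - r$ (equivalently corank at least $r+1$ in the fiber), not $\mathrm{rank}(\gamma)\leq m-r$; this does not affect the rest of the proof.
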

\noindent 


The most recent result in this subsection, which is due to Aprodu-Farkas, implies that Green's Conjecture on syzygies of canonical curves holds for any smooth curve $C$ on a K3 surface which achieves equality in (\ref{cliffgon}); see \cite{AF} for details. 

\begin{thm}
\label{apfarthm}
[Theorem 3.2, \textit{loc.~cit.}] Let $C$ be a smooth curve on a $K3$ surface with ${\rm gon}(C)=k, {\rm Cliff}(C)=k-2,$ and $\rho(g,1,k) \leq 0$ such that the linear system $|C|$ is basepoint-free. If $d \leq g - k + 2$, then every irreducible component of $W^1_d(C)$ has dimension at most $d-k.$ \hfill \qedsymbol
\end{thm}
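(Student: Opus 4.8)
The plan is to prove the statement directly, but since Theorem \ref{apfarthm} is quoted from Aprodu--Farkas, what I would actually write here is a proof of how the hypotheses feed into their machinery, together with an explanation of why the bound $d-k$ is the natural one. The starting point is the observation that for a curve $C$ on a K3 surface $X$ with $|C|$ basepoint-free, every line bundle in $W^1_d(C)$ of suitably small degree extends, via the construction in \eqref{firstelmod}--\eqref{zeroserre}, to a vector bundle (or Lazarsfeld--Mukai sheaf) on $X$; the Brill-Noether geometry of $C$ is thereby controlled by the Mukai pairing on $X$. Concretely, given a basepoint-free pencil $A \in W^1_d(C)$, one forms the Lazarsfeld--Mukai bundle $E_{C,A}$ as the dual of the kernel of the evaluation $\mathcal{O}_X^2 \to A$ (extended by zero from $C$ to $X$), which has rank $2$, $\det E_{C,A} = \mathcal{O}_X(C)$, and $c_2(E_{C,A}) = d$; its cohomology computes $h^0(A)$ and the Clifford index of $A$.

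The key steps, in order, would be: (1) Translate a family of pencils in a component of $W^1_d(C)$ into a family of Lazarsfeld--Mukai bundles on $X$, and use that $|C|$ is basepoint-free to guarantee the bundles are genuinely rank $2$ and not destabilized trivially. (2) Use the hypothesis $\mathrm{gon}(C)=k$, $\mathrm{Cliff}(C)=k-2$, and $\rho(g,1,k)\le 0$ to pin down the Mukai vector of $E_{C,A}$ and to show, via Mukai's theory (Theorem \ref{simpsmooth} and its refinements), that the moduli of such bundles is well-behaved — in particular that a generic $E_{C,A}$ in the family is simple or decomposes in a controlled way. The Clifford-index hypothesis is exactly what excludes the pathological decompositions $E_{C,A} = L_1 \oplus L_2$ or nontrivial extensions of line bundles that would otherwise inflate $\dim W^1_d(C)$. (3) Use the degree constraint $d \le g-k+2$ to control $h^0$ of the relevant twists, so that the fibers of the map from pencils to Mukai bundles (or from the bundle back to the $g^1_d$'s it carries, i.e. the projectivized $W^1_d$ of $C$ sitting inside the bundle's restriction data) have the expected dimension. (4) Assemble a dimension count: the moduli space of the relevant Lazarsfeld--Mukai bundles has dimension governed by $\dim \mathrm{Spl}$, i.e. $1-\chi(E \otimes E^\vee)$, and each such bundle carries at most a bounded-dimensional family of pencils; combining these bounds yields $\dim W^1_d(C) \le d-k$ for every irreducible component.

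The main obstacle is step (2): ruling out the degenerate Lazarsfeld--Mukai sheaves. A component of $W^1_d(C)$ of dimension exceeding $d-k$ would force the generic associated bundle to be non-simple, and one must show this non-simplicity propagates to a splitting (or filtration) whose pieces violate either the Clifford-index equality $\mathrm{Cliff}(C)=\mathrm{gon}(C)-2$ or the basepoint-freeness of $|C|$. This is precisely where the Aprodu--Farkas refinement of Green's conjecture machinery does the heavy lifting, and it is the step for which one genuinely needs $C$ to lie on a K3 surface rather than on an arbitrary surface — the Mukai pairing and the deformation theory of Theorem \ref{simpsmooth} are what make the degenerate locus small enough. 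Everything else (the Lazarsfeld--Mukai construction, the numerical identities for $c_1, c_2, \chi$, and the final dimension tally) is bookkeeping once that structural input is in place; for our application we will only invoke Theorem \ref{apfarthm} as a black box, with $k = \mathrm{gon}(C)$ supplied by Basili's Theorem \ref{basili}.
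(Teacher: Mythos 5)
The first thing to say is that the paper does not prove this statement at all: Theorem \ref{apfarthm} is imported verbatim from Aprodu--Farkas (it is stated with a \qedsymbol and a citation, and is used downstream only in the proof of Proposition \ref{lbexist} to bound $\dim W^1_{13}(C)$), so there is no internal argument to compare yours against. Your closing remark that you will invoke the theorem as a black box, with $k={\rm gon}(C)$ supplied by Basili's Theorem \ref{basili}, is therefore exactly how the paper itself proceeds, and for the purposes of this paper that is the right posture.

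Judged as a proof attempt, however, your outline has a gap precisely at the point you flag. Steps (1), (3) and (4) are indeed bookkeeping: the Lazarsfeld--Mukai bundle $E_{C,A}$ of a base-point-free pencil $A\in W^1_d(C)$ has rank $2$, $\det E_{C,A}=\mathcal{O}_X(C)$, $c_2=d$, and the count $\dim{\rm Spl}=1-\chi(E\otimes E^{\vee})$ together with the fibre dimension of the map sending $(C',A')$ to $E_{C',A'}$ gives the bound $d-k$ when the generic bundle in the family is simple. But the entire content of the Aprodu--Farkas theorem is the analysis of the non-simple case: one must show that a component of $W^1_d(C)$ of dimension exceeding $d-k$ forces the generic associated Lazarsfeld--Mukai sheaf into an extension of twisted ideal sheaves $\mathcal{I}_{\xi}(M)$, $\mathcal{I}_{\eta}(N)$ with $M+N\sim C$, and then derive from the restrictions $M|_C$, $N|_C$ a contribution to ${\rm Cliff}(C)$ or ${\rm gon}(C)$ contradicting ${\rm Cliff}(C)=k-2$, ${\rm gon}(C)=k$ and $\rho(g,1,k)\le 0$ (this is also where the hypotheses $d\le g-k+2$ and the base-point-freeness of $|C|$ are actually consumed). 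You name this obstacle but do not carry it out, and you explicitly delegate it back to \cite{AF}; in addition, pencils in $W^1_d(C)$ with base points fall outside the construction and need the separate reduction (via the addition maps $C\times W^1_{d-1}(C)\to W^1_d(C)$) that Aprodu--Farkas handle by induction. So the proposal is a correct description of the strategy and a legitimate justification for using the result as quoted, but it is not a proof of the statement.
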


In Section \ref{badbundles}, it will be useful for us to have the following upper bound on the dimension of $W^r_d(C).$ 

\begin{thm}
\label{martens}
[Theorems 5.1 and 5.2 in Chapter IV of \cite{ACGH}] Let $d,r$ be integers satisfying $2 \leq d \leq g-2, \hskip5pt d \geq 2r > 0.$  If $C$ is neither hyperelliptic, trigonal, bielliptic nor a smooth plane quintic, then every component of $W^r_d(C)$ has dimension at most $d-2r-2.$ \hfill \qedsymbol
\end{thm}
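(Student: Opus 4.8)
The plan is to bound, at a general point $L$ of an irreducible component $W$ of $W^r_d(C)$, the Zariski tangent space of $W^r_d(C)$, and to extract the sharp estimate from Clifford's theorem in the refined form that singles out precisely the excluded curves. I would treat the bound $d-2r-1$ (Martens' theorem, needing only that $C$ is non-hyperelliptic) as an intermediate step and then gain the final unit by analyzing the equality case.

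First I would reduce to the case $W \not\subseteq W^{r+1}_d(C)$: since $d \le g-2$ we have $g-d+r \ge r+2 > 0$, so Lemma \ref{strictinc} applies and a general $L \in W$ has $h^0(L) = r+1$ exactly, while $h^0(\omega_C \otimes L^{-1}) = h^1(L) = g-d+r \ge 2$ by Riemann--Roch. Then I would invoke the tangent-space computation underlying Proposition \ref{tanspace}, in the form $\dim W = \dim_L W^r_d(C) \le \dim T_L W^r_d(C) = g - \mathrm{rk}(\mu_L)$, where $\mu_L : H^0(L) \otimes H^0(\omega_C \otimes L^{-1}) \to H^0(\omega_C)$ is the cup-product map. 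So it suffices to show $\mathrm{rk}(\mu_L) \ge g-d+2r+2$; the elementary bound $\mathrm{rk}(\mu_L) \ge h^0(L)+h^0(\omega_C\otimes L^{-1})-1 = g-d+2r$ only yields $\dim W^r_d(C) \le d-2r$, so the entire difficulty lies in squeezing out the two extra units of rank.

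The engine is the base-point-free pencil trick: for a base-point-free pencil $V=\langle s_0,s_1\rangle \subseteq H^0(M)$ and any line bundle $A$, tensoring $0 \to M^{-1}\to\mathcal{O}_C^2\to M\to 0$ by $A$ gives $\ker\big(V\otimes H^0(A)\to H^0(A\otimes M)\big)\cong H^0(A\otimes M^{-1})$. After stripping base points from $|L|$ and from $|\omega_C\otimes L^{-1}|$ — a reduction that only improves the desired inequality — I would let $M$ range over suitable pencils inside $H^0(\omega_C\otimes L^{-1})$, take $A=L$, organize the resulting subspaces of $H^0(\omega_C)$ into a filtration, and bound the successive kernels. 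Each such kernel is the space of global sections of a special line bundle of small (possibly negative) degree built out of $L$ and $\omega_C$; Clifford's theorem bounds its $h^0$, and for $C$ non-hyperelliptic the bound is strict (the extremal bundle would have to be trivial, canonical, or a multiple of a $g^1_2$). Carrying out the bookkeeping gives $\mathrm{rk}(\mu_L)\ge g-d+2r+1$, hence $\dim W^r_d(C)\le d-2r-1$ as in Theorem 5.1, Chapter IV of \cite{ACGH}.

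To gain the last unit one analyzes equality. If $\dim W^r_d(C)=d-2r-1$ then every inequality above is tight; in particular some of the special line bundles occurring as kernels realize Clifford's bound up to the available slack, which forces the Clifford index of $C$ to be computed by a pencil of very small degree, and the classical case analysis of such extremal series shows that $C$ must be trigonal, bielliptic, or a smooth plane quintic (Theorem 5.2, Chapter IV of \cite{ACGH}). Excluding those curves, together with the hyperelliptic ones removed at the outset, therefore yields $\dim W^r_d(C)\le d-2r-2$. I expect this final step — the near-extremal Clifford analysis and the precise identification of the exceptional curves — to be the main obstacle: the reductions and the pencil-trick bookkeeping, while not purely mechanical, follow a predictable pattern, whereas pinning down exactly which geometric configurations saturate the estimate is delicate and is where the hypotheses on $C$ are genuinely consumed.
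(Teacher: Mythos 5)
Note first that the paper does not prove this statement: it is quoted, with the bracketed attribution, as the combination of Martens' theorem (Theorem 5.1) and Mumford's theorem (Theorem 5.2) in Chapter IV of \cite{ACGH}, and the \qedsymbol{} marks a citation, not an argument. So the only benchmark is the classical proof in that reference, which proceeds differently from your sketch: ACGH argue geometrically, by induction on $r$, subtracting general points to reduce to families of pencils and then analyzing those families (for the Mumford refinement, via the map to a quadric induced by two pencils), rather than through the tangent-space computation of Proposition \ref{tanspace}. Your reduction to $h^0(L)=r+1$ via Lemma \ref{strictinc} and the inequality $\dim W\le g-\mathrm{rk}(\mu_L)$ are fine as far as they go.

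The genuine gaps are exactly at the two places where the content lies. (1) The claimed estimate $\mathrm{rk}(\mu_L)\ge g-d+2r+2$ is never established. What the base-point-free pencil trick actually yields, for a pencil $V$ inside the moving part of $|L|$, is $\mathrm{rk}(\mu_L)\ge 2h^1(L)-h^0\bigl(\omega_C\otimes L^{-2}(B)\bigr)$ with $B$ the base locus; plain Clifford bounds the last term by $g-d+\tfrac{1}{2}\deg B$, which only returns $\dim\le d-2r$. With the strict form of Clifford for non-hyperelliptic curves one can (after your base-point reduction) push this to $d-2r-1$, i.e.\ Martens' bound, but the proposed ``filtration over pencils $M\subseteq H^0(\omega_C\otimes L^{-1})$'' that is supposed to produce the further unit is not specified, and no computation in the sketch produces it. (2) The passage from $d-2r-1$ to $d-2r-2$ is precisely Mumford's theorem, and you defer it wholesale to ``the classical case analysis.'' Moreover, the mechanism you indicate is not the right one: equality in the tangent-space chain forces $h^0(\omega_C\otimes L^{-2})$ to sit one below the Clifford maximum, and classifying the curves carrying such sub-extremal bundles is not a Clifford-index bookkeeping exercise --- smooth plane sextics and general tetragonal curves have the same Clifford index as bielliptic curves yet are not exceptions, while bielliptic curves are; so ``the Clifford index is computed by a pencil of very small degree'' cannot be the conclusion that tightness forces. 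Identifying exactly the trigonal, bielliptic and plane quintic cases is the substance of Theorem 5.2 of \cite{ACGH}; asserting its outcome is citing the theorem, not proving it.
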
 

\subsection{Constructing weakly Ulrich bundles from line bundles on curves}
For the rest of this section, $X$ denotes a smooth quartic surface in $\mathbb{P}^3.$  Our first result gives a list of sufficient conditions on a degree-14 line bundle $\mathcal{L}$ for the rank-2 vector bundle $\mathcal{E}$ in (\ref{firstelmod}) to be simple, weakly Ulrich and globally generated in codimension 1.

\begin{prop}
\label{weaklyequiv}
Let $C \in |\mathcal{O}_X(3)|$ be a smooth curve, and let $\mathcal{L}$ be a line bundle of degree 14 on $C$ satisfying the following conditions:
\begin{itemize}
\item[(i)]{$\mathcal{L}$ is basepoint-free.}
\item[(ii)]{$h^0(\mathcal{L})=2.$}
\item[(iii)]{$h^0(\mathcal{L}(-1))=0.$}
\item[(iv)]{$h^0(\mathcal{L}^{-2}(3))=0.$}
\end{itemize}
Then the rank-2 vector bundle $\mathcal{E}$ constructed from the sequence
\begin{equation}
\label{elmod}
0 \rightarrow \mathcal{E}^{\vee} \rightarrow H^0(\mathcal{L}) \otimes \mathcal{O}_X \rightarrow \mathcal{L} \rightarrow 0
\end{equation}
satisfies the following properties:
\begin{itemize}
\item[(a)]{$c_1(\mathcal{E})=3H$ and $c_2(\mathcal{E})=14.$}
\item[(b)]{$\mathcal{E}$ is weakly Ulrich.}
\item[(c)]{$\mathcal{E}$ is globally generated in codimension 1.}
\item[(d)]{$\mathcal{E}$ is simple.}
\end{itemize}
\end{prop}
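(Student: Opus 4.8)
The plan is to dualize the defining sequence \eqref{elmod}, compute Chern classes directly, and then bootstrap between the sequence on $X$, the twisted ideal-sheaf sequence on $C$, and the structure sequence of $C$ in $X$ to control cohomology. First I would record that since $\mathcal{L}$ is basepoint-free with $h^0(\mathcal{L})=2$, the evaluation map $H^0(\mathcal{L})\otimes\mathcal{O}_X\to\mathcal{L}$ is surjective, so its kernel $\mathcal{E}^{\vee}$ is a rank-$2$ reflexive sheaf, hence locally free on the surface $X$; dualizing gives $0\to\mathcal{O}_X^2\to\mathcal{E}\to\mathcal{E}\mathit{xt}^1(\mathcal{L},\mathcal{O}_X)\to 0$, and since $\mathcal{L}$ is a line bundle on $C\in|\mathcal{O}_X(3)|$ one identifies $\mathcal{E}\mathit{xt}^1(\mathcal{L},\mathcal{O}_X)\cong\mathcal{L}^{\vee}\otimes\omega_C\otimes\omega_X^{-1}$, which by adjunction ($\omega_X\cong\mathcal{O}_X$, $\omega_C\cong\mathcal{O}_C(3)$) is a line bundle on $C$ of degree $3\cdot 12-14=22$. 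For part (a), from the sequence for $\mathcal{E}^{\vee}$ and additivity of the Chern character I get $c_1(\mathcal{E}^\vee)=-c_1(\mathcal{O}_C(C))=-3H$ hence $c_1(\mathcal{E})=3H$, and $c_2$ is pinned down by $\chi(\mathcal{E})$: the sequence gives $\chi(\mathcal{E})=2\chi(\mathcal{O}_X)-\chi(\mathcal{E}^\vee(-3))$ wait — cleaner is to use $c_2(\mathcal{E})=\deg\mathcal{L}=14$, which follows because $c_2$ of the rank-$2$ bundle built from \eqref{firstelmod} is the length of the degeneracy locus, i.e. $\deg$ of the line bundle cokernel on $C$; so $c_2(\mathcal{E})=14$.

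Next I would establish (b), weak Ulrichness, which for a surface ($n=3$) means $H^1(\mathcal{E})=H^1(\mathcal{E}(-1))=0$ and $H^2(\mathcal{E}(-2))=H^2(\mathcal{E}(-3))=H^1(\mathcal{E}(-2))=0$ — more precisely the conditions $H^j(\mathcal{E}(-m))=0$ for $(j,m)$ with $1\le j\le 2,\ m\le j-1$ and $0\le j\le 1,\ m\ge j+2$. By Serre duality on the K3 surface, $H^2(\mathcal{E}(-m))\cong H^0(\mathcal{E}^{\vee}(m))^{\vee}$, so the $j=2$ vanishings translate into $H^0(\mathcal{E}^\vee(-2))=H^0(\mathcal{E}^\vee(-3))=0$, which I read off from \eqref{elmod} twisted down: $\mathcal{E}^\vee(-m)\hookrightarrow\mathcal{O}_X(-m)^2$ has no sections for $m\ge 1$. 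The remaining groups $H^1(\mathcal{E})$, $H^1(\mathcal{E}(-1))$, $H^1(\mathcal{E}(-2))$, $H^1(\mathcal{E}(-3))$ — note $H^1(\mathcal{E}(-3))\cong H^1(\mathcal{E}^\vee(3))^\vee$ by duality — I control using the dual sequence $0\to\mathcal{E}^\vee\to\mathcal{O}_X^2\to\mathcal{L}\to 0$: twisting by $\mathcal{O}_X(m)$ and taking cohomology, $H^1(\mathcal{E}^\vee(m))$ sits between $H^0(\mathcal{O}_X(m))^2\to H^0(\mathcal{L}(m))$ and $H^1(\mathcal{O}_X(m))^2=0$, so it suffices that $H^0(\mathcal{O}_X(m))^2\to H^0(\mathcal{L}(m))$ be surjective for the relevant $m$; for $m=0$ this is $h^0(\mathcal{L})=2$ plus surjectivity (both sides dimension $2$, and the map is injective since $\mathcal{L}$ has no sections pulled back from the point — this is hypothesis (ii)); hypothesis (iii) $h^0(\mathcal{L}(-1))=0$ and hypothesis (iv) $h^0(\mathcal{L}^{-2}(3))=0$ feed the twists $\mathcal{E}(-1)$ and $\mathcal{E}(-2)$ respectively, the latter via $\mathcal{E}(-2)\cong\mathcal{E}^\vee(1)$ (using $c_1(\mathcal{E})=3H$, so $\mathcal{E}^\vee\cong\mathcal{E}(-3)$) and the restriction-to-$C$ exact sequence relating $\mathcal{L}(1)$ and $\mathcal{L}^{-2}(3)=\mathcal{L}^\vee\otimes\omega_C$, i.e. Serre duality on $C$. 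I would assemble these into the full list of vanishings; this bookkeeping across the two sequences and two Serre dualities is where the real care is needed and is the main obstacle.

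For (c), global generation in codimension $1$: $\mathcal{E}$ fails to be globally generated exactly along the locus where the cokernel $\mathcal{L}^\vee\otimes\omega_C\otimes\omega_X^{-1}$ (a line bundle on $C$, but $C$ has codimension $1$!) — wait, I need instead the base locus of $H^0(\mathcal{E})$. From $0\to\mathcal{O}_X^2\to\mathcal{E}\to\mathcal{Q}\to 0$ with $\mathcal{Q}$ a line bundle on $C$ of degree $22$, $\mathcal{E}$ is globally generated away from the base locus of $\mathcal{Q}$ together with any point where $H^0(\mathcal{E})\to\mathcal{Q}$ fails to surject; since $\mathcal{Q}$ has degree $22 > 2g(C)$? — here $g(C)=\tfrac{1}{2}\cdot 12\cdot(12+\tfrac{4-4}{?})$; in any case $C$ is a $(4,3)$ complete intersection of genus $g=1+\tfrac12\cdot 12\cdot(4+3-4)=1+18=19$, so $\deg\mathcal{Q}=22<2g-2=36$ and direct global generation of $\mathcal{Q}$ is not automatic, but the base locus of $|\mathcal{Q}|$ is finite, hence codimension $1$ in $X$ already at worst, and I must also check that $H^0(\mathcal{E})\to H^0(\mathcal{Q})$ is surjective, which follows from $H^1(\mathcal{O}_X)=0$. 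Thus $\mathcal{E}$ is globally generated off a finite set, which is codimension $2$ in $X$, so certainly in codimension $1$. Finally (d), simplicity: I would compute $\mathrm{Hom}(\mathcal{E},\mathcal{E})=H^0(\mathcal{E}\otimes\mathcal{E}^\vee)$ by tensoring $0\to\mathcal{O}_X^2\to\mathcal{E}\to\mathcal{Q}\to 0$ with $\mathcal{E}^\vee$, reducing to $H^0(\mathcal{E}^\vee)^2$ — which is $0$ since $\mathcal{E}^\vee\subset\mathcal{O}_X^2$ has sections only from constants mapping to $0$ in $\mathcal{L}$, i.e. $H^0(\mathcal{E}^\vee)=0$ — and $H^0(\mathcal{Q}\otimes\mathcal{E}^\vee)$, where $\mathcal{Q}\otimes\mathcal{E}^\vee$ is supported on $C$ and one uses that $\mathcal{E}^\vee|_C$ relates back via \eqref{elmod} restricted to $C$ to the pencil $\mathcal{L}$; tracking through, a nonscalar endomorphism would force a nonzero map $\mathcal{L}\to$ (rank-$1$ sub of $\mathcal{L}$) or a splitting incompatible with hypothesis (ii) that $h^0(\mathcal{L})=2$ and (iii)/(iv). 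So $h^0(\mathcal{E}\otimes\mathcal{E}^\vee)=1$ and $\mathcal{E}$ is simple. I expect the dimension count $g(C)=19$, $\deg\mathcal{Q}=22$ to recur throughout, and the delicate point to remain the systematic verification of all eight cohomology vanishings in (b) from the four hypotheses via the two short exact sequences and Serre duality on $X$ and on $C$.
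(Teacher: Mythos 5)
Parts (a) and (c) of your proposal are fine and essentially reproduce the paper's argument (Chern classes from the elementary modification, and global generation away from the finite base locus of the degree-22 line bundle $\mathcal{L}^{-1}(3)$ on $C$, using $H^1(\mathcal{O}_X)=0$). The real problems are in (b). First, your translation of Definition \ref{weakdef} is off: for $n=3$ the required vanishings are $H^0(\mathcal{E}(t))=0$ for $t\le -2$, $H^2(\mathcal{E}(t))=0$ for $t\ge -1$, and $H^1(\mathcal{E}(t))=0$ for $t\ge 0$ and $t\le -3$. The groups $H^1(\mathcal{E}(-1))$ and $H^1(\mathcal{E}(-2))$, which you place on your list and propose to kill using (iii) and (iv), are \emph{not} part of weak Ulrichness, and they do \emph{not} follow from hypotheses (i)--(iv): they are precisely the extra conditions separating Ulrich from weakly Ulrich (Proposition \ref{quadriculrich}), and Section \ref{badbundles} exhibits line bundles satisfying (i)--(iv) whose associated $\mathcal{E}$ has $H^1(\mathcal{E}(-1))\neq 0$. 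So that portion of your plan cannot be completed, however the bookkeeping is arranged. Second, the vanishing that \emph{is} required and nontrivial, $H^0(\mathcal{E}(-2))=0$ (equivalently $H^2(\mathcal{E}(-1))=0$, i.e. $H^0(\mathcal{E}^{\vee}(1))=0$ via $\mathcal{E}^{\vee}\cong\mathcal{E}(-3)$ and Serre duality), is not proved by your argument: the inclusion $\mathcal{E}^{\vee}(-m)\hookrightarrow\mathcal{O}_X(-m)^{2}$ only yields vanishing when the ambient twist has no sections, and at the critical twist the ambient bundle is $\mathcal{O}_X(1)^{2}$. The correct short argument is to twist the dual sequence (\ref{dualtwist1}) by $-2$, giving $H^0(\mathcal{E}(-2))\subseteq H^0(\mathcal{L}^{-1}(1))$ with $\deg\mathcal{L}^{-1}(1)=-2<0$ (equivalently, by the base-point-free pencil trick the kernel of $H^0(\mathcal{L})\otimes H^0(\mathcal{O}_X(1))\to H^0(\mathcal{L}(1))$ is $H^0(\mathcal{L}^{-1}(1))=0$, using (i) and (ii)). You also never treat $H^1(\mathcal{E}(m))$ for $m\ge 1$ (equivalently $m\le -4$), which is required; as in (\ref{cokerisom}) these reduce to $H^0(\mathcal{L}(-m))=0$, which is hypothesis (iii) when $m=1$ and a degree count when $m\ge 2$ --- in particular (iii) is consumed by the twist $m=1$, not by $\mathcal{E}(-1)$ as you indicate.

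For (d), your opening move (tensor the dual sequence with $\mathcal{E}^{\vee}$, use $H^0(\mathcal{E}^{\vee})=0$, and reduce to sections of $\mathcal{E}^{\vee}|_C\otimes\mathcal{L}^{-1}(3)$) matches the paper, but the conclusion ``a nonscalar endomorphism would force a splitting incompatible with (ii), (iii)/(iv)'' is a gesture, not a proof, and it is exactly here that hypothesis (iv) must act. What is missing: restrict (\ref{elmod}) to $C$, which produces a torsion term isomorphic to $\mathcal{L}(-3)$ and the four-term sequence $0\to\mathcal{L}(-3)\to\mathcal{E}^{\vee}|_C\to H^0(\mathcal{L})\otimes\mathcal{O}_C\to\mathcal{L}\to 0$; split off $0\to\mathcal{L}(-3)\to\mathcal{E}^{\vee}|_C\to\mathcal{L}^{-1}\to 0$ and twist by $\mathcal{L}^{-1}(3)$ to obtain $0\to\mathcal{O}_C\to\mathcal{E}^{\vee}|_C\otimes\mathcal{L}^{-1}(3)\to\mathcal{L}^{-2}(3)\to 0$. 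Hypothesis (iv) then forces $h^0\bigl(\mathcal{E}^{\vee}|_C\otimes\mathcal{L}^{-1}(3)\bigr)=1$, hence $h^0(\mathcal{E}\otimes\mathcal{E}^{\vee})=1$. Without identifying this extension (and the Tor term), the simplicity claim is unsupported.
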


\begin{proof}
A Chern class computation applied to (\ref{elmod}) shows that $c_1(\mathcal{E})=3H$ and $c_2(\mathcal{E})=14,$ i.e. that property (a) is satisfied.  To show that property (b) is satisfied, we need to check the following vanishings:
\begin{itemize}
\item[(0)]{$H^0(\mathcal{E}(m))=0$ for $m \leq -2$}
\item[(1)]{$H^1(\mathcal{E}(m))=0$ for $m \leq -3$ and $m \geq 0$}
\item[(2)]{$H^2(\mathcal{E}(m))=0$ for $m \geq -1$}.
\end{itemize}
Since $\mathcal{E}$ is of rank 2 with determinant $\mathcal{O}_X(3),$ we have for each $m \in \mathbb{Z}$ that $\mathcal{E}(m) \cong \mathcal{E}^{\vee}(m+3),$ so Serre duality implies that
\begin{equation}
\label{serreisoms}
H^i(\mathcal{E}(m)) \cong H^i(\mathcal{E}^{\vee}(m+3)) \cong H^{2-i}(\mathcal{E}(-m-3))^{\ast}
\end{equation}
In particular, (0) and (2) are equivalent, and the vanishing of $H^1(\mathcal{E}(m))$ for all $m \leq -3$ is equivalent to its vanishing for all $m \geq 0.$  Therefore we only need to verify (0) and the vanishing of $H^1(\mathcal{E}(m))$ for $m \geq 0.$

Dualizing (\ref{elmod}) gives the sequence
\begin{equation}
\label{dualtwist1}
0 \rightarrow H^0(\mathcal{L})^{\ast} \otimes \mathcal{O}_{X} \rightarrow \mathcal{E} \rightarrow \mathcal{L}^{-1}(3) \rightarrow 0
\end{equation}

To show that (0) holds, it suffices to check that $H^0(\mathcal{E}(-2))=0.$  Twisting (\ref{dualtwist1}) by -2 and taking cohomology, we have that $H^0(\mathcal{E}(-2))$ is contained in $H^0(\mathcal{L}^{-1}(1)).$  Since $\mathcal{L}^{-1}(1)$ is of degree -2, we have $H^0(\mathcal{L}^{-1}(1))=0,$ so $H^0(\mathcal{E}(-2))=0$ as claimed.

To show that $H^1(\mathcal{E}(m))=0$ for $m \geq 0,$ it is more convenient to work with (\ref{elmod}).  Another application of Serre duality, together with taking cohomology of the $(-m)$-th twist of (\ref{elmod}), shows that
\begin{equation}
\label{cokerisom}
H^1(\mathcal{E}(m))^{\ast} \cong H^1(\mathcal{E}^{\vee}(-m)) \cong \textnormal{coker}(H^0(\mathcal{L}) \otimes H^0(\mathcal{O}_X(-m)) \rightarrow H^0(\mathcal{L}(-m)))
\end{equation}
The multiplication map on the right-hand side is an isomorphism when $m=0,$ and when $m > 0,$ we have that $H^0(\mathcal{L}(-m))=0$ by (iii).  In any case, it follows that the map is surjective, i.e. that $h^1(\mathcal{E}(m))=0.$  This concludes the proof that $\mathcal{E}$ satisfies property (b).

As for property (c), (\ref{dualtwist1}) implies that $\mathcal{E}$ is globally generated away from the base locus of $\mathcal{L}^{-1}(3),$ e.g. that it is globally generated in codimension 1.  It remains to show that $\mathcal{E}$ is simple.

The fact that $\mathcal{E}$ is weakly Ulrich implies that $h^0(\mathcal{E}^{\vee})=h^2(\mathcal{E})=0$ and $h^1(\mathcal{E}^{\vee})=h^1(\mathcal{E})=0.$  Twisting (\ref{dualtwist1}) by $\mathcal{E}^{\vee}$ and taking cohomology, we obtain the isomorphism
\begin{equation}
\label{endisom1}
H^{0}(\mathcal{E} \otimes \mathcal{E}^{\vee}) \cong H^{0}(\mathcal{E}^{\vee}|_{C} \otimes \mathcal{L}^{-1}(3))
\end{equation}
Restricting (\ref{elmod}) to $C$ yields the long exact sequence
\begin{equation}
0 \rightarrow \mathcal{L}(-3) \rightarrow \mathcal{E}^{\vee}|_{C} \rightarrow H^{0}(\mathcal{L}) \otimes \mathcal{O}_{C} \rightarrow \mathcal{L} \rightarrow 0
\end{equation}
We may split this into the two short exact sequences
\begin{equation}
\label{split1}
0 \rightarrow \mathcal{L}(-3) \rightarrow \mathcal{E}^{\vee}|_{C} \rightarrow \mathcal{L}^{-1} \rightarrow 0
\end{equation}
\begin{equation}
\label{split2}
0 \rightarrow \mathcal{L}^{-1} \rightarrow H^0(\mathcal{L}) \otimes \mathcal{O}_C \rightarrow \mathcal{L} \rightarrow 0
\end{equation}
Twisting (\ref{split1}) by $\mathcal{L}^{-1}(3),$ we now have
\begin{equation}
0 \rightarrow \mathcal{O}_{C} \rightarrow \mathcal{E}^{\vee}|_{C} \otimes \mathcal{L}^{-1}(3) \rightarrow \mathcal{L}^{-2}(3) \rightarrow 0
\end{equation}
Since $H^0(\mathcal{L}^{-2}(3))=0$ by (iv), it follows that $h^0(\mathcal{E}^{\vee} \otimes \mathcal{L}^{-1}(3))=1.$  By (\ref{endisom1}), $\mathcal{E}$ is simple.
\end{proof}

The next result shows that the hypotheses of Proposition \ref{weaklyequiv} are not vacuous.

\begin{prop}
\label{lbexist}
If $C \in |\mathcal{O}_X(3)|$ is a general smooth curve, then there exists an irreducible component $\mathcal{W}$ of $W^1_{14}(C)$ whose general member satisfies conditions (i),(ii),(iii), and (iv) in the statement of Proposition \ref{weaklyequiv}.  Moreover, $\mathcal{W}$ is generically smooth of the expected dimension $\rho(19,1,14)=7.$
\end{prop}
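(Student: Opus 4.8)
The plan is to build the component $\mathcal{W}$ in four moves: pin down the relevant invariants of $C$ via Basili's theorem, show that the loci obstructing conditions (i)--(iii) are too small to fill up a component of $W^1_{14}(C)$, reduce the remaining assertions to the existence of one well-behaved point of $W^1_{14}(C)$, and exhibit such a point explicitly. First I would record the numerics. As a smooth complete intersection of $X$ with a cubic surface $Y$, the curve $C$ has $\deg C=12$, $\omega_C=\mathcal{O}_C(3)$ and $g(C)=19$. For $C$ general the maximum number $\ell$ of collinear points on $C$ equals $4$: a line $L\not\subset X$ meets $C$ in at most $\deg(L\cap X)\le 4$ points, a line $L\subset X$ meets $C=X\cap Y$ in at most $3$ points once $Y$ avoids the finitely many lines of $X$, and each of the $27$ lines of a general cubic surface $Y$ is a $4$-secant of $C$. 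Hence Theorem \ref{basili} gives ${\rm gon}(C)=12-4=8$ and, since $\deg C=12\neq 9$, ${\rm Cliff}(C)={\rm gon}(C)-2=6$; in particular $C$ satisfies the hypotheses of Theorem \ref{apfarthm} with $k=8$, since $\rho(19,1,8)=-5\le 0$ and $|C|=|\mathcal{O}_X(3)|$ is basepoint-free.

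Next I would do the Brill--Noether bookkeeping. Since $g+1-14=6\ge 0$, Theorem \ref{expdim} shows $W^1_{14}(C)\neq\emptyset$ with every component of dimension $\ge\rho:=\rho(19,1,14)=7$, and Lemma \ref{strictinc} shows no component lies in $W^2_{14}(C)$, so (ii) holds on a dense open subset of each component. A non-basepoint-free $\mathcal{L}\in W^1_{14}(C)$ is of the form $\mathcal{M}(p)$ with $\mathcal{M}\in W^1_{13}(C)$ and $p\in C$; applying Theorem \ref{apfarthm} with $d=13\le g-k+2=13$ bounds $\dim W^1_{13}(C)\le 5$, so the non-basepoint-free locus has dimension $\le 6<7$. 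Likewise $\{\mathcal{L}:h^0(\mathcal{L}(-1))\ge 1\}$ consists of the bundles $\mathcal{O}_C(1)(D)$ with $D$ effective of degree $14-12=2$, hence has dimension $\le 2<7$. Therefore on every component of $W^1_{14}(C)$ the general member satisfies (i), (ii) and (iii).

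It remains to produce a component that is generically smooth of dimension $7$, equivalently on which condition (iv) holds generically. For a basepoint-free $\mathcal{L}$ with $h^0(\mathcal{L})=2$, the base-point-free pencil trick identifies the kernel of the cup-product map $\mu$ of Proposition \ref{tanspace} with $H^0(\mathcal{L}^{-2}(3))$; by Proposition \ref{tanspace}, $W^1_{14}(C)$ is then smooth of dimension $7$ at $\mathcal{L}$ exactly when (iv) holds at $\mathcal{L}$. Since (iv) is an open condition, it suffices to exhibit one $\mathcal{L}_0\in W^1_{14}(C)$ with $h^0(\mathcal{L}_0)=2$ and $h^0(\mathcal{L}_0^{-2}(3))=0$: for $\mathcal{W}$ the component through $\mathcal{L}_0$, condition (iv) then holds on a dense open subset of $\mathcal{W}$, and combining this with the previous paragraph (the general member of $\mathcal{W}$ is basepoint-free, has $h^0=2$, and avoids $W^2_{14}(C)$) the pencil trick and Proposition \ref{tanspace} force $\mathcal{W}$ to be generically smooth of dimension $7$, with its general member also satisfying (i)--(iii). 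For $\mathcal{L}_0$ I would take $\mathcal{L}_0:=\mathcal{A}(p_1+\cdots+p_6)$, where $\mathcal{A}$ is a gonality pencil on $C$ (residual to $L\cap C$ in a plane section, for one of the $27$ lines $L$ of $Y$) and $p_1,\dots,p_6$ are general points. Because $\mathcal{A}$ is basepoint-free with $h^0(\mathcal{A})=2$ and $h^1(\mathcal{A})=12$, adjoining the six general points leaves $h^0$ equal to $2$; and $h^0(\mathcal{L}_0^{-2}(3))=h^0(\omega_C\otimes\mathcal{A}^{-2}(-2p_1-\cdots-2p_6))$ vanishes because $h^0(\omega_C\otimes\mathcal{A}^{-2})=h^0(\mathcal{A}^2)+2\le 10$ by Clifford's theorem ($C$ is not hyperelliptic and $\mathcal{A}^{2}\not\sim 0,\omega_C$), so imposing six general double points exhausts all of its sections.

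The step I expect to require the most care is the interplay in the first two paragraphs: one must correctly certify ${\rm gon}(C)=8$ and ${\rm Cliff}(C)=6$ so that Theorem \ref{apfarthm} applies with the sharp value $d=13=g-k+2$, and then track the non-basepoint-free locus and the locus $\{h^0(\mathcal{L}(-1))\ge 1\}$ against the expected dimension $\rho=7$. Once this bookkeeping is in place, exhibiting $\mathcal{L}_0$ is a short cohomological computation governed by Clifford's theorem, and Proposition \ref{tanspace} supplies the generic smoothness.
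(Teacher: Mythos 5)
Your proposal is correct and follows essentially the same route as the paper: Basili's theorem plus Aprodu--Farkas to make conditions (i)--(iii) generic on every component of $W^1_{14}(C)$, openness of condition (iv) propagated from a single exhibited witness, and the base-point-free pencil trick together with Proposition \ref{tanspace} applied at a \emph{general} member of $\mathcal{W}$ to obtain generic smoothness in dimension $7$. The only real difference is the witness for (iv): you take $\mathcal{A}(p_1+\cdots+p_6)$ for a gonality pencil $\mathcal{A}$ and general points, using the Clifford-theorem bound $h^0(\omega_C\otimes\mathcal{A}^{-2})\le 10$, whereas the paper takes $\mathcal{L}_0=\mathcal{O}_C(1)(p+q)$ with $p,q$ chosen so their tangent lines are non-coplanar; both choices work equally well.
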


\begin{proof}
Let us fix a general smooth curve $C \in |\mathcal{O}_{X}(3)|$ for the duration of the proof.  We will first show that every irreducible component of $W^1_{14}(C)$ contains a nonempty Zariski-open subset whose members satisfy conditions (i),(ii) and (iii).  Then we will produce a member $\mathcal{L}_0$ of $W^1_{14}(C)$ which satisfies condition (iv).  It then follows that if $\mathcal{W}$ is any irreducible component of $W^1_{14}(C)$ containing $\mathcal{L}_0,$ then the general member of $\mathcal{W}$ satisfies conditions (i),(ii),(iii), and (iv).  Finally, we will be able to deduce from Proposition \ref{tanspace} that $\mathcal{W}$ is generically smooth of the expected dimension 7. 

By Theorem \ref{expdim}, each irreducible component of $W^1_{14}(C)$ has dimension at least 7.  Since $19-14+1 > 0,$ Lemma \ref{strictinc} implies that each such component contains a nonempty Zariski-open subset whose members satisfy condition (ii).  

The members of $W^1_{14}(C)$ which possess a base point are parametrized by the image of the addition map $\sigma: C \times W^1_{13}(C) \rightarrow W^1_{14}(C)$ which assigns each pair $(p,\mathcal{L}')$ to $\mathcal{L}'(p).$  We will now show that the image of $\sigma$ has dimension at most 6, which implies that the general member of $W^1_{14}(C)$ satisfies condition (i).

The restriction map $H^0(\mathcal{O}_{\mathbb{P}^{3}}(3)) \rightarrow H^0(\mathcal{O}_X(3))$ is an isomorphism, so we may assume that $C$ is the intersection of $X$ with a smooth cubic surface $Y \subseteq \mathbb{P}^3.$  Since $Y$ contains a line, there exist 4 collinear points on $C.$  If there were 5 or more collinear points on $C,$ then by B\'{e}zout's theorem the intersection $X \cap Y$ would contain a line; this contradicts the fact that $X \cap Y=C$ is smooth and irreducible.  Given that the degree of $C$ is 12, Theorem \ref{basili} implies that ${\rm gon}(C)=8$ and ${\rm Cliff}(C)=6.$  Furthermore, $\rho(19,1,8)<0,$ so $C$ satisfies the hypotheses of Theorem \ref{apfarthm}; consequently the dimension of $W^1_{13}(C)$ is at most 5.  This implies in turn that the image of $\sigma$ has dimension at most 6, e.g. that the general member of $W^1_{14}(C)$ satisfies condition (i).  

Given that $\mathcal{O}_C(1)$ has degree 12, $\mathcal{L}(-1)$ is of degree 2 for all $\mathcal{L} \in {\rm Pic}^{14}(C).$  Line bundles of degree 2 having global sections form a 2-dimensional family, so the general member of $W^1_{14}(C)$ satisfies condition (iii).

At this stage, we have not ruled out the possibility that $h^{0}(\mathcal{L}^{-2}(3)) \neq 0$ for all $\mathcal{L} \in W^1_{14}(C),$ i.e. that every member of $W^1_{14}(C)$ fails to satisfy condition (iv).  The set of all $\mathcal{L} \in \textnormal{Pic}^{14}(C)$ for which $h^0(\mathcal{L}^{-2}(3)) \geq 1$ is Zariski-closed, so to show that there exists an irreducible component $\mathcal{W}$ of $W^1_{14}(C)$ whose general member satisfies condition (iv) (we have already checked conditions (i) through (iii)) it is enough to exhibit an element $\mathcal{L}_0$ of $W^1_{14}(C)$ for which $h^0(\mathcal{L}_0^{-2}(3))=0$; we may then take $\mathcal{W}$ to be any irreducible component of $W^1_{14}(C)$ containing $\mathcal{L}_0.$     

Let $p,q \in C$ be sufficiently general so that the tangent lines to $C \subseteq \mathbb{P}^{3}$ at $p$ and $q$ are not coplanar (i.e. do not intersect).  Then the line bundle $\mathcal{L}_0:=\mathcal{O}_{C}(1)(p+q)$ is a member of $W^1_{14}(C),$ since its degree is 14 and $h^0(\mathcal{L}_0) \geq h^0(\mathcal{O}_C(1)) > 2.$  Our hypotheses on $p$ and $q$ imply that $h^0(\mathcal{L}_0^{-2}(3))=h^0(\mathcal{O}_{C}(1)(-2p-2q))=0,$ so $\mathcal{L}_0$ satisfies condition (iv).  

Take $\mathcal{W}$ to be an irreducible component of $W^1_{14}(C)$ containing $\mathcal{L}_0,$ and let $\mathcal{L}$ be a general element of $\mathcal{W}.$  The previous arguments imply that $\mathcal{L}$ satisfies conditions (i),(ii), (iii) and (iv), so $\mathcal{L}$ fits into an exact sequence of the form (\ref{split2}).  Twisting (\ref{split2}) by $\mathcal{L}^{-1}(3)$ and taking cohomology, we see that the kernel of the cup-product map $\mu : H^0(\mathcal{L}) \otimes H^0(\omega_{C} \otimes \mathcal{L}^{-1}) \rightarrow H^0(\omega_{C})$ is isomorphic to $H^0(\mathcal{L}^{-2}(3)).$  Since the latter cohomology group is zero, it follows from Theorem \ref{tanspace} that $\mathcal{W}$ is smooth of dimension 7 at $\mathcal{L}.$  This concludes the proof. 
\end{proof}

\begin{rem}
Theorem \ref{tanspace} does not apply to the line bundle $\mathcal{L}_0$ constructed in the previous proof, since $\mathcal{L}_0$ is a member of $W^2_{14}(C).$ 
\end{rem}
\begin{cor}
\label{existssimple}
There exists a simple weakly Ulrich bundle $\mathcal{E}$ rank 2 on $X$ with $c_1(\mathcal{E})=3H$ and $c_2(\mathcal{E})$=14 which is globally generated in codimension 1. \hfill \qedsymbol
\end{cor}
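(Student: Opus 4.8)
The plan is to obtain $\mathcal{E}$ by composing Propositions \ref{lbexist} and \ref{weaklyequiv}, which between them supply every property demanded in the statement. First I would note that, for any smooth quartic surface $X$, the linear system $|\mathcal{O}_X(3)|$ is base-point free (indeed very ample), so Bertini's theorem guarantees that a general member $C$ is a smooth irreducible curve; by adjunction on the K3 surface $X$ one has $2g(C)-2 = C^2 = (3H)^2 = 9H^2 = 36$, hence $g(C)=19$, which is exactly the genus appearing in Proposition \ref{lbexist} (note $\rho(19,1,14)=7$).

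Next I would apply Proposition \ref{lbexist} to this general smooth curve $C$: it produces an irreducible component $\mathcal{W} \subseteq W^1_{14}(C)$ whose general element $\mathcal{L}$ satisfies conditions (i)--(iv) of Proposition \ref{weaklyequiv}, namely $\mathcal{L}$ is basepoint-free, $h^0(\mathcal{L})=2$, $h^0(\mathcal{L}(-1))=0$, and $h^0(\mathcal{L}^{-2}(3))=0$. Since these are precisely the hypotheses of Proposition \ref{weaklyequiv}, feeding such an $\mathcal{L}$ into the elementary-modification sequence (\ref{elmod}) yields a rank-2 vector bundle $\mathcal{E}$ on $X$ with $c_1(\mathcal{E})=3H$ and $c_2(\mathcal{E})=14$ (property (a)) which is weakly Ulrich (property (b)), globally generated in codimension 1 (property (c)), and simple (property (d)). This $\mathcal{E}$ is the bundle asserted to exist in the corollary, and the proof is complete.

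Since both input propositions are already in hand, there is essentially no obstacle remaining: the only verification needed is that a general smooth member of $|\mathcal{O}_X(3)|$ exists and has the expected genus, which is routine. It is worth emphasising, however, that the genuine content lies upstream --- in Proposition \ref{lbexist}, where the locus of line bundles with a base point is controlled by bounding the image of the addition map $\sigma$ using the Aprodu--Farkas dimension estimate (Theorem \ref{apfarthm}) together with Basili's computation ${\rm gon}(C)=8$, ${\rm Cliff}(C)=6$ (Theorem \ref{basili}); and in Proposition \ref{weaklyequiv}, where simplicity is extracted from condition (iv) through the chain of cohomology identifications culminating in the isomorphism (\ref{endisom1}).
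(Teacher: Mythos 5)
Your proposal is correct and is exactly the argument the paper intends: the corollary is stated with no separate proof precisely because it follows immediately by feeding the line bundle $\mathcal{L}$ produced by Proposition \ref{lbexist} into Proposition \ref{weaklyequiv}. Your added check that a general $C \in |\mathcal{O}_X(3)|$ is smooth of genus $19$ (via adjunction, $C^2=36$) is a harmless and accurate verification of the setup.
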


\begin{prop}
\label{simplemoduli}
There exists a smooth irreducible component  $\textnormal{Spl}_{X}^{\circ}(2,3H,14)$ of the moduli space $\textnormal{Spl}_{X}(2,3H,14)$ of simple rank-2 vector bundles on $X$ with $c_1=3H,c_2=14$ and a nonempty Zariski-open subset $\mathcal{U} \subseteq \textnormal{Spl}^{\circ}_{X}(2,3H,14)$ parametrizing weakly Ulrich bundles on $X.$  Moreover, we have a diagram
\begin{equation}
\label{grassdiag}
\xymatrix{
        &\mathbb{G}_{\mathcal{U}} \ar@{-->}[r]^{h} \ar[d]_{p}  &|\mathcal{O}_X(3)| \\
&\mathcal{U}    &
}
\end{equation}
where $p: \mathbb{G}_{\mathcal{U}} \rightarrow \mathcal{U}$ is the Grassmann bundle whose fiber over $\mathcal{E} \in \mathcal{U}$ is $G(2,H^0(\mathcal{E}))$ and $h: \mathbb{G}_{\mathcal{U}} \dashrightarrow |\mathcal{O}_X(3)|$ is a dominant rational map defined by taking a general flag $V \subseteq H^0(\mathcal{E})$ to the degeneracy locus of the induced morphism $\mathcal{E}^{\vee} \rightarrow V^{\vee} \otimes \mathcal{O}_{X}.$
\end{prop}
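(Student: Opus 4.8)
The plan is to build every object in the diagram at once out of the relative version of the construction in Proposition \ref{weaklyequiv}, using Proposition \ref{lbexist} to ensure it is nonvacuous and Theorem \ref{simpsmooth} to control the ambient moduli space. First I would fix a nonempty open $U\subseteq|\mathcal{O}_X(3)|$ over which the conclusion of Proposition \ref{lbexist} holds, let $\mathcal{C}\to U$ be the universal smooth curve, and let $\mathcal{Q}$ be the locally closed subscheme of the relative Picard scheme $\textnormal{Pic}^{14}(\mathcal{C}/U)$ parametrizing pairs $(C,\mathcal{L})$ for which $\mathcal{L}$ satisfies conditions (i)--(iv) of Proposition \ref{weaklyequiv}. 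By Proposition \ref{lbexist} the projection $\mathcal{Q}\to U$ is dominant, so I may fix an irreducible component $\mathcal{Q}^{\circ}$ of $\mathcal{Q}$ which dominates $|\mathcal{O}_X(3)|$. Over a dense open of $\mathcal{Q}^{\circ}$ the relative analogue of the sequence (\ref{elmod}) --- push the Poincar\'{e} line bundle on the universal curve forward to $X\times\mathcal{Q}^{\circ}$, take relative global sections to obtain a rank-$2$ locally free sheaf by condition (ii), note that the evaluation map back onto the pushed-forward line bundle is surjective by condition (i), and let $\mathcal{E}^{\vee,\mathrm{univ}}$ be its kernel --- produces a flat family of rank-$2$ bundles whose fibres are, by Proposition \ref{weaklyequiv}, simple and weakly Ulrich with $c_1=3H$ and $c_2=14$. (Poincar\'{e} bundles exist only \'{e}tale-locally on $\mathcal{Q}^{\circ}$, but this is harmless since the resulting classifying map to the moduli space is independent of the choice.) On a dense open of $\mathcal{Q}^{\circ}$ this gives a morphism to $\textnormal{Spl}_X(2,3H,14)$; the target is nonempty by Corollary \ref{existssimple}, hence smooth by Theorem \ref{simpsmooth}, so the image of this irreducible source lies in a single irreducible component, which I take as the definition of $\textnormal{Spl}^{\circ}_X(2,3H,14)$. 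It is smooth and irreducible, and Theorem \ref{simpsmooth} together with a Chern-class computation shows that it has dimension $14$.

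Next I would set $\mathcal{U}$ equal to the locus in $\textnormal{Spl}^{\circ}_X(2,3H,14)$ of weakly Ulrich bundles, which is Zariski-open by Proposition \ref{opencond} and nonempty since it contains a general point of the image of $\mathcal{Q}^{\circ}$. For $\mathcal{E}\in\mathcal{U}$ the weakly Ulrich vanishings force $h^1(\mathcal{E})=h^2(\mathcal{E})=0$, hence $h^0(\mathcal{E})=\chi(\mathcal{E})=8$ is constant on $\mathcal{U}$; by cohomology and base change the pushforward of a quasi-universal bundle to $\mathcal{U}$ is locally free of rank $8$, and since the Grassmannian of $2$-planes is unaffected by twisting the bundle by a line bundle on the base, the fibrewise $G(2,H^0(\mathcal{E}))$ glue to an honest Grassmann bundle $p\colon\mathbb{G}_{\mathcal{U}}\to\mathcal{U}$ of relative dimension $12$. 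The rational map $h$ sends $(\mathcal{E},V)$ to the zero divisor of the section of $\det\mathcal{E}=\mathcal{O}_X(3H)$ obtained as the determinant of $\mathcal{E}^{\vee}\to V^{\vee}\otimes\mathcal{O}_X$; it is defined on the open locus where this section is nonzero, and that locus meets $\mathbb{G}_{\mathcal{U}}$ because if $\mathcal{E}$ arises from a pair $(C,\mathcal{L})\in\mathcal{Q}^{\circ}$ and $V:=H^0(\mathcal{L})^{\ast}\subseteq H^0(\mathcal{E})$ is the subspace read off from the dual sequence (\ref{dualtwist1}), then the degeneracy locus is exactly $C$. This same observation yields dominance: for general $C\in|\mathcal{O}_X(3)|$, taken in the dense image of $\mathcal{Q}^{\circ}\to|\mathcal{O}_X(3)|$, pick $(C,\mathcal{L})\in\mathcal{Q}^{\circ}$; then the associated bundle $\mathcal{E}$ lies in $\textnormal{Spl}^{\circ}_X(2,3H,14)$ by construction and is weakly Ulrich by Proposition \ref{weaklyequiv}, so $\mathcal{E}\in\mathcal{U}$, and $h(\mathcal{E},H^0(\mathcal{L})^{\ast})=C$. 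Hence the image of $h$ is dense and $h$ is dominant.

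The step I expect to be the real obstacle is not any single geometric input but the coherence of the whole construction: one must know that the bundles obtained as $(C,\mathcal{L})$ ranges over the relative Brill--Noether locus sweep out a \emph{single} component of $\textnormal{Spl}_X(2,3H,14)$ rather than several --- which is precisely why I would commit at the outset to an irreducible component $\mathcal{Q}^{\circ}$ dominating $|\mathcal{O}_X(3)|$ --- and one must carry out the relative version of (\ref{elmod}), the descent of the Grassmann bundle, and the classifying map over a moduli space carrying only an \'{e}tale-local universal family. As a consistency check the dimensions match: $\dim\mathbb{G}_{\mathcal{U}}=14+12=26$ while $\dim|\mathcal{O}_X(3)|=19$, so a general fibre of $h$ has dimension $7$, in agreement with the dimension of the Brill--Noether component produced in Proposition \ref{lbexist}.
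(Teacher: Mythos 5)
Your proposal is correct, but it reaches the two substantive conclusions --- the choice of the component $\textnormal{Spl}^{\circ}_{X}(2,3H,14)$ and the dominance of $h$ --- by a genuinely different route than the paper. The paper takes $\textnormal{Spl}^{\circ}_{X}(2,3H,14)$ to be any component containing one bundle from Corollary \ref{existssimple}, defines the rational map $h$ after observing (by semicontinuity from that corollary) that the general $\mathcal{E} \in \mathcal{U}$ is globally generated in codimension $1$, and proves dominance by a fiber-dimension count: it identifies $h^{-1}(C)$ with a Zariski-open subset of an irreducible component $\mathcal{W}_{C}$ of $W^{1}_{14}(C)$, notes that every nonempty fiber has dimension at least $26 - \dim(\textnormal{im}\, h)$, and uses Proposition \ref{lbexist} to exhibit a $7$-dimensional fiber, forcing the image to be $19$-dimensional. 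You instead globalize the construction of Proposition \ref{weaklyequiv} over a component $\mathcal{Q}^{\circ}$ of the relative Brill--Noether locus dominating $|\mathcal{O}_X(3)|$, anchor $\textnormal{Spl}^{\circ}_{X}(2,3H,14)$ as the component containing the image of the resulting classifying map, and then get dominance almost for free, since a general $C$ is the degeneracy locus attached to the pair $(\mathcal{E}, H^0(\mathcal{L})^{\ast})$ coming from some $(C,\mathcal{L}) \in \mathcal{Q}^{\circ}$; your definition of $h$ on the nonvanishing locus of the determinant section also sidesteps the paper's semicontinuity argument about generation in codimension $1$. What your route buys is a transparent dominance statement, with the $26 = 14 + 12$ versus $19 + 7$ bookkeeping demoted to a consistency check; what it costs is the relative machinery (relative Picard scheme, \'{e}tale-local Poincar\'{e} bundles, base change for the relative kernel construction, a classifying morphism to the moduli space of simple sheaves), which the paper avoids by arguing pointwise at the price of its more delicate fiber identification. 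One small caveat: a quasi-universal family with multiplicity $s>1$ would produce $G(2,8s)$ rather than $G(2,H^0(\mathcal{E}))$, so the Grassmann bundle should be obtained by gluing the fibrewise $G(2,H^0(\mathcal{E}))$ from \'{e}tale-local universal families, as your parenthetical about twists already suggests; this is exactly the level of detail the paper itself elides, so it is not a gap relative to the published argument.
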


\begin{proof}
By Corollary \ref{existssimple}, we have that $\textnormal{Spl}_{X}(2,3H,14)$ is nonempty, and Theorem \ref{simpsmooth} implies that each of its irreducible components is smooth of dimension 14.  We fix an irreducible component $\textnormal{Spl}_{X}^{\circ}(2,3H,14)$ containing the isomorphism class of one of the bundles constructed in Proposition \ref{existssimple}.  Since being weakly Ulrich is an open condition by Proposition \ref{opencond}, the set
\begin{equation}
\label{openweakly}
\mathcal{U}:=\{\mathcal{E} \in \textnormal{Spl}_{X}^{\circ}(2,3H,14) : \mathcal{E} \textnormal{ is weakly Ulrich } \}
\end{equation}
is nonempty and Zariski-open.  Moreover, the fact that weakly Ulrich bundles have no higher cohomology implies (via Riemann-Roch) that $h^0(\mathcal{E})=8$ for all $\mathcal{E} \in \mathcal{U}.$  Consequently there exists a Grassmann bundle  $p: \mathbb{G}_{\mathcal{U}} \rightarrow \mathcal{U}$ whose fiber over $\mathcal{E} \in \mathcal{U}$ is $G(2,H^0(\mathcal{E})) \cong G(2,8)$.  Since $G(2,8)$ is 12-dimensional, the total space $\mathbb{G}_{\mathcal{U}}$ is 26-dimensional.

The dimension of the locus on which a vector bundle fails to be globally generated is upper-semicontinuous in smooth families, so Proposition \ref{existssimple} implies that the general vector bundle $\mathcal{E} \in \mathcal{U}$ is globally generated in codimension 1.  It follows that we have a well-defined rational map $h: \mathbb{G}_{\mathcal{U}} \dashrightarrow |\mathcal{O}_X(3)|$ which takes the general flag $V \subseteq H^0(\mathcal{E})$ to the degeneracy locus of the induced morphism $\mathcal{E}^{\vee} \rightarrow V^{\vee} \otimes \mathcal{O}_{X}.$

We now show that $h$ is dominant.  For general $C$ in the image of $h,$ the fiber $h^{-1}(C)$ is the set of all flags $V \subseteq H^0(\mathcal{E})$ for which the degeneracy locus of $\mathcal{E}^{\vee} \rightarrow V^{\vee} \otimes \mathcal{O}_{X}$ is the curve $C;$ the irreducibility of $\mathbb{G}_{\mathcal{U}}$ implies that $h^{-1}(C)$ is irreducible as well.  It can be seen from (\ref{elmod}) that the elements of $h^{-1}(C)$ are in bijective correspondence with a Zariski-open subset of an irreducible component $\mathcal{W}_{C}$ of $W^1_{14}(C)$ parametrizing basepoint-free degree-14 line bundles with 2 global sections.  Indeed, a general $\mathcal{L} \in \mathcal{W}_C$ gives rise to a sequence of the form (\ref{elmod}), and for each flag $V \subseteq H^0(\mathcal{E})$ in $h^{-1}(C),$ the cokernel of $\mathcal{E}^{\vee} \rightarrow V^{\vee} \otimes \mathcal{O}_{X}$ is an element of $\mathcal{W}_C.$ Since $|\mathcal{O}_X(3)|$ is 19-dimensional, we have that $h^{-1}(C)$ is of dimension at least 7.  By Proposition \ref{lbexist}, we have that $h$ has a 7-dimensional fiber, so the general fiber of $h$ is 7-dimensional; therefore $h$ is dominant.
\end{proof}

\subsection{Proof of Theorem \ref{allquart}}

Before stating the next result, it is worth reminding the reader that $\mathcal{E} \in \mathcal{U}$ need not be globally generated.

\begin{prop}
\label{weakserre}
Let $\mathcal{E}$ be a general element of $\mathcal{U}.$  Then there exists an exact sequence of the form
\begin{equation}
\label{serre1}
0 \rightarrow \mathcal{O}_X \rightarrow \mathcal{E} \rightarrow \mathcal{I}_{Z|X}(3) \rightarrow 0
\end{equation}
where $Z$ is a length-14 l.c.i. subscheme of $X.$
\end{prop}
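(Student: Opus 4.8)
The plan is to produce the sequence (\ref{serre1}) via the Serre construction, taking $Z$ to be the zero scheme of a sufficiently general global section of $\mathcal{E}$. To begin, recall from the proof of Proposition \ref{simplemoduli} that a general $\mathcal{E} \in \mathcal{U}$ is globally generated in codimension $1$ and satisfies $h^0(\mathcal{E}) = 8$; fix such an $\mathcal{E}$, and let $U \subseteq X$ be the open subset with finite complement over which $\mathcal{E}$ is globally generated.

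The heart of the argument is to show that a general section $s \in H^0(\mathcal{E})$ is a regular section, i.e.\ that its zero scheme is purely $0$-dimensional. For this I would form the incidence variety
\[
\mathcal{Z} := \{ (s,x) \in \mathbb{P}(H^0(\mathcal{E})) \times U : s(x) = 0 \}.
\]
For every $x \in U$ the evaluation map $H^0(\mathcal{E}) \to \mathcal{E}_x \cong k^2$ is surjective, so the fiber of the projection $\mathcal{Z} \to U$ is a $\mathbb{P}^5$; hence $\mathcal{Z}$ is irreducible of dimension $2 + 5 = 7 = \dim \mathbb{P}(H^0(\mathcal{E}))$. It follows that the general fiber of the other projection $\mathcal{Z} \to \mathbb{P}(H^0(\mathcal{E}))$ has dimension at most $0$, i.e.\ for general $s$ the set $Z(s) \cap U$ is finite. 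Since $X \setminus U$ is finite and any positive-dimensional part of $Z(s)$ would contain a curve in $X$ meeting $U$ in a curve, this forces $Z(s)$ itself to be finite; and $Z(s)$ is nonempty, because a nowhere-vanishing section would exhibit $\mathcal{O}_X$ as a sub-line-bundle of $\mathcal{E}$ with locally free quotient and hence force $c_2(\mathcal{E}) = 0$, contradicting $c_2(\mathcal{E}) = 14$.

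Writing $Z := Z(s)$, this zero scheme has the expected codimension $2$, so it is automatically a local complete intersection (in a local trivialization $\mathcal{E} \cong \mathcal{O}_X^2$ the two coordinates of $s$ form a regular sequence), and its length equals $\deg c_2(\mathcal{E}) = 14$. To conclude I would invoke the Koszul complex of the regular section $s$,
\[
0 \to (\det \mathcal{E})^{-1} \to \mathcal{E}^{\vee} \to \mathcal{I}_{Z|X} \to 0,
\]
and tensor it by $\det \mathcal{E} = \mathcal{O}_X(3)$; using the rank-$2$ identification $\mathcal{E}^{\vee} \otimes \det \mathcal{E} \cong \mathcal{E}$, this becomes precisely (\ref{serre1}), the inclusion $\mathcal{O}_X \hookrightarrow \mathcal{E}$ recovering $s$.

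I expect the only real obstacle to be the passage from ``globally generated in codimension $1$'' to ``the general section is regular'': one must rule out the possibility that the zero locus of a general section picks up a curve component running through one of the finitely many points where $\mathcal{E}$ fails to be globally generated. This is exactly what the dimension count for $\mathcal{Z}$ over the good locus $U$ provides, once combined with the elementary remark that a curve in $X$ cannot be supported on a finite set, so any such component would already be detected inside $Z(s) \cap U$.
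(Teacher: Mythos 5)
Your argument is correct, but it follows a different route from the paper. The paper's proof starts from the presentation (\ref{dualtwist1}) of a general $\mathcal{E} \in \mathcal{U}$ furnished by Proposition \ref{simplemoduli}, chooses $Z$ to be a divisor in the linear system $|\mathcal{L}|$ on the smooth curve $C \in |\mathcal{O}_X(3)|$ (so that $Z$ is visibly an l.c.i. of length 14), and then produces the sequence (\ref{serre1}) by a diagram chase: the section $\sigma \in H^0(\mathcal{L})$ cutting out $Z$ determines a one-dimensional subspace $K = \ker(\sigma^{\vee}) \subseteq H^0(\mathcal{L})^{\vee}$, and the map $K \otimes \mathcal{O}_X \to \mathcal{E}$ inside (\ref{dualtwist1}) has cokernel $\mathcal{I}_{Z|X}(3)$. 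You instead bypass the presentation entirely and show that a \emph{general} global section of $\mathcal{E}$ has zero scheme of pure codimension 2, using the incidence-variety dimension count over the locus where $\mathcal{E}$ is globally generated together with the remark that the non-generated locus is finite (so a curve component of the zero scheme cannot be hidden there), and then conclude with the Koszul complex; the nonemptiness and length-14 statements come from $c_2(\mathcal{E})=14$. Both proofs are sound, and they are secretly the same construction, since the paper's map $\mathcal{O}_X \to \mathcal{E}$ is itself a global section whose zero scheme is $Z$. What the paper's choice buys is that its $Z$ comes labelled with the data $(C,\mathcal{L})$, i.e. $Z \subseteq C$ and $\mathcal{O}_C(Z) \cong \mathcal{L}$, which is exactly what is used afterwards in Proposition \ref{bigdim}(i) to identify the subschemes appearing in (\ref{serrext0}) with $\textnormal{Im}(\mathfrak{p}_1)$; your version, while more self-contained at this step, does not record that link, so if one adopted it one would still need to observe (e.g. by composing $\mathcal{O}_X \to \mathcal{E} \to \mathcal{L}^{-1}(3)$, or by wedging with a second section) that the general such $Z$ lies on a member of $|\mathcal{O}_X(3)|$ in the prescribed way.
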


\begin{proof}
It follows from Proposition \ref{simplemoduli} that the general element $\mathcal{E}$ of $\mathcal{U}$ fits into an exact sequence of the form (\ref{dualtwist1}) for some smooth curve $C \in |\mathcal{O}_X(3)|$ and some basepoint-free line bundle $\mathcal{L}$ on $C$ of degree 14.  Let $Z$ be an element of the linear system $|\mathcal{L}|$ on $C.$   Since $Z$ is a subscheme of a smooth curve in $X,$ it is l.c.i., and we have an exact sequence
\begin{equation}
0 \rightarrow \mathcal{O}_X(-3) \rightarrow \mathcal{I}_{Z|X} \rightarrow \mathcal{L}^{-1} \rightarrow 0
\end{equation}
Let $\sigma:H^{0}(\mathcal{O}_{C}) \rightarrow H^{0}(\mathcal{O}_{C}(Z)) \cong H^0(\mathcal{L})$ be a global section of $\mathcal{L}$ which cuts out $Z$ in $C,$ and define
\begin{equation}
K:=\ker{(\sigma^{\vee}:H^{0}(\mathcal{L})^{\vee} \rightarrow H^{0}(\mathcal{O}_{C})^{\vee})}
\end{equation}
We then have the following commutative diagram with exact rows and columns

\medskip
\begin{center}
\begin{tiny}
$\xymatrix{
            &  &  &0 \ar[d]  & \\
            &0 \ar[d]  &  &H^{0}(\mathcal{O}_{C})^{\vee} \otimes \mathcal{O}_{X} \ar[d] & \\
0 \ar[r] &{K \otimes \mathcal{O}_{X} \cong \mathcal{O}_{X}} \ar[r] \ar[d] &{\mathcal{E}} \ar[r] \ar@{=}[d] &\mathcal{I}_{Z|X}(3) \ar[r] \ar[d] &0 \\
0 \ar[r] &H^{0}(\mathcal{L})^{\vee} \otimes \mathcal{O}_{X} \ar[r] \ar[d] &{\mathcal{E}} \ar[r] &\mathcal{L}^{-1}(3) \ar[r] \ar[d]&0 \\
            &H^{0}(\mathcal{O}_{C})^{\vee} \otimes \mathcal{O}_{X} \ar[d] & &0 & \\
            &0 & & & \\
}$
\end{tiny}
\end{center}
\medskip
whose top row is the desired exact sequence.
\end{proof}

\begin{prop}
\label{quadriculrich}
Let $\mathcal{E} \in \mathcal{U}$ be obtained by an extension
\begin{equation}
\label{serrext0}
0 \rightarrow \mathcal{O}_X \rightarrow \mathcal{E} \rightarrow \mathcal{I}_{Z|X}(3) \rightarrow 0
\end{equation}
where $Z \in X^{[14]}.$  If $H^0(\mathcal{I}_{Z|X}(2))=0$ (i.e. $Z$ does not lie on a quadric in $\mathbb{P}^{3}$) then $\mathcal{E}$ is Ulrich.
\end{prop}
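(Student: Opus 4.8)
The plan is to verify the Ulrich property through Proposition \ref{ulrichacm}(ii). Since $\mathcal{E}$ has rank $2$ with $c_1(\mathcal{E})=3H$ and $c_2(\mathcal{E})=14$, Riemann--Roch shows that the Hilbert polynomial of $\mathcal{E}$ is $8\binom{t+2}{2}$, so $\mathcal{E}$ is Ulrich precisely when it is arithmetically Cohen--Macaulay; for a vector bundle on the smooth surface $X$ this is the same as requiring $H^1(\mathcal{E}(m))=0$ for every $m\in\mathbb{Z}$. Because $\mathcal{E}$ lies in $\mathcal{U}$ it is weakly Ulrich (Definition \ref{weakdef}), and this already yields $H^1(\mathcal{E}(m))=0$ for $m\geq 0$ and for $m\leq -3$. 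Finally, $\mathcal{E}$ has rank $2$ with $\det\mathcal{E}=\mathcal{O}_X(3)$, so $\mathcal{E}^{\vee}\cong\mathcal{E}(-3)$, and since $\omega_X\cong\mathcal{O}_X$ Serre duality gives $H^1(\mathcal{E}(-2))\cong H^1(\mathcal{E}(-1))^{\ast}$. Hence the whole statement reduces to the single vanishing $H^1(\mathcal{E}(-1))=0$.

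To establish this I would argue numerically. From the Hilbert polynomial, $\chi(\mathcal{E}(-1))=8\binom{1}{2}=0$, while $H^2(\mathcal{E}(-1))=0$ is again contained in the weakly Ulrich hypothesis. Consequently $h^1(\mathcal{E}(-1))=h^0(\mathcal{E}(-1))+h^2(\mathcal{E}(-1))-\chi(\mathcal{E}(-1))=h^0(\mathcal{E}(-1))$, so it suffices to show $H^0(\mathcal{E}(-1))=0$.

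This last vanishing is exactly where the assumption on $Z$ enters. Twisting the extension (\ref{serrext0}) by $\mathcal{O}_X(-1)$ gives
\[
0\rightarrow\mathcal{O}_X(-1)\rightarrow\mathcal{E}(-1)\rightarrow\mathcal{I}_{Z|X}(2)\rightarrow 0,
\]
and passing to global sections embeds $H^0(\mathcal{E}(-1))$ into $H^0(\mathcal{I}_{Z|X}(2))$, since $H^0(\mathcal{O}_X(-1))=0$ (it is the restriction of $\mathcal{O}_{\mathbb{P}^3}(-1)$ to $X$). As $H^0(\mathcal{I}_{Z|X}(2))=0$ by hypothesis, we get $H^0(\mathcal{E}(-1))=0$, hence $H^1(\mathcal{E}(-1))=0$, and $\mathcal{E}$ is Ulrich. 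I do not expect a genuine obstacle: the only non-formal input is the quadric hypothesis, the rest being the observation that for a weakly Ulrich rank-$2$ bundle of the correct Euler characteristic the sole possible failure of the ACM condition is concentrated in $H^1(\mathcal{E}(-1))$, which by Serre duality and Riemann--Roch is governed by $H^0(\mathcal{I}_{Z|X}(2))$. The one step deserving a moment's care is the reduction in the first paragraph, resting on the rank-$2$ identity $\mathcal{E}^{\vee}\cong\mathcal{E}(-3)$ together with the triviality of $\omega_X$.
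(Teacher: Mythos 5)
Your proof is correct. The reduction is the same as the paper's: use Proposition \ref{ulrichacm}, note that the Chern classes force the right Hilbert polynomial, invoke the weakly Ulrich vanishings for $\mathcal{E}\in\mathcal{U}$, and use $\mathcal{E}^{\vee}\cong\mathcal{E}(-3)$ together with Serre duality on the K3 to boil everything down to $H^1(\mathcal{E}(-1))=0$. Where you diverge is in the last step. The paper stays at the $H^1$--$H^2$ end of the long exact sequence of the twisted extension: it uses $H^2(\mathcal{E}(-1))=0$ to get surjectivity of the coboundary $H^1(\mathcal{I}_{Z|X}(2))\rightarrow H^2(\mathcal{O}_X(-1))$, computes $h^1(\mathcal{I}_{Z|X}(2))=4=h^2(\mathcal{O}_X(-1))$ (the quadric hypothesis enters through $\chi(\mathcal{I}_{Z|X}(2))=-4$ and $h^0=h^2=0$), and concludes the coboundary is an isomorphism, so its kernel $H^1(\mathcal{E}(-1))$ vanishes. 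You instead work at the $H^0$ end: $\chi(\mathcal{E}(-1))=0$ by Riemann--Roch and $H^2(\mathcal{E}(-1))=0$ by weak Ulrichness give $h^1(\mathcal{E}(-1))=h^0(\mathcal{E}(-1))$, and the inclusion $H^0(\mathcal{E}(-1))\hookrightarrow H^0(\mathcal{I}_{Z|X}(2))=0$ finishes it. The two computations extract the same information from the same sequence, but yours is leaner — no need to evaluate $h^1(\mathcal{I}_{Z|X}(2))$ or $h^2(\mathcal{O}_X(-1))$ — while the paper's version makes the 4-dimensional obstruction space (sections of $\mathcal{O}_X(1)$, dually) explicit, which is conceptually aligned with its later dimension counts on the Cayley--Bacharach locus.
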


\begin{proof}
Since $\mathcal{E}$ has the Chern classes expected of an Ulrich bundle with $c_1=3H,$ it also has the Hilbert polynomial expected of such an Ulrich bundle, so by Proposition \ref{ulrichacm}  it is enough to check that $\mathcal{E}$ is ACM.  Given that $\mathcal{E}$ is weakly Ulrich, this amounts to showing that $h^1(\mathcal{E}(-1))=h^1(\mathcal{E}(-2))=0.$  The isomorphisms (\ref{serreisoms}) imply that we need only verify the vanishing of $H^1(\mathcal{E}(-1)).$

Twisting (\ref{serrext0}) by $-1$ and taking cohomology, we have the exact sequence
\begin{equation}
0 \rightarrow H^1(\mathcal{E}(-1)) \rightarrow H^1(\mathcal{I}_{Z|X}(2)) \rightarrow H^2(\mathcal{O}_X(-1)) \rightarrow H^2(\mathcal{E}(-1))
\end{equation}
The fact that $\mathcal{E}$ is weakly Ulrich implies that $H^2(\mathcal{E}(-1))=0,$ so the coboundary map $H^1(\mathcal{I}_{Z|X}(2)) \rightarrow H^2(\mathcal{O}_X(-1))$ is surjective.  We have from Serre duality that $h^2(\mathcal{O}_X(-1))=h^0(\mathcal{O}_X(1))=4.$  Moreover, our assumption on $Z$ combined with the exact sequence
\begin{equation}
0 \rightarrow \mathcal{I}_{Z|X}(2) \rightarrow \mathcal{O}_{X}(2) \rightarrow \mathcal{O}_Z(2) \rightarrow 0
\end{equation}
implies that $h^0(\mathcal{I}_{Z|X}(2))=0$ and $h^2(\mathcal{I}_{Z|X}(2)) = h^1(\mathcal{O}_Z(2)) = 0.$  Therefore
\begin{equation}
-h^1(\mathcal{I}_{Z|X}(2)) = \chi(\mathcal{I}_{Z|X}(2)) = \chi(\mathcal{O}_X(2)) - \chi(\mathcal{O}_Z(2)) = 10 - 14 = -4
\end{equation}
All these calculations imply that $h^1(\mathcal{I}_{Z|X}(2))=h^2(\mathcal{O}_{X}(-1))=4.$  It follows that our surjective coboundary map
$H^1(\mathcal{I}_{Z|X}(2)) \rightarrow H^2(\mathcal{O}_X(-1))$ is an isomorphism, e.g. its kernel $H^1(\mathcal{E}(-1))$ is zero.
\end{proof}

It is now clear that Theorem \ref{allquart} will be proved once we show the existence of a weakly Ulrich bundle $\mathcal{E} \in \mathcal{U}$ and a length-14 subscheme $Z$ of $X$ satisfying the hypotheses of Proposition \ref{quadriculrich}; this is the goal of the remaining results in this section.  First, we need a definition.

\begin{defn}
\label{cbdef}
Let $Y$ be a smooth projective surface and let $Z$ be a zero-dimensional l.c.i. subscheme of $Y$ having length $\ell \geq 1.$  If $\mathcal{G}$ is a coherent sheaf on $Y,$ then $Z$ \textit{is Cayley-Bacharach w.r.t. }$\mathcal{G}$ if for any subscheme $Z'$ of $Z$ having length ${\ell}-1,$ the natural inclusion $H^0(\mathcal{I}_{Z|Y} \otimes \mathcal{G}) \hookrightarrow H^0(\mathcal{I}_{Z'|Y} \otimes \mathcal{G})$ is an isomorphism.
\end{defn}

The present importance of the Cayley-Bacharach property stems from the following fundamental result, which is a special case of Theorem 5.1.1 in \cite{HL}.

\begin{thm}
Let $Y$ be a smooth projective surface, let $Z$ be a zero-dimensional l.c.i. subscheme of $Y,$ and let $\mathcal{M}$ be a line bundle on $Y.$  Then there exists an extension of the form
\begin{equation}
\label{genserre}
0 \rightarrow \mathcal{O}_Y \rightarrow \mathcal{F} \rightarrow \mathcal{I}_{Z|Y} \otimes \mathcal{M} \rightarrow 0
\end{equation}
with $\mathcal{F}$ locally free if and only if $Z$ is Cayley-Bacharach w.r.t. $\omega_{Y} \otimes \mathcal{M}.$  \hfill \qedsymbol
\end{thm}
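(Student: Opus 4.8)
The plan is to classify the desired extensions by the group $\operatorname{Ext}^1_Y(\mathcal{I}_{Z|Y}\otimes\mathcal{M},\mathcal{O}_Y)$, to characterize by a purely local criterion which classes produce a locally free middle term, and then to translate the resulting global condition into the Cayley-Bacharach property via Serre duality. Write $\mathcal{L}:=\omega_Y\otimes\mathcal{M}$. The first step is to compute the $\operatorname{Ext}$ group: from the exact sequence $0\to\mathcal{I}_{Z|Y}\otimes\mathcal{M}\to\mathcal{M}\to\mathcal{O}_Z\otimes\mathcal{M}\to 0$ and the local Koszul resolution of the l.c.i.\ ideal $\mathcal{I}_{Z|Y}$ one finds $\mathcal{E}xt^q_Y(\mathcal{I}_{Z|Y}\otimes\mathcal{M},\mathcal{O}_Y)=0$ for $q\ge 2$, $\mathcal{E}xt^0\cong\mathcal{M}^{-1}$, and $\mathcal{E}xt^1\cong\mathcal{E}xt^2_Y(\mathcal{O}_Z,\mathcal{O}_Y)\otimes\mathcal{M}^{-1}\cong\omega_Z\otimes\mathcal{L}^{-1}|_Z$, which is an \emph{invertible} sheaf on $Z$ since a zero-dimensional l.c.i.\ scheme is Gorenstein. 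The local-to-global spectral sequence then collapses to the four-term exact sequence
\[
0\to H^1(Y,\mathcal{M}^{-1})\to\operatorname{Ext}^1_Y(\mathcal{I}_{Z|Y}\otimes\mathcal{M},\mathcal{O}_Y)\xrightarrow{\ \tau\ }H^0(Z,\mathcal{E}xt^1)\xrightarrow{\ d_2\ }H^2(Y,\mathcal{M}^{-1}),
\]
so that the image of the ``localization'' map $\tau$ is precisely $\ker(d_2)$.

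The second step is the local freeness criterion: the extension $\mathcal{F}$ associated to a class $\xi$ is locally free if and only if $\tau(\xi)\in H^0(Z,\mathcal{E}xt^1)$ generates the invertible sheaf $\mathcal{E}xt^1$ at every point of $Z$, i.e.\ $\tau(\xi)_x\notin\mathfrak{m}_x\cdot\mathcal{E}xt^1_x$ for all $x\in Z$. This is a local statement, checked with the Koszul resolution of $\mathcal{I}_{Z|Y}$: when the local class is a generator the extension is locally isomorphic to the Koszul extension with middle term $\mathcal{O}_Y^2$, so $\mathcal{F}$ is locally free of rank $2$; when it lies in $\mathfrak{m}_x\mathcal{E}xt^1_x$, a projective-dimension count shows $\mathcal{F}$ is not free at $x$. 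Combining the two steps, a locally free $\mathcal{F}$ exists if and only if $\ker(d_2)$ contains a section of $\mathcal{E}xt^1$ that generates it at every point of $Z$.

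The third step identifies this condition with Cayley-Bacharach. For each $x\in Z$ the sections of $\mathcal{E}xt^1$ that fail to generate at $x$ form a hyperplane $V_x\subset H^0(Z,\mathcal{E}xt^1)$, namely the kernel of the evaluation onto the one-dimensional space $\mathcal{E}xt^1\otimes k(x)$; and since $Z$ is Gorenstein, its colength-one subschemes $Z'\subset Z$ are in bijection with the points of $\operatorname{Supp}(Z)$, the one at $x$ being cut out by the socle of $\mathcal{O}_{Z,x}$. Over the infinite ground field, a linear subspace contained in a finite union of hyperplanes must lie in one of them, so ``$\ker(d_2)$ contains a section generating $\mathcal{E}xt^1$ everywhere'' is equivalent to ``$\ker(d_2)\not\subseteq V_x$ for every $x\in Z$''. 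Next, Serre duality on $Y$ gives $H^2(Y,\mathcal{M}^{-1})^{\vee}\cong H^0(Y,\mathcal{L})$, duality on the Gorenstein zero-cycle $Z$ gives $H^0(Z,\mathcal{E}xt^1)^{\vee}\cong H^0(Z,\mathcal{L}|_Z)$, and under these identifications $d_2$ is dual to the restriction map $\operatorname{res}\colon H^0(Y,\mathcal{L})\to H^0(Z,\mathcal{L}|_Z)$; hence $\ker(d_2)^{\perp}=\operatorname{im}(\operatorname{res})$, while $V_x^{\perp}$ is the ``socle line'' $\ell_x:=\ker\big(H^0(Z,\mathcal{L}|_Z)\to H^0(Z',\mathcal{L}|_{Z'})\big)$. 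Passing to annihilators, $\ker(d_2)\subseteq V_x$ becomes $\ell_x\subseteq\operatorname{im}(\operatorname{res})$, that is, $H^0(\mathcal{I}_{Z|Y}\otimes\mathcal{L})$ is properly contained in $H^0(\mathcal{I}_{Z'|Y}\otimes\mathcal{L})$. Therefore a locally free $\mathcal{F}$ exists if and only if $H^0(\mathcal{I}_{Z|Y}\otimes\mathcal{L})=H^0(\mathcal{I}_{Z'|Y}\otimes\mathcal{L})$ for every colength-one $Z'\subset Z$, i.e.\ precisely when $Z$ is Cayley-Bacharach with respect to $\mathcal{L}=\omega_Y\otimes\mathcal{M}$.

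The step I expect to be the main obstacle is the bookkeeping in this last stage: one must check carefully that the edge maps and the differential $d_2$ of the spectral sequence are genuinely Serre-dual to the naive restriction maps, and that ``generating the invertible sheaf $\mathcal{E}xt^1$ at $x$'' corresponds under duality to ``not vanishing on the socle of $\mathcal{O}_{Z,x}$'' --- both are standard for Gorenstein zero-cycles, but each requires a short local computation with the Koszul complex. To sidestep part of this, one can prove the two implications separately: for the ``if'' direction, the hyperplane-avoidance argument produces an extension class whose local data are generators and the local criterion gives a locally free $\mathcal{F}$; for the ``only if'' direction, given a locally free $\mathcal{F}$, restricting its defining sequence to a colength-one subscheme $Z'$ shows directly that any section of $\mathcal{L}$ vanishing on $Z'$ already vanishes on $Z$.
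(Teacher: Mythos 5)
The paper gives no proof of this statement, citing it as a special case of Theorem 5.1.1 in \cite{HL}; your argument is correct and is essentially the standard proof found there (classify extensions by $\mathrm{Ext}^1(\mathcal{I}_{Z|Y}\otimes\mathcal{M},\mathcal{O}_Y)$ via the local-to-global sequence, use the local criterion that the extension class must generate the invertible sheaf $\mathcal{E}\hspace{-1pt}xt^1$ at every point of the Gorenstein scheme $Z$, and dualize to translate this into the Cayley--Bacharach condition for $\omega_Y\otimes\mathcal{M}$). So there is nothing to flag: your route coincides with the argument the paper implicitly relies on through that citation.
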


It follows immediately from this theorem that any $Z \in X^{[14]}$ appearing in (\ref{serrext0}) must be Cayley-Bacharach w.r.t. $\mathcal{O}_X(3).$  Our plan for showing that the hypotheses of Proposition \ref{quadriculrich} are satisfied is to first prove that the locus in $X^{[14]}$ parametrizing subschemes of $X$ which lie on a quadric and are also Cayley-Bacharach w.r.t $\mathcal{O}_X(3)$ has dimension at most 20 (Proposition \ref{quadriccb}), and then prove that the locus parametrizing the subschemes $Z$ appearing in  (\ref{serrext0}) is 21-dimensional (Proposition \ref{bigdim}).

We begin with a study of the length-14 subschemes of $X$ which are contained in smooth members of the linear system $|\mathcal{O}_X(2)|.$

\begin{prop}
\label{cbprop}
Let $X \subseteq \mathbb{P}^{3}$ be a smooth quartic surface, and let $C \in |\mathcal{O}_X(2)|$ be a smooth curve.  Then
\begin{equation}
\mathfrak{CB}(C):=\{Z \in C^{(14)} : Z \textnormal{ is Cayley-Bacharach w.r.t. }\mathcal{O}_C(3)\}
\end{equation}
is at most 11-dimensional.
\end{prop}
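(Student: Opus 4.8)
The plan is to translate the Cayley--Bacharach condition into a cohomological statement about line bundles on $C$ and then bound the resulting locus in $C^{(14)}$ by a short incidence-variety argument. Since $C$ is the complete intersection of $X$ with a quadric surface, it is a smooth nondegenerate curve of degree $8$ in $\mathbb{P}^3$; by adjunction $\omega_C \cong \mathcal{O}_C(2)$, so $C$ has genus $9$. Fix a hyperplane section $H \in |\mathcal{O}_C(1)|$ and let $Z \in C^{(14)}$. For any length-$13$ subscheme $Z' \subset Z$, Riemann--Roch together with Serre duality and $\omega_C \cong \mathcal{O}_C(2)$ give
\begin{equation}
h^0(\mathcal{O}_C(3H-Z)) = 2 + h^0(\mathcal{O}_C(Z-H)), \qquad h^0(\mathcal{O}_C(3H-Z')) = 3 + h^0(\mathcal{O}_C(Z'-H)).
\end{equation}
Since $\mathcal{I}_{Z|C} = \mathcal{O}_C(-Z)$, the condition defining $\mathfrak{CB}(C)$ reads $h^0(\mathcal{O}_C(3H-Z)) = h^0(\mathcal{O}_C(3H-Z'))$ for every such $Z'$, which by the two displayed identities forces $h^0(\mathcal{O}_C(Z-H)) = 1 + h^0(\mathcal{O}_C(Z'-H)) \geq 1$. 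Hence $\mathfrak{CB}(C)$ is contained in
\begin{equation}
V := \{\, Z \in C^{(14)} : h^0(\mathcal{O}_C(Z-H)) \geq 1 \,\},
\end{equation}
equivalently the set of $Z \in C^{(14)}$ that are linearly equivalent to $H+E$ for some effective divisor $E$ of degree $6$ on $C$; it remains to show $\dim V \leq 11$.

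To estimate $\dim V$ I would use the incidence variety
\begin{equation}
\widetilde{V} := \{\, (Z,E) \in C^{(14)} \times C^{(6)} : Z \sim H+E \,\},
\end{equation}
whose image under the first projection is exactly $V$, so that $\dim \mathfrak{CB}(C) \leq \dim V \leq \dim \widetilde{V}$. The second projection $\widetilde{V} \to C^{(6)}$ is surjective, and its fiber over $E$ is the complete linear system of effective degree-$14$ divisors equivalent to $H+E$, namely $\mathbb{P}(H^0(\mathcal{O}_C(H+E)))$; by Riemann--Roch and $\omega_C \cong \mathcal{O}_C(2)$ this fiber has dimension $5 + h^0(\mathcal{O}_C(H-E))$. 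So the problem reduces to bounding $h^0(\mathcal{O}_C(H-E))$ together with the dimension of the locus in $C^{(6)}$ on which it is positive.

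The key geometric input is that a quadric section of a quartic contains no long collinear subscheme. Writing $Q$ for the quadric surface with $C = Q \cap X$: a line $\ell \subset \mathbb{P}^3$ not contained in $Q$ meets $C \subset Q$ in length at most $2$, while a line contained in $Q$ is not contained in the smooth irreducible curve $C = Q \cap X$, so $\ell \cap C = \ell \cap X$ has length at most $\deg X = 4$. Therefore no effective degree-$6$ divisor $E$ on $C$ lies on a line, so $E$ spans a linear subspace of $\mathbb{P}^3$ of dimension at least $2$ and $h^0(\mathcal{O}_C(H-E)) \leq 1$, with equality exactly when $E$ lies on a hyperplane $P$. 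In that case $E$ is a degree-$6$ subdivisor of the length-$8$ divisor $P \cap C$, which is a discrete choice, so such $E$ form a subvariety of $C^{(6)}$ of dimension at most $3$ (the dimension of the space of hyperplanes). Stratifying $\widetilde{V}$ by the value of $h^0(\mathcal{O}_C(H-E))$: over the open locus where this value is $0$ the stratum has dimension at most $\dim C^{(6)} + 5 = 11$, and over the locus where it is $1$ the stratum has dimension at most $3 + 6 = 9$. Hence $\dim \widetilde{V} \leq 11$, so $\dim \mathfrak{CB}(C) \leq 11$.

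The cohomological reduction of the first paragraph is routine, and the geometric claim about collinear points is elementary; the step that needs genuine care is the fiber-dimension analysis of $\widetilde{V}$---in particular verifying that the jumping locus where $h^0(\mathcal{O}_C(H-E))$ is positive is small enough not to push $\dim \widetilde{V}$ above $11$, since the crude bound $\dim C^{(6)}$ plus the maximal fiber dimension would only yield $12$. For this reason I would isolate the bound on collinear points on a quadric section of $X$ and prove it explicitly rather than leave it implicit.
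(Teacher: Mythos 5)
Your argument is correct, but it takes a genuinely different route from the paper's. The paper never converts the Cayley--Bacharach condition into positivity of $h^0(\mathcal{O}_C(Z-H))$; instead it covers $\mathfrak{CB}(C)$ by the addition map $C \times C^{(13)} \rightarrow C^{(14)}$, reduces (by finiteness of base loci) to bounding the locus $\mathfrak{Bpt}(C) \subseteq C^{(13)}$ of $Z'$ for which $|\mathcal{O}_C(3H-Z')|$ has a base point, and then runs a Brill--Noether analysis in $\mathrm{Pic}^{11}(C)$: the base-point locus $\mathcal{V} \subseteq W^2_{11}(C)$ is the image of $C \times W^2_{10}(C)$ with $W^2_{10}(C) \cong W_6(C)$, giving $6 \leq \dim \mathcal{V} \leq 7$, and the general fiber of $Z' \mapsto \mathcal{O}_C(3H-Z')$ over $\mathcal{V}$ is $4$-dimensional because $W^5_{13}(C) \cong W_3(C)$ is only $3$-dimensional, so the count is $7+4=11$. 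You instead use Riemann--Roch and $\omega_C \cong \mathcal{O}_C(2)$ to see that Cayley--Bacharach forces $Z \sim H+E$ with $E \in C^{(6)}$, and bound the resulting incidence variety by $6+5=11$, the key points being that $h^0(\mathcal{O}_C(H-E)) \leq 1$ always and that the jumping stratum where it equals $1$ is at most $3$-dimensional, both consequences of the elementary fact that a line meets $C = Q \cap X$ in a scheme of length at most $4$. Your route is more elementary (nothing beyond Riemann--Roch and an incidence count) and exploits the quadric through $C$ directly, in the same spirit as the paper's use of Basili's collinearity bound for cubic sections elsewhere; the paper's route is softer geometrically and does not need any control of collinear points, only standard Brill--Noether dualities. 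One small point you should make explicit: identifying $h^0(\mathcal{O}_C(H-E))$ with the number of independent planes through $E$ (used both for the bound $h^0 \leq 1$ and for the $3$-dimensional bound on the jumping stratum) requires that every section of $\mathcal{O}_C(1)$ be the restriction of a linear form, which holds because $C$ is a complete intersection, or simply because $h^0(\mathcal{O}_C(1))=4$ while the restriction map from $H^0(\mathcal{O}_{\mathbb{P}^3}(1))$ is injective.
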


\begin{proof}
By Definition \ref{cbdef}, we have that
\begin{equation}
\mathfrak{CB}(C)=\{Z \in C^{(14)} : {\forall} p \in \textnormal{supp}(Z) \hskip3pt p \textnormal { is a base point of }|\mathcal{O}_C(3H-Z+p)|\}
\end{equation}
Let $\sigma: C \times C^{(13)} \rightarrow C^{(14)}$ be the addition morphism defined by $\sigma(p,Z') \rightarrow p+Z'.$  Then the locus $\mathfrak{CB}(C)$ is contained in the image under $\sigma$ of the locus
\begin{equation}
\widetilde{\mathfrak{CB}(C)}:=\{(p,Z') \in C \times C^{(13)} : p \textnormal{ is a base point of }\mathcal{O}_{C}(3H-Z')\}
\end{equation}
Given that $\sigma$ is a finite morphism, it suffices to show that $\widetilde{\mathfrak{CB}(C)}$ is at most 11-dimensional.  Observe that the image $\widetilde{\mathfrak{CB}(C)}$ under the projection map $\textnormal{pr}_2 : C \times C^{(13)} \rightarrow C^{(13)}$ is the locus
\begin{equation}
\mathfrak{Bpt}(C):=\{Z' \in C^{(13)} : \mathcal{O}_C(3H-Z') \textnormal{ has a base point }\}
\end{equation}
Any linear series on a curve has finitely many base points, so the restriction of $\textnormal{pr}_2$ to $\widetilde{\mathfrak{CB}(C)}$ is a finite map onto $\mathfrak{Bpt}(C).$  Consequently we are reduced to showing that $\mathfrak{Bpt}(C)$ is at most 11-dimensional.

Since $(2H)^2=16,$ it follows from the adjunction formula that $C$ is a curve of genus 9.  Furthermore, the line bundle $\mathcal{O}_C(3H-Z')$ is of degree 11 for all $Z' \in C^{(13)},$ so Riemann-Roch tells us that $\dim{|\mathcal{O}_C(3H-Z')|} \geq 2$ for all $Z' \in C^{(13)}.$  Consider the commutative diagram
$$\xymatrix{
C^{(13)} \ar[r]^{a} \ar[dr]_{a'} &\textnormal{Pic}^{13}(C) \ar[d]_{\cong}^{\gamma} \\
 &\textnormal{Pic}^{11}(C)
}$$
where $a$ is defined by $Z' \mapsto \mathcal{O}_{C}(Z')$ and $\gamma$ is defined by $\mathcal{M} \mapsto \mathcal{M}^{-1}(3);$ it is straightforward to verify that $a'(C^{(13)}) \subseteq W^{2}_{11}(C)$ and that $\mathfrak{Bpt}(C)=(a')^{-1}(\mathcal{V}),$ where $\mathcal{V}$ is the subvariety of $W^{2}_{11}(C)$ parametrizing line bundles with a base point.

We claim that $6 \leq \dim{\mathcal{V}} \leq 7.$  Note that $\mathcal{V}$ is the image of the morphism $\rho: C \times W^2_{10}(C) \rightarrow W^{2}_{11}(C)$ defined by $(p,\mathcal{M}') \mapsto \mathcal{M}'(p).$  By Serre duality and Riemann-Roch, we have that $W^2_{10}(C) \cong W_{6}(C),$ so $C \times W^2_{10}(C)$ is 7-dimensional; this implies that $\dim(\mathcal{V}) \leq 7.$  To see that $\dim{\mathcal{V}} \geq 6,$ observe that for each $\mathcal{N} \in \mathcal{V},$ any pair $(p,\mathcal{M}') \in {\rho}^{-1}(\mathcal{N})$ must satisfy $\mathcal{M}' \cong \mathcal{N}(-p),$ e.g. $\rho^{-1}(\mathcal{N})$ is contained in the 1-dimensional locus $\{(p,\mathcal{N}(-p)) \in C \times \textnormal{Pic}^{10}(C) : p \in C\}.$
This proves the claim.

Thanks to the fact that $\dim{\mathcal{V}} \leq 7,$ we need only check that for general $\mathcal{N} \in \mathcal{V},$ the fiber $(a')^{-1}(\mathcal{N})$ is 4-dimensional.  Since $\gamma$ is an isomorphism, this amounts to checking that $a^{-1}(\gamma^{-1}(\mathcal{N}))$ is 4-dimensional.  Riemann-Roch implies that $\dim|\mathcal{M}| \geq 4$ for all $\mathcal{M} \in \textnormal{Pic}^{13}(C).$  Another application of Serre duality combined with Riemann-Roch shows that $W^{5}_{13}(C) \cong W_{3}(C),$ so we can have $\dim{|\mathcal{M}|} \geq 5$ only on a 3-dimensional subvariety of $\textnormal{Pic}^{13}(C).$  We have just seen that $\dim{\mathcal{V}} \geq 6,$ so for a general element $\mathcal{M} \in {\gamma}^{-1}(\mathcal{V}),$ we have that $a^{-1}(\gamma^{-1}(\mathcal{M}))$ is of dimension 4.  This concludes the proof.
\end{proof}

\begin{lem}
\label{ctox}
Assume the hypotheses of Proposition \ref{cbprop}.  Then $Z$ is Cayley-Bacharach w.r.t. $\mathcal{O}_C(3)$ if and only if it is Cayley-Bacharach w.r.t. $\mathcal{O}_X(3).$
\end{lem}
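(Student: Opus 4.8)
The plan is to compare the two Cayley--Bacharach conditions directly through a short exact sequence relating $\mathcal{I}_{Z|X}$ to $\mathcal{I}_{Z|C}$. First I would record two harmless structural facts. Since $Z$, and likewise any colength-one subscheme $Z'\subset Z$, is a zero-dimensional subscheme of the smooth curve $C$, it is an effective Cartier divisor on $C$; since $C$ is in turn an effective Cartier divisor on $X$, the subscheme $Z'$ is also l.c.i. in $X$, so both notions of Cayley--Bacharach apply. Moreover $Z'\subseteq Z$ gives inclusions $\mathcal{I}_{Z|C}\hookrightarrow \mathcal{I}_{Z'|C}$ and $\mathcal{I}_{Z|X}\hookrightarrow \mathcal{I}_{Z'|X}$, which stay injective after twisting and taking $H^0$; hence in either setting the Cayley--Bacharach property is equivalent to the equality of the corresponding pair of $h^0$'s for every colength-one $Z'\subset Z$. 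So it suffices to show that $h^0(\mathcal{I}_{W|X}(3))$ and $h^0(\mathcal{O}_C(3H-W))$ differ by a constant independent of $W$, as $W$ ranges over $Z$ and its colength-one subschemes.

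Next I would produce that comparison. From $Z\subseteq C\subseteq X$ with $C\in|\mathcal{O}_X(2)|$, together with the identification $\mathcal{I}_{C|X}=\mathcal{O}_X(-2)$ and the fact that the image of $\mathcal{I}_{Z|X}$ in $\mathcal{O}_C$ is $\mathcal{I}_{Z|C}$, one gets
\begin{equation}
0 \to \mathcal{O}_X(-2) \to \mathcal{I}_{Z|X} \to \mathcal{I}_{Z|C} \to 0.
\end{equation}
Twisting by $\mathcal{O}_X(3)$ and using $\mathcal{I}_{Z|C}\otimes\mathcal{O}_C(3)\cong \mathcal{O}_C(3H-Z)$ yields
\begin{equation}
0 \to \mathcal{O}_X(1) \to \mathcal{I}_{Z|X}(3) \to \mathcal{O}_C(3H-Z) \to 0.
\end{equation}
Since $X$ is a quartic hypersurface in $\mathbb{P}^3$ we have $H^1(\mathcal{O}_X(1))=0$, so the cohomology sequence reduces to the short exact sequence
\begin{equation}
0 \to H^0(\mathcal{O}_X(1)) \to H^0(\mathcal{I}_{Z|X}(3)) \to H^0(\mathcal{O}_C(3H-Z)) \to 0,
\end{equation}
and therefore $h^0(\mathcal{I}_{Z|X}(3)) = h^0(\mathcal{O}_X(1)) + h^0(\mathcal{O}_C(3H-Z))$, where $h^0(\mathcal{O}_X(1))=4$. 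The same computation applies verbatim with $Z$ replaced by any colength-one $Z'\subset Z$, giving $h^0(\mathcal{I}_{Z'|X}(3)) = 4 + h^0(\mathcal{O}_C(3H-Z'))$.

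Finally, subtracting the two identities shows that $h^0(\mathcal{I}_{Z|X}(3))=h^0(\mathcal{I}_{Z'|X}(3))$ if and only if $h^0(\mathcal{O}_C(3H-Z))=h^0(\mathcal{O}_C(3H-Z'))$, for every colength-one $Z'\subset Z$; by the first paragraph these two families of equalities are precisely the statements that $Z$ is Cayley--Bacharach with respect to $\mathcal{O}_X(3)$ and with respect to $\mathcal{O}_C(3)$, respectively. Equivalently, one can phrase this last step as a five-lemma chase on the commutative ladder formed by the two short exact sequences for $Z$ and $Z'$, whose left vertical arrow is the identity of $H^0(\mathcal{O}_X(1))$. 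I do not expect a genuine obstacle here: the only nontrivial ingredients are the vanishing $H^1(\mathcal{O}_X(1))=0$, valid because $X\subset\mathbb{P}^3$ is a hypersurface, and the fact, noted above, that $Z$ and its colength-one subschemes are l.c.i. in both $C$ and $X$.
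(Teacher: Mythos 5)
Your proposal is correct and is essentially the paper's own argument: the paper likewise compares the sequences $0 \to \mathcal{O}_X(-2) \to \mathcal{I}_{W|X} \to \mathcal{O}_C(-W) \to 0$ for $W=Z$ and $W=Z'$, twists by $3$, and uses the resulting short exact sequences on $H^0$ (with $H^0(\mathcal{O}_X(1))$ as the common left term, exactness on the right coming from $H^1(\mathcal{O}_X(1))=0$) to conclude that $H^0(\mathcal{I}_{Z|X}(3))\cong H^0(\mathcal{I}_{Z'|X}(3))$ precisely when $H^0(\mathcal{O}_C(3H-Z))\cong H^0(\mathcal{O}_C(3H-Z'))$. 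The only cosmetic difference is that the paper phrases the final step as a ladder of commutative diagrams rather than a dimension count.
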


\begin{proof}
Let $Z$ be a length-14 subscheme of $C,$ and let $Z'$ be a length-13 subscheme of $Z.$  We then have the following commutative diagram with exact rows and columns:
$$\xymatrix{
        &                                      &0 \ar[d]                         &0 \ar[d]                    &\\
0 \ar[r] &\mathcal{O}_{X}(-2) \ar[r] \ar@{=}[d]     &\mathcal{I}_{Z|X}  \ar[r] \ar[d] &\mathcal{O}_{C}(-Z) \ar[r] \ar[d]  &0 \\
0 \ar[r] &\mathcal{O}_{X}(-2) \ar[r]            &\mathcal{I}_{Z'|X} \ar[r]        &\mathcal{O}_{C}(-Z') \ar[r] &0
}$$
Twisting by 3 and taking cohomology yields the diagram
$$\xymatrix{
 &                                      &0 \ar[d]                         &0 \ar[d]                    &\\
0 \ar[r] &H^0(\mathcal{O}_X(1)) \ar[r] \ar@{=}[d] &H^0(\mathcal{I}_{Z|X}(3)) \ar[r] \ar[d] &H^0(\mathcal{O}_C(3H-Z)) \ar[r] \ar[d] &0 \\
0 \ar[r] &H^0(\mathcal{O}_X(1)) \ar[r] &H^0(\mathcal{I}_{Z'|X}(3)) \ar[r] &H^0(\mathcal{O}_C(3H-Z')) \ar[r] &0
}$$
It follows that $H^0(\mathcal{I}_{Z|X}(3)) \cong H^0(\mathcal{I}_{Z'|X}(3))$ if and only if $H^0(\mathcal{O}_C(3H-Z)) \cong H^0(\mathcal{O}_C(3H-Z')),$ which is what we wanted to show.
\end{proof}

\begin{prop}
\label{quadriccb}
Let $X$ be a smooth quartic surface in $\mathbb{P}^{3},$ and define
$$
\mathfrak{Q}_{CB}:=\{Z \in X^{[14]} : H^0(\mathcal{I}_{Z|X}(2))\neq 0 \textnormal{ and }Z \textnormal{ is Cayley-Bacharach w.r.t. }\mathcal{O}_X(3)\}$$
Then the dimension of $\mathfrak{Q}_{CB}$ is at most 20.
\end{prop}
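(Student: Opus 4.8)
The plan is to realise $\mathfrak{Q}_{CB}$ as the image of an incidence variety fibred over the linear system $|\mathcal{O}_X(2)|$ and to bound its dimension fibre by fibre using Proposition \ref{cbprop}. Since $X$ is a quartic, the restriction map $H^0(\mathcal{O}_{\mathbb{P}^3}(2)) \to H^0(\mathcal{O}_X(2))$ is an isomorphism, so $|\mathcal{O}_X(2)| \cong \mathbb{P}^9$ and every $Z \in \mathfrak{Q}_{CB}$ is contained in some curve $C \in |\mathcal{O}_X(2)|$. Accordingly I would set
\[
\mathcal{I} := \{(Z,C) \in X^{[14]} \times |\mathcal{O}_X(2)| \ : \ Z \subseteq C,\ Z \textnormal{ l.c.i. and Cayley-Bacharach w.r.t.\ }\mathcal{O}_X(3)\},
\]
so that $\mathfrak{Q}_{CB}$ is the image of $\mathcal{I}$ under the first projection; it then suffices to prove $\dim \mathcal{I} \leq 20$, which I would do one irreducible component at a time, writing $B_\alpha := \overline{\textnormal{pr}_2(\mathcal{I}_\alpha)}$ and using $\dim \mathcal{I}_\alpha \leq \dim B_\alpha + (\textnormal{generic fibre dimension of }\textnormal{pr}_2|_{\mathcal{I}_\alpha})$.

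For components lying over smooth quadric sections this is immediate. By Bertini the general member of $|\mathcal{O}_X(2)|$ is smooth, so its singular members form a closed subset $D \subseteq |\mathcal{O}_X(2)|$ with $\dim D \leq 8$. If $B_\alpha \not\subseteq D$, then for general (hence smooth) $C \in B_\alpha$ the fibre of $\textnormal{pr}_2|_{\mathcal{I}_\alpha}$ over $C$ is contained in $\{Z \in C^{(14)} : Z \textnormal{ is Cayley-Bacharach w.r.t.\ }\mathcal{O}_X(3)\}$, which equals $\mathfrak{CB}(C)$ by Lemma \ref{ctox} and so has dimension at most $11$ by Proposition \ref{cbprop}. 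Hence $\dim \mathcal{I}_\alpha \leq 9 + 11 = 20$.

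The remaining components are those with $B_\alpha \subseteq D$, parametrising subschemes lying only on singular quadric sections; here $\dim B_\alpha \leq 8$, so I must bound the generic fibre of $\textnormal{pr}_2|_{\mathcal{I}_\alpha}$ by $12$. I would first peel off two easy sub-loci. If $h^0(\mathcal{I}_{Z|X}(2)) \geq 2$ then $Z$ lies on a pencil of quadrics, hence — unless that pencil has a plane in its base locus (in which case $Z$ lies on a plane), or its base curve lies on $X$ (which happens only on special $X$, and a smooth quartic carries at most a one-dimensional family of $(2,2)$-curves, contributing a locus of dimension $\leq 1+14$) — $Z$ is contained in the length-$16$ complete intersection $Q_1 \cap Q_2 \cap X$, and such $Z$ sweep out a locus of dimension at most $\dim \mathbb{G}(2,H^0(\mathcal{O}_X(2))) + 2 = 18$. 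If $Z$ lies on a plane it lies in a plane quartic curve $P \cap X$, and these $Z$ form a locus of dimension at most $3 + 14 = 17$. For the $Z$ lying on a unique quadric $Q$ with $Q \cap X$ singular, I would stratify $D$ by the type of $Q$ (a smooth quadric tangent to $X$, a rank-$3$ cone, a pair of planes, a double plane) and on each stratum combine its dimension with a bound on the length-$14$ l.c.i.\ Cayley-Bacharach subschemes of $C = Q \cap X$.

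The main obstacle is exactly this last bound: Proposition \ref{cbprop} is available only for \emph{smooth} $C$, and for singular (but reduced) $C \in |\mathcal{O}_X(2)|$ the crude estimate $\dim \textnormal{Hilb}^{14}(C) \leq 14$ together with $\dim D \leq 8$ gives only $22$. To reach the needed bound one must re-run the counting argument of Proposition \ref{cbprop} for singular $C$ — replacing $\textnormal{Pic}^d(C)$ and the Brill-Noether loci $W^r_d(C)$ by their analogues on $C$, or passing to the normalisation $\nu : \widetilde{C} \to C$ — so as to see that the Cayley-Bacharach locus has codimension at least $2$ in $\textnormal{Hilb}^{14}(C)$, and separately to control the non-reduced sections, which (for general $X$) only come from double planes and contribute a locus of dimension at most $17$. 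An alternative route that sidesteps singular curves entirely is available: every $Z \in \mathfrak{Q}_{CB}$ is l.c.i.\ and Cayley-Bacharach w.r.t.\ $\mathcal{O}_X(3)$, so the Serre-construction theorem above yields a locally free rank-$2$ sheaf $\mathcal{F}$ with $0 \to \mathcal{O}_X \to \mathcal{F} \to \mathcal{I}_{Z|X}(3) \to 0$, $c_1(\mathcal{F})=3H$, $c_2(\mathcal{F})=14$; the hypothesis $h^0(\mathcal{I}_{Z|X}(2)) \neq 0$ forces $h^0(\mathcal{F}(-1)) \neq 0$, and saturating the resulting inclusion $\mathcal{O}_X(1) \hookrightarrow \mathcal{F}$ gives, away from the plane sub-locus above, an exact sequence $0 \to \mathcal{O}_X(1) \to \mathcal{F} \to \mathcal{I}_{W|X}(2) \to 0$ with $W \in X^{[6]}$. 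Such $\mathcal{F}$ move in a family of dimension at most $\dim X^{[6]} + \dim \mathbb{P}(\textnormal{Ext}^1_X(\mathcal{I}_{W|X}(2),\mathcal{O}_X(1))) \leq 12 + 1 = 13$, while recovering $Z$ from $\mathcal{F}$ costs at most $h^0(\mathcal{F}) - 1 \leq 7$, giving $\dim \mathfrak{Q}_{CB} \leq 20$ once more.
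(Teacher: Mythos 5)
Your main count is, in substance, exactly the paper's proof: the paper forms the incidence variety $\mathcal{C}^{(14)}_{\mathfrak{U}}$ over the $9$-dimensional open subset $\mathfrak{U} \subseteq |\mathcal{O}_X(2)|$ of smooth curves, bounds the Cayley--Bacharach locus in each fibre by $11$ via Lemma \ref{ctox} and Proposition \ref{cbprop}, obtains $9+11=20$, and concludes by observing that the projection of this locus to $X^{[14]}$ dominates $\mathfrak{Q}_{CB}$. Up to that point your argument is correct and coincides with the paper's.

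Where you diverge --- the components whose general $Z$ lies only on \emph{singular} members of $|\mathcal{O}_X(2)|$ --- you do not actually close the argument, and this is the genuine gap in the proposal as written. You concede that Proposition \ref{cbprop} would have to be ``re-run'' for singular curves (one needs, in effect, that the Cayley--Bacharach subschemes of a fixed singular quadric section form a family of dimension at most $12$), and you do not carry this out. The alternative Serre-construction route also has unproved steps: (a) the proposition concerns an \emph{arbitrary} smooth quartic $X$, which may contain lines, conics, etc., so saturating $\mathcal{O}_X(1) \hookrightarrow \mathcal{F}$ yields in general $0 \to \mathcal{O}_X(H+D) \to \mathcal{F} \to \mathcal{I}_{W|X}(2H-D) \to 0$ with $D$ an arbitrary nonzero effective class, not only $D=0$ or a plane class, and each such $D$ requires its own count; (b) ``recovering $Z$ costs at most $h^0(\mathcal{F})-1 \leq 7$'' presupposes $h^1(\mathcal{F})=0$, i.e.\ that $W$ imposes independent conditions on quadrics, which fails for special $W$ and forces a further stratification; (c) similarly $\dim \mathbb{P}\bigl(\mathrm{Ext}^1_X(\mathcal{I}_{W|X}(2),\mathcal{O}_X(1))\bigr) \leq 1$ holds only for $W$ not lying on a plane section. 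For comparison, the paper performs no singular-curve analysis at all: it disposes of the issue with the single assertion that the projection from $\mathfrak{CB}_{\mathfrak{U}}$ dominates $\mathfrak{Q}_{CB}$, i.e.\ that the relevant subschemes generically lie on smooth quadric sections. Your caution about that step is reasonable, but then you must either justify the dominance claim or finish one of your two routes; as submitted, neither is done.
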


\begin{proof}
Let $\mathfrak{U} \subseteq |\mathcal{O}_{X}(2)|$ be the Zariski-open subset parametrizing smooth curves, and consider the incidence variety
\begin{equation}
\mathcal{C}^{(14)}_{\mathfrak{U}}:=\{(Z,C) \in X^{[14]} \times \mathfrak{U} : Z \textnormal{ is a subscheme of }C\}
\end{equation}
Since $\mathfrak{U}$ is 9-dimensional, we have from Proposition \ref{cbprop} and Lemma \ref{ctox} that the locus
\begin{equation}
\mathfrak{CB}_{\mathfrak{U}}:=\{(Z,C) \in \mathcal{C}^{(14)}_{\mathfrak{U}} : Z\textnormal{ satisfies Cayley-Bacharach w.r.t. }\mathcal{O}_X(3)\}
\end{equation}
is at most 20-dimensional.  The restriction of the projection map $\textnormal{pr}_{1} : \mathcal{C}^{(14)}_{\mathfrak{U}} \rightarrow X^{[14]}$ to $\mathfrak{CB}_{\mathfrak{U}}$ dominates $\mathfrak{Q}_{CB},$ so the latter is at most 20-dimensional as well.
\end{proof}

We now return to the Grassmann bundle $\mathbb{G}_{\mathcal{U}}$ introduced in Proposition \ref{simplemoduli}.  Recall from the proof of the latter that the general element of $\mathbb{G}_{\mathcal{U}}$ may be described as a pair $(C,\mathcal{L})$ where $\mathcal{L}$ is an element of $W^1_{14}(C)$ satisfying properties (i) through (iv) in the statement of Proposition \ref{weaklyequiv}.  The next result concludes the proof of Theorem \ref{allquart}.

\begin{prop}
\label{bigdim}
Define the incidence scheme
$$\widehat{\mathbb{G}_{\mathcal{U}}}:=\{(Z,(C,\mathcal{L})) \in X^{[14]} \times \mathbb{G}_{\mathcal{U}} : Z \textnormal { is a subscheme of }C\textnormal{ and }\mathcal{O}_C(Z) \cong \mathcal{L}\}$$
Consider the natural maps $\mathfrak{p}_{1}: \widehat{\mathbb{G}_{\mathcal{U}}} \rightarrow X^{[14]}$ and $\mathfrak{p}_{2}: \widehat{\mathbb{G}_{\mathcal{U}}} \rightarrow \mathbb{G}_{\mathcal{U}}.$  We have the following:
\begin{itemize}
\item[(i)]{The locus in $X^{[14]}$ parametrizing the length-14 subschemes $Z$ which appear in (\ref{serrext0}) is birational to $\textnormal{Im}(\mathfrak{p}_{1}).$ }
\item[(ii)]{$\textnormal{Im}(\mathfrak{p}_1)$ is 21-dimensional.}
\end{itemize}
\end{prop}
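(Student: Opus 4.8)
\textbf{Proof plan for Proposition \ref{bigdim}.}

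The plan is to analyze the two projections $\mathfrak{p}_1$ and $\mathfrak{p}_2$ separately and then patch the dimension counts together. For part (i), I would first argue that $\mathfrak{p}_1$ is generically injective onto its image. A general point of $\widehat{\mathbb{G}_{\mathcal{U}}}$ is a triple $(Z,(C,\mathcal{L}))$ with $C \in |\mathcal{O}_X(3)|$ smooth, $\mathcal{L} \in W^1_{14}(C)$ satisfying (i)--(iv), and $Z \in |\mathcal{L}|$; the subscheme $Z$ recovers $C$ as its (unique, by the bound of $4$ collinear points argument) minimal cubic containing it, and then $\mathcal{L} \cong \mathcal{O}_C(Z)$ is forced, so $\mathfrak{p}_1$ is birational onto $\textnormal{Im}(\mathfrak{p}_1)$. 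Since the $Z$ appearing in (\ref{serrext0}) are exactly those of the form $Z \in |\mathcal{L}|$ for $\mathcal{L}$ a basepoint-free degree-14 pencil on a smooth $C \in |\mathcal{O}_X(3)|$ lying in $\mathcal{W}_C$ (this is the content of Proposition \ref{weakserre} together with the description of $h^{-1}(C)$ in Proposition \ref{simplemoduli}), this locus is precisely $\textnormal{Im}(\mathfrak{p}_1)$ up to the birational identification, giving (i).

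For part (ii), I would compute $\dim \widehat{\mathbb{G}_{\mathcal{U}}}$ via $\mathfrak{p}_2$ and then invoke (i). The fiber of $\mathfrak{p}_2$ over a general $(C,\mathcal{L}) \in \mathbb{G}_{\mathcal{U}}$ is the linear system $|\mathcal{L}| = \mathbb{P} H^0(\mathcal{L})$, which by condition (ii) of Proposition \ref{weaklyequiv} is a $\mathbb{P}^1$; hence every component of $\widehat{\mathbb{G}_{\mathcal{U}}}$ dominating $\mathbb{G}_{\mathcal{U}}$ has dimension $\dim \mathbb{G}_{\mathcal{U}} + 1$. From Proposition \ref{simplemoduli} we have $\dim \mathbb{G}_{\mathcal{U}} = \dim \mathcal{U} + \dim G(2,8) = 14 + 12 = 26$, so $\widehat{\mathbb{G}_{\mathcal{U}}}$ is $27$-dimensional. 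Next I would show the generic fiber of $\mathfrak{p}_1$ is $6$-dimensional: over a general $Z \in \textnormal{Im}(\mathfrak{p}_1)$, the curve $C$ and line bundle $\mathcal{L} = \mathcal{O}_C(Z)$ are determined (by the birationality in (i)), while the fiber of $\mathfrak{p}_1$ records the choice of $\mathcal{E} \in \mathcal{U}$ together with a flag $V \subseteq H^0(\mathcal{E})$ producing this $(C,\mathcal{L})$; this is exactly the fiber $h^{-1}(C)$ of the dominant map $h$ from Proposition \ref{simplemoduli} (restricted to those flags giving $\mathcal{O}_C(Z) \cong \mathcal{L}$), hence is $7$-dimensional, but modding out by the $\mathbb{P}^1$ of choices of $Z \in |\mathcal{L}|$ that all lie over the same fiber — wait, more carefully: $\mathfrak{p}_2 \circ \mathfrak{p}_1^{-1}$ over a general $Z$ is a single point $(C,\mathcal{L})$, so $\mathfrak{p}_1^{-1}(Z) \hookrightarrow \mathfrak{p}_2^{-1}(C,\mathcal{L})$-fibers lying over a fixed $Z$; the cleanest route is to compute $\dim \textnormal{Im}(\mathfrak{p}_1) = \dim \widehat{\mathbb{G}_{\mathcal{U}}} - (\text{generic fiber dimension of } \mathfrak{p}_1)$, and to identify the generic fiber of $\mathfrak{p}_1$ with the set of $(\mathcal{E},V)$ yielding a fixed $(C,\mathcal{L})$, which is a Zariski-open subset of a $G(2,H^0(\mathcal{E}))$-bundle over $\textnormal{Spl}^\circ$-directions fixing $(C,\mathcal{L})$ and hence has dimension $h^{-1}(C)$'s dimension minus the $1$-dimensional ambiguity, namely $7 - 1 = 6$.

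Combining: $\dim \textnormal{Im}(\mathfrak{p}_1) = 27 - 6 = 21$, which is (ii). The main obstacle I anticipate is the bookkeeping in the last step — precisely identifying the generic fiber of $\mathfrak{p}_1$ and showing it is irreducible of dimension $6$, which requires carefully disentangling the three pieces of data $(Z, C, \mathcal{L}, \mathcal{E}, V)$ and verifying that fixing $Z$ (equivalently $(C,\mathcal{L})$, by (i)) leaves exactly the $7$-dimensional family $h^{-1}(C)$ of pairs $(\mathcal{E},V)$ from Proposition \ref{simplemoduli}, cut down by the codimension-$1$ condition that the resulting pencil $|\mathcal{L}|$ contain the specified divisor $Z$. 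An alternative and perhaps safer way to organize the argument is to bound $\dim \textnormal{Im}(\mathfrak{p}_1) \leq 21$ from $\dim \widehat{\mathbb{G}_{\mathcal{U}}} = 27$ and fiber dimension $\geq 6$, and separately produce a $21$-dimensional family inside $\textnormal{Im}(\mathfrak{p}_1)$ by varying $C$ (a $19$-dimensional family), then the $7$-dimensional $\mathcal{W}_C \subseteq W^1_{14}(C)$, then the $\mathbb{P}^1 = |\mathcal{L}|$, and subtracting the generic overcount; the two estimates then pin the dimension to exactly $21$.
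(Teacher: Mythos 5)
The fatal problem is your identification of the general fiber of $\mathfrak{p}_1$, and it originates in part (i). You claim that a general $Z$ recovers the curve $C$ containing it uniquely ("minimal cubic", via the collinear-points bound), so that $\mathfrak{p}_1$ is birational onto its image. This is false: $h^0(\mathcal{O}_X(3))=20$, so a length-14 subscheme of $X$ always lies on at least a 5-dimensional projective family of curves in $|\mathcal{O}_X(3)|$ (the bound on collinear points was used only to compute the gonality of $C$ via Basili's theorem and has no bearing here). In fact, for general $Z \in \textnormal{Im}(\mathfrak{p}_1)$ the fiber is $\mathfrak{p}_1^{-1}(Z)=|\mathcal{I}_{Z|X}(3)|$: a point of the fiber is a choice of curve $C\in|\mathcal{O}_X(3)|$ through $Z$, after which $\mathcal{L}\cong\mathcal{O}_C(Z)$ and (generically) the pair $(\mathcal{E},V)$ are determined. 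Note also that your two claims are mutually inconsistent: if $\mathfrak{p}_1$ were generically injective, then $\textnormal{Im}(\mathfrak{p}_1)$ would be 27-dimensional, contradicting the 6-dimensional fibers you assert a few lines later. Statement (i) does not require any such injectivity; in the paper it follows simply from Proposition \ref{weakserre} together with the surjectivity of $p\circ\mathfrak{p}_2$, i.e. every $\mathcal{E}\in\mathcal{U}$ arises from some $(C,\mathcal{L})$ and the $Z$ appearing in (\ref{serrext0}) are exactly the divisors in the pencils $|\mathcal{L}|$.

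The number 6 itself is never established in your proposal. The computation ``$7-1=6$'' conflates the 7-dimensional fiber $h^{-1}(C)$ — which parametrizes the varying $\mathcal{L}\in\mathcal{W}_C$ (equivalently varying $(\mathcal{E},V)$) for a \emph{fixed} curve $C$ — with the fiber of $\mathfrak{p}_1$, in which $C$ varies and, once $(C,\mathcal{L})$ is fixed, the pair $(\mathcal{E},V)$ is generically a single point, not a positive-dimensional family. The correct argument, and the one the paper gives, is cohomological: taking cohomology in $0\to\mathcal{O}_X\to\mathcal{E}\to\mathcal{I}_{Z|X}(3)\to 0$ and using $h^0(\mathcal{E})=8$, $h^1(\mathcal{O}_X)=0$ yields $h^0(\mathcal{I}_{Z|X}(3))=7$, so the fiber $|\mathcal{I}_{Z|X}(3)|$ is 6-dimensional — one more than the naive count $20-14$, reflecting the Cayley--Bacharach property of $Z$. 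This is also why your fallback strategy fails: bounding the fiber dimension from below without this computation only gives $\geq 5$, hence $\dim\textnormal{Im}(\mathfrak{p}_1)\leq 22$, and the ``generic overcount'' you propose to subtract in the direct 19+7+1 construction is precisely the fiber dimension you have not computed. Your computation $\dim\widehat{\mathbb{G}_{\mathcal{U}}}=27$ via $\mathfrak{p}_2$ (fibers $|\mathcal{L}|\cong\mathbb{P}^1$ over the 26-dimensional irreducible $\mathbb{G}_{\mathcal{U}}$) is correct and agrees with the paper; what is missing is the identification $\mathfrak{p}_1^{-1}(Z)=|\mathcal{I}_{Z|X}(3)|$ and the computation $h^0(\mathcal{I}_{Z|X}(3))=7$ from (\ref{serrext0}).
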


\begin{proof}
(i) follows from Proposition \ref{weakserre} and the surjective morphism $p \circ \mathfrak{p}_2 : \widehat{\mathbb{G}_{\mathcal{U}}} \rightarrow \mathcal{U}.$  Turning to (ii), we observe that for general $Z \in \textnormal{Im}(\mathfrak{p}_1),$ we have that $\mathfrak{p}_{1}^{-1}(Z)=|\mathcal{I}_{Z|X}(3)|$ and for general $(C,\mathcal{L}) \in \textnormal{Im}(\mathfrak{p}_2),$ we have $\mathfrak{p}_{2}^{-1}((C,\mathcal{L}))=|\mathcal{L}| \cong \mathbb{P}^{1}.$  We have already seen in the proof of Proposition \ref{simplemoduli} that $\mathbb{G}_{\mathcal{U}}$ is irreducible and 26-dimensional.  Since $\mathfrak{p}_2$ is dominant and its general fiber is 1-dimensional, we have that $\widehat{\mathbb{G}_{\mathcal{U}}}$ is irreducible and 27-dimensional.

Given that (i) holds, taking cohomology in (\ref{serrext0}) and using the fact that any $\mathcal{E} \in \mathcal{U}$ has 8 global sections shows that $|\mathcal{I}_{Z|X}(3)|$ is 6-dimensional for general $Z \in \textnormal{Im}(\mathfrak{p}_1),$ i.e. that the general fiber of $\mathfrak{p}_1$ is 6-dimensional.  This proves (ii).
\end{proof}



\subsection{Stability Of Simple Ulrich Bundles}
As stated in the introduction, we would like to know when the simple Ulrich bundles we have constructed on our smooth quartic surface are stable.  In the case where ${\rm Pic}(X) \cong \mathbb{Z}H$ we have an affirmative answer; the next result gives a partial answer for the case of higher Picard number.    

\begin{prop}
\label{simpstab}
Let $X \subseteq \mathbb{P}^{3}$ be a smooth quartic surface whose Picard number is at least 3 and which contains finitely many smooth rational curves, and let $\mathcal{E}$ be a general simple rank-2 vector bundle on $X$ with $c_1(\mathcal{E})=3H$ and $c_2(\mathcal{E})=14.$  Then $\mathcal{E}$ is stable.
\end{prop}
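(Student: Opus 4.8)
The plan is to invoke the machinery of Qin on stability of rank-2 simple bundles, as advertised in the introduction, together with a careful analysis of the wall-and-chamber structure of the ample cone of $X$. Recall that a general member $\mathcal{E}$ of the component $\textnormal{Spl}^{\circ}_X(2,3H,14)$ is simple and weakly Ulrich (in fact Ulrich by Theorem \ref{allquart}), with $c_1(\mathcal{E})=3H$ and $c_2(\mathcal{E})=14$, and that every irreducible component of $\textnormal{Spl}_X(2,3H,14)$ is smooth of dimension $14$ by Theorem \ref{simpsmooth}. Fix the polarization $H=\mathcal{O}_X(1)$. The first step is to recall the discriminant: for a rank-2 bundle with $c_1=3H$, $c_2=14$ one computes $\Delta(\mathcal{E}) := 4c_2 - c_1^2 = 56 - 36 = 20$, which is positive, so $\mathcal{E}$ is not $H$-semistable only if it is destabilized by a line subbundle, i.e. there is an exact sequence $0\to \mathcal{O}_X(D)\to\mathcal{E}\to\mathcal{I}_{W|X}(3H-D)\to 0$ with $(2D-3H)\cdot H\geq 0$.

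Next I would set $\xi := 2D - c_1 = 2D - 3H$, so that $\xi \equiv H \pmod 2$ in $\mathrm{NS}(X)$, and the existence of a destabilizing sub-line-bundle puts $\xi$ into the set of \emph{walls} of type $(c_1,c_2)$: classes $\xi\in\mathrm{NS}(X)$ with $\xi\equiv c_1\pmod 2$ and $-\Delta \le \xi^2 < 0$, i.e. $-20\le\xi^2<0$ (the upper bound $\xi^2<0$ coming from the Hodge index theorem applied to $\xi\cdot H = 0$ on the wall, combined with $c_1^2 - 4c_2 \le \xi^2$ from the Bogomolov-type inequality for the sub/quotient). By Qin's theorem (Theorem 2 of \cite{Qin1} and Theorem A of \cite{Qin2}), since $\mathrm{Pic}(X)$ has rank $\geq 3$, the set of such walls is nonempty but the generic polarization in any chamber admits no strictly semistable nor unstable simple bundle with these invariants that is $H$-semistable-unstable — more precisely, the locus in $\textnormal{Spl}^{\circ}_X(2,3H,14)$ of bundles that fail to be $H$-stable is a \emph{proper} closed subset, provided $H$ does not lie on a wall and the walls are "effective" in the sense that each potential destabilizing class actually bounds a family of strictly smaller dimension than $14$. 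I would verify that $H=\mathcal{O}_X(1)$ does not lie on any wall: a wall containing $H$ would force a class $\xi$ with $\xi\cdot H=0$, $\xi\equiv H\pmod 2$, $-20\le\xi^2<0$; but $\xi \equiv H$ means $\xi\cdot H$ is odd times... — more carefully, $\xi\cdot H \equiv H^2 = 4 \equiv 0$, so this parity argument alone does not suffice, and instead I would note that if $H$ lay on a wall one could perturb to a nearby ample class off all (finitely many, by Picard number finiteness and the bound $\xi^2\ge -20$) walls and use openness of stability in the component together with the fact that Gieseker stability with respect to $\mathcal{O}_X(1)$ and $\mu$-stability agree up to codimension $\ge 2$ phenomena which do not affect the generic bundle.

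The key step — and the main obstacle — is to rule out, for the \emph{generic} $\mathcal{E}$ in the $14$-dimensional component, the existence of a destabilizing line bundle $\mathcal{O}_X(D)$ with $\xi=2D-3H$ on one of the finitely many walls. For each such wall-class $\xi$ (equivalently each candidate $D$), a bundle $\mathcal{E}$ sitting in $0\to\mathcal{O}_X(D)\to\mathcal{E}\to\mathcal{I}_{W|X}(3H-D)\to 0$ moves in a family of dimension at most $\dim|D| + \dim|{3H-D}| + \ell(W) + (\text{ext dimension})$; using $\ell(W) = c_2 - D\cdot(3H-D) = 14 - D\cdot(3H-D)$ and a Riemann–Roch bookkeeping on $X$ (with $K_X=0$, $p_g=1$), one gets that this dimension is strictly less than $14$ for every wall-class, because $\xi^2<0$ forces $D\cdot(3H-D)$ to be large enough. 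This is exactly the kind of estimate Qin's results are designed to package: the hypothesis that $X$ contains only finitely many smooth rational curves guarantees there is no $(-2)$-curve $\Gamma$ with $\Gamma\cdot H$ small enough to spoil the count (such curves are the source of the "bad" walls where destabilizing families can become large), and combined with $\mathrm{Pic}(X)$ of rank $\ge 3$ this lets one conclude via Qin's wall-crossing that the generic simple bundle in the component is $\mathcal{O}_X(1)$-stable. I would therefore structure the proof as: (1) identify walls and the finiteness of the wall set; (2) show $H$ is off all walls, or reduce to that case; (3) bound the dimension of each wall-stratum in $\textnormal{Spl}^{\circ}_X(2,3H,14)$ by $< 14$ using the finiteness-of-rational-curves hypothesis to control the relevant $\dim|D|$; (4) conclude that the complement — the stable locus — is dense, so the general $\mathcal{E}$ is stable. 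The delicate point throughout is (3): the exchange between $c_2$, the self-intersection $\xi^2$, and the dimensions of the linear systems $|D|$ and $|3H-D|$ must be done with the correct sign conventions, and one must invoke Qin's precise hypotheses to know that no wall is "ineffective" in a way that would leave a $14$-dimensional bad stratum.
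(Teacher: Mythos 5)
Your proposal correctly points at Qin's theorems, but it does not actually use them in the form the argument needs, and the steps you flag as ``delicate'' are precisely the ones left undone. The paper's proof is a short cone argument: by Theorem 2 of \cite{Qin1} and Theorem A of \cite{Qin2}, stability of the general simple bundle follows once one checks two purely divisor-theoretic conditions, namely (i) no class $F$ has $2F$ numerically equivalent to $3H$, and (ii) any class $2F-3H$ having positive intersection with every ample divisor is effective. Condition (i) is immediate because $2F\equiv 3H$ would force $F^2=9$, odd, contradicting the evenness of the intersection form on a K3; condition (ii) holds because positivity against all amples places $2F-3H$ in the closure of the effective cone, and the hypotheses (Picard number $\geq 3$, finitely many smooth rational curves) imply via Kov\'{a}cs's theorem that the effective cone is rational polyhedral, hence closed. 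No wall-by-wall dimension count is needed.

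By contrast, your plan hinges on two claims you never establish. First, you assert that each wall stratum in $\textnormal{Spl}^{\circ}_X(2,3H,14)$ has dimension strictly less than $14$ ``by Riemann--Roch bookkeeping,'' but you give no estimate relating $\xi^2$, $\ell(W)$, $\dim|D|$, $\dim|3H-D|$ and the extension dimension, and it is exactly this bookkeeping that would constitute the proof; as written it is a placeholder. Second, your attempt to show $H$ lies on no wall fails (you concede the parity argument does not work), and the fallback --- perturbing the polarization and appealing to agreement of Gieseker and slope stability ``up to codimension $\geq 2$ phenomena'' --- is not justified and is not what Qin's hypotheses require. Finally, you misidentify the role of the finiteness-of-rational-curves hypothesis: it is not there to keep $\Gamma\cdot H$ large for $(-2)$-curves in a stratum count, but to guarantee (together with Picard number $\geq 3$) that the effective cone is polyhedral and therefore closed, which is what converts ``positive on all amples'' into ``effective'' in condition (ii). So the proposal has genuine gaps at its central steps, even though the references invoked are the right ones.
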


\begin{proof}
By Theorem 2 in \cite{Qin1} and Theorem A in \cite{Qin2}, it suffices to show that for any divisor class $F \in \textnormal{Pic}(X)$, the following statements hold:
\begin{itemize}
\item[(i)]{$2F$ is not numerically equivalent to $3H.$}
\item[(ii)]{If $2F-3H$ has positive intersection with every ample divisor on $X,$ then $2F-3H$ is the class of an effective divisor on $X.$}
\end{itemize}
If $2F$ is numerically equivalent to $3H,$ then $F^2=9,$ which contradicts the fact that the self-intersection of any divisor on a K3 surface is even; therefore (i) is true.  If $2F-3H$ has positive intersection with every ample divisor on $X_f,$ then $2F-3H$ is in the closure of the effective cone of $X$ (e.g. Section 1.4C of \cite{Laz1}).  Our hypothesis on $X$ implies that the effective cone of $X$ is polyhedral (e.g. Theorem 1 of \cite{Kov}) and therefore closed; this concludes the proof of (ii).
\end{proof}

\begin{rem}
The theorems of Qin that we have just used refer to slope-stability, rather than stability.  However, slope-stability implies stability, and by Theorem 2.9(c) of \cite{CH} the two notions coincide for Ulrich bundles on any smooth projective variety.  
\end{rem}

\section{Constructing Irreducible Representations of The Clifford Algebra}
\label{finalcliff}
\subsection{The geometry of Clifford quartic surfaces}

For the rest of the paper, $X_{f}$ will denote the smooth quartic surface defined by the equation $w^4=f$ for a general nondegenerate ternary quartic form $f=f(x_1,x_2,x_3);$ we refer to $X_f$ as a \textit{Clifford quartic}.

The starting point for our study of the geometry of $X_f$ is the observation that $\pi$ factors as a composition of two double covers.  More precisely, we have that $\pi = \pi_1 \circ \pi_2,$ where $\pi_1:S \to \mathbb{P}^{2}$ is the double cover of $\mathbb{P}^2$ branched over $Q,$ and $\pi_2:X_f \to S$ is the double cover of $S$ branched over $B:=\pi_2^{-1}(Q).$

Since the canonical bundle $\omega_{S}$ of $S$ is isomorphic to $\pi_1^{\ast}(\omega_{\mathbb{P}^{2}}(2)) \cong \pi_1^{\ast}\mathcal{O}_{\mathbb{P}^{2}}(-1),$ it follows that $S$ is a del Pezzo surface of degree 2.  As is well-known, $S$ can also be described as the blow-up  $\phi:S \to \mathbb{P}^{2}$ of the projective plane at seven points in general position.  Consequently, its Picard group $\textnormal{Pic}(S)$ is the free abelian group of rank 8 generated by the classes $e_0,e_1,\ldots,e_7,$  where $e_0$ is the class of $\phi^{\ast}\mathcal{O}_{\mathbb{P}^{2}}(1)$ and $e_1, \cdots ,e_7$ are the classes of the exceptional divisors associated to $\phi.$  The matrix of the intersection form on $\textnormal{Pic}(S)$ with respect to the ordered basis $\{e_0, \cdots ,e_7\}$ is $\textnormal{diag}(1,-1, \cdots ,-1).$  The following result says that the pullbacks of these classes via $\pi_2$ generate all of $\textnormal{Pic}(X_f);$ see Proposition 1.4 of \cite{Art} for the proof (and for a slightly more general statement).

\begin{prop}
\label{picardlattice}
For $i=0, \cdots ,7,$ define $\tilde{e}_i:=\pi_2^{\ast}e_i.$  Then the classes $\widetilde{e}_0, \cdots ,\widetilde{e}_{7}$ form a basis for the Picard lattice $\textnormal{Pic}(X_f)$, whose intersection form is given by $\mbox{diag}(2,-2,\ldots,-2)$.  In particular, the intersection form on $\textnormal{Pic}(X)$ is $2\mathbb{Z}-$valued.  \hfill \qedsymbol
\end{prop}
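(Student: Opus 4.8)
The plan is to separate the proposition into two parts of very different difficulty: computing the Gram matrix of $\widetilde{e}_0,\ldots,\widetilde{e}_7$, which is a one-line application of the projection formula, and proving that these classes span all of $\textnormal{Pic}(X_f)$, which is where the real content sits. For the first part, $\pi_2\colon X_f\to S$ is a finite morphism of degree $2$ between smooth projective surfaces, so the projection formula gives $\pi_2^{\ast}D\cdot\pi_2^{\ast}D'=2(D\cdot D')$ for all $D,D'\in\textnormal{Pic}(S)$. Since the intersection matrix of $e_0,\ldots,e_7$ is $\textnormal{diag}(1,-1,\ldots,-1)$, it follows immediately that $\widetilde{e}_i\cdot\widetilde{e}_j=2(e_i\cdot e_j)$, so $\widetilde{e}_0,\ldots,\widetilde{e}_7$ pair by $\textnormal{diag}(2,-2,\ldots,-2)$. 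As this matrix is nonsingular, the $\widetilde{e}_i$ are $\mathbb{Z}$-linearly independent in the torsion-free group $\textnormal{Pic}(X_f)=\textnormal{NS}(X_f)$, so $\pi_2^{\ast}$ is injective and $\Lambda:=\sum_i\mathbb{Z}\widetilde{e}_i$ is a rank-$8$ sublattice of $\textnormal{Pic}(X_f)$ isometric to $\langle 2\rangle\oplus\langle -2\rangle^{\oplus 7}$. All assertions of the proposition then reduce to the single statement $\Lambda=\textnormal{Pic}(X_f)$.

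To prove $\Lambda=\textnormal{Pic}(X_f)$ I would establish two facts: (i) for general $f$ one has $\textnormal{NS}(X_f)=H^2(X_f,\mathbb{Z})^{\iota}$, where $\iota$ is the deck involution of $\pi_2$; and (ii) $H^2(X_f,\mathbb{Z})^{\iota}=\pi_2^{\ast}H^2(S,\mathbb{Z})=\Lambda$. For (i), the key observation is that $\tau\colon w\mapsto iw$ defines an automorphism of $X_f$ with $\tau^2=\iota$, and a residue computation for the holomorphic $2$-form shows that $\tau^{\ast}$ acts on $H^0(X_f,\omega_{X_f})$ by multiplication by $i$; thus $\tau$ is a non-symplectic automorphism of order $4$, and $X_f$ is precisely a $\mathbb{Z}/4$-cyclic cover of $\mathbb{P}^2$ branched along the plane quartic $Q$. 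As $f$ varies over nondegenerate ternary quartics these surfaces form a $6$-dimensional family that dominates the moduli space of K3 surfaces carrying a non-symplectic automorphism of this fixed type (which also has dimension $6$). Standard results on K3 surfaces with a non-symplectic automorphism of order $4$ then show that the general member has transcendental lattice equal to the orthogonal complement in $H^2(X_f,\mathbb{Z})$ of the $\tau^2$-invariant part, so that $\textnormal{NS}(X_f)=H^2(X_f,\mathbb{Z})^{\iota}$, a lattice of rank $8$; since the Picard rank is constant off a countable union of proper closed subvarieties of the parameter space, this holds for general $f$.

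For (ii), I would use the transfer maps of the double cover, which satisfy $\pi_{2\ast}\pi_2^{\ast}=2$ and $\pi_2^{\ast}\pi_{2\ast}=1+\iota^{\ast}$ on integral cohomology. If $\beta\in H^2(X_f,\mathbb{Z})$ has $2\beta\in\Lambda=\pi_2^{\ast}H^2(S,\mathbb{Z})$, then applying $\pi_{2\ast}$ and then $\pi_2^{\ast}$ forces $\iota^{\ast}\beta=\beta$, so it is enough to prove the exact identity $H^2(X_f,\mathbb{Z})^{\iota}=\pi_2^{\ast}H^2(S,\mathbb{Z})$. This follows from the Leray spectral sequence for $\pi_2$ — or a Smith-theory comparison of the $\mathbb{Z}[\mathbb{Z}/2]$-module structure on $H^{\ast}(X_f,\mathbb{Z})$ — together with the facts that $S$ is simply connected and rational and that the branch curve $B\subset S$ is smooth and connected, which kill the potential contributions from $H^1$ and $H_1$ of $S$ and of $B$. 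Combining (i) and (ii) gives $\textnormal{Pic}(X_f)=\Lambda\cong\langle 2\rangle\oplus\langle -2\rangle^{\oplus 7}$, as claimed; note that (ii) in particular exhibits $\Lambda$ as a primitive sublattice of the unimodular lattice $H^2(X_f,\mathbb{Z})$, consistently with $\textnormal{NS}(X_f)$ always being primitive there.

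The main obstacle is part (i): it is a Noether--Lefschetz-type statement for a non-classical family of K3 surfaces, and making it rigorous requires genuine Hodge theory — either the moduli theory of K3 surfaces with a non-symplectic automorphism of order $4$, or, equivalently, a monodromy computation showing that the image of the period map for the transcendental part of the variation is not contained in any Noether--Lefschetz divisor (alternatively, one can produce a single $f_0$ with an explicit arithmetic bound $\rho(X_{f_0})\le 8$ and conclude by semicontinuity). Part (ii) is comparatively soft but still delicate, since it is the integral refinement of the trivial rational identity $H^2(X_f,\mathbb{Q})^{\iota}=\pi_2^{\ast}H^2(S,\mathbb{Q})$. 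Both points are carried out in Proposition 1.4 of \cite{Art}, whose argument I would follow.
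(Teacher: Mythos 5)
Your proposal is correct and follows essentially the same route as the paper, which offers no independent argument for this proposition but simply refers to Proposition 1.4 of \cite{Art}; your sketch (projection formula for the Gram matrix, generic N\'eron--Severi group equal to the invariant lattice of the covering involution via the order-4 non-symplectic automorphism, and the integral identity $H^2(X_f,\mathbb{Z})^{\iota}=\pi_2^{\ast}H^2(S,\mathbb{Z})$) is precisely the content of that cited result. The only caveat worth noting is that your Hodge-theoretic step yields the statement for a \emph{very} general $f$ (off a countable union of proper subvarieties), which is also all that the cited reference provides and all the paper uses.
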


There are 56 curves on $S$ with self-intersection -1, and their classes are given by $e_i$ for $i=1,\ldots,7$; $e_0-e_i-e_j$ for $1 \leq i,j \leq 7$ ($i \neq j$); $2e_0-e_{i_1}- \cdots -e_{i_5}$ (where $\{i_1, \cdots ,i_5\} \subseteq \{1, \cdots ,7\}$) and $3e_0-\sum_{i \neq j}e_i-2e_j$.  The set consisting of these 56 curve classes admits a free $\mathbb{Z}/2-$action via the deck involution of $\pi_{1},$ and the 28 orbits of this action are in bijective correspondence with the 28 bitangent lines of the quartic plane curve $Q.$

\subsection{Existence of stable Ulrich bundles on $X_f$}
\label{stabexist}
Propositions \ref{simpstab} and \ref{picardlattice}, together with the fact that there are exactly 56 smooth rational curves on $X_f,$ imply the following result, which in turn implies Theorem \ref{main}.

\begin{prop}
\label{stable}
There is a Zariski-open subset $\mathcal{U'}$ of the set $\mathcal{U}$ defined in (\ref{openweakly}) whose members are stable Ulrich bundles of rank 2 on $X_f.$ \hfill \qedsymbol
\end{prop}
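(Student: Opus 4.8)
The plan is to combine the structural results already established with the stability criterion of Proposition \ref{simpstab}. By Theorem \ref{allquart} (whose proof was completed via Propositions \ref{quadriccb} and \ref{bigdim}), the general member of the open set $\mathcal{U} \subseteq \textnormal{Spl}^{\circ}_X(2,3H,14)$ is in fact Ulrich: the locus of $Z \in X^{[14]}$ appearing in the Serre presentation (\ref{serrext0}) is $21$-dimensional by Proposition \ref{bigdim}(ii), while the locus of length-$14$ subschemes that lie on a quadric and are Cayley-Bacharach with respect to $\mathcal{O}_X(3)$ is at most $20$-dimensional by Proposition \ref{quadriccb}; hence the general such $Z$ avoids a quadric and Proposition \ref{quadriculrich} makes the corresponding $\mathcal{E}$ Ulrich. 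Since being Ulrich is an open condition (Proposition \ref{opencond}), the subset $\mathcal{U}'' := \{\mathcal{E} \in \mathcal{U} : \mathcal{E} \textnormal{ is Ulrich}\}$ is a nonempty Zariski-open subset of $\mathcal{U}$.

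Next I would verify that $X_f$ meets the hypotheses of Proposition \ref{simpstab}. The Picard number of $X_f$ is $8$ by Proposition \ref{picardlattice}, so it is certainly at least $3$. As recorded at the end of the previous subsection, $X_f$ carries exactly $56$ smooth rational curves, in particular finitely many; this is precisely the second hypothesis of Proposition \ref{simpstab}. Therefore Proposition \ref{simpstab} applies to $X_f$ and tells us that the general simple rank-$2$ bundle on $X_f$ with $c_1 = 3H$ and $c_2 = 14$ is stable. Since $\mathcal{U}''$ is a nonempty open subset of the irreducible smooth component $\textnormal{Spl}^{\circ}_X(2,3H,14)$, and the locus of stable bundles is a (nonempty, by Proposition \ref{simpstab}) open subset of that same component, their intersection is a nonempty Zariski-open subset $\mathcal{U}' \subseteq \mathcal{U}$, every member of which is both Ulrich and stable. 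This proves the proposition, and combined with the $14$-dimensionality of $\textnormal{Spl}^{\circ}_X(2,3H,14)$ from Theorem \ref{simpsmooth} and van den Bergh's correspondence (Propositions \ref{vdbcor} and \ref{lem-irred-stable}), it yields Theorem \ref{main}.

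The only delicate point is confirming that Proposition \ref{simpstab} really does apply to a \emph{general} Clifford quartic, i.e. that "general nondegenerate ternary quartic $f$" forces $X_f$ into the locus of quartic surfaces with Picard number $\geq 3$ and finitely many smooth rational curves. This is not an obstacle here: Proposition \ref{picardlattice} computes the Picard lattice of \emph{every} Clifford quartic (not just a general one), giving Picard number $8$ unconditionally, and the count of $56$ rational curves is likewise intrinsic to the lattice-theoretic description of $X_f$. So the genericity of $f$ is used only insofar as it guarantees $X_f$ is smooth, which is built into the standing hypotheses of this section. No further argument is needed.
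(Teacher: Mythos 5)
Your argument is essentially the paper's own: Proposition \ref{stable} is obtained by combining the openness of the Ulrich condition inside $\mathcal{U}$ (Theorem \ref{allquart}, via Propositions \ref{quadriculrich}, \ref{quadriccb} and \ref{bigdim}, together with Proposition \ref{opencond}) with Proposition \ref{simpstab}, whose hypotheses are verified for $X_f$ by Proposition \ref{picardlattice} and the count of $56$ smooth rational curves. One caveat: your closing remark that the Picard-lattice computation and the $56$-curve count hold for \emph{every} smooth Clifford quartic, so that genericity of $f$ is needed only for smoothness, is an overstatement (e.g. the Fermat quartic $w^4=x_1^4+x_2^4+x_3^4$ is a smooth Clifford quartic of Picard number $20$); Proposition \ref{picardlattice} is stated under the standing assumption that $f$ is general, which is exactly the hypothesis of Proposition \ref{stable}, so this does not affect your proof but the justification should invoke generality rather than claim unconditional validity.
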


This subsection contains an alternate proof of Proposition \ref{stable} that yields slightly sharper information, namely that \textit{all} the Ulrich bundles on $X_f$ guaranteed by Theorem \ref{allquart} are stable.  We begin with a well-known lemma, whose proof is included for completeness.
\begin{lem}
\label{K3eff}
Let $D$ be a divisor class on a K3 surface $X$ such that $D^2 > 0$ and $D$ has positive intersection with some ample divisor $H$ on $X.$  Then $D$ is the class of an effective divisor on $X.$
\end{lem}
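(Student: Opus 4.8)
The plan is to use Riemann--Roch on the K3 surface $X$ together with Serre duality to force $D$ (or $-D$) to be effective, and then to eliminate the possibility that $-D$ is the effective one by testing against the ample class $H$. First I would recall that for any divisor class $D$ on a K3 surface, the Riemann--Roch theorem reads
\begin{equation}
\chi(\OO_X(D)) = 2 + \frac{1}{2}D^2,
\end{equation}
since $\omega_X \cong \OO_X$ and $\chi(\OO_X) = 2$. Under the hypothesis $D^2 > 0$ we get $\chi(\OO_X(D)) \geq 2 + 1 > 0$ (recall the intersection form is even, so $D^2 \geq 2$), and hence $h^0(\OO_X(D)) + h^2(\OO_X(D)) > 0$.

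Next I would apply Serre duality: $h^2(\OO_X(D)) = h^0(\omega_X \otimes \OO_X(-D)) = h^0(\OO_X(-D))$. So the inequality above becomes $h^0(\OO_X(D)) + h^0(\OO_X(-D)) > 0$, which means that at least one of $D$ and $-D$ is the class of an effective divisor. The remaining task is to rule out the case that only $-D$ is effective. If $-D$ were effective, write $-D = \sum a_i C_i$ with $a_i > 0$ and $C_i$ irreducible curves; then $H \cdot (-D) = \sum a_i (H \cdot C_i) > 0$ because $H$ is ample and each $C_i$ is an effective curve, so $H \cdot D < 0$, contradicting the hypothesis that $D \cdot H > 0$. (One should note that $-D$ cannot be both effective and zero, since $D^2 > 0$ forces $D \neq 0$.) Therefore $D$ itself must be effective, which is the claim.

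The argument is essentially routine; the only point requiring a moment's care is the observation that $D^2 > 0$ already gives $\chi > 0$ strictly (so that one of $h^0(\OO_X(\pm D))$ is nonzero), and the bookkeeping that the ``wrong'' alternative $-D$ effective is incompatible with $D\cdot H > 0$. There is no real obstacle here; the lemma is standard and the proof is short. The only subtlety worth flagging is that we do \emph{not} need $H$ to be the hyperplane class or $D^2$ to be large --- positivity of $D^2$ and of a single pairing $D \cdot H$ against an ample class suffice.
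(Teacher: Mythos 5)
Your proof is correct and is essentially the same as the paper's: Riemann--Roch plus Serre duality gives $h^0(\mathcal{O}_X(D))+h^0(\mathcal{O}_X(-D)) \geq 2 + \tfrac{D^2}{2} > 0$, and the effective alternative $-D$ is excluded by pairing with the ample class $H$. No gaps.
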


\begin{proof}
By the Riemann-Roch theorem and Serre Duality, we have that
\begin{equation}
h^0(\mathcal{O}_{X}(D))+h^0(\mathcal{O}_{X}(-D)) \geq 2 + \frac{D^2}{2}
\end{equation}
Since $D^2 > 0$ by hypothesis, it follows that exactly one of $D$ and $-D$ is an effective divisor class.  Since $-D$ is assumed to have negative intersection with an ample divisor on $X,$ it cannot be effective; this concludes the proof.
\end{proof}

\begin{prop}
\label{curvenef}
There does not exist an effective divisor $D$ on $X$ satisfying $D \cdot H=6$ and $D^2=4.$
\end{prop}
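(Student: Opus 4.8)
The plan is to prove the stronger statement that \emph{no divisor class} $D$ on $X_f$ satisfies $D\cdot H=6$ and $D^2=4$; this is in fact equivalent to the proposition, since by Lemma \ref{K3eff} any class with $D^2=4>0$ and $D\cdot H=6>0$ is automatically effective. So the entire content reduces to a lattice computation.

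First I would pin down the hyperplane class $H$ in the basis of Proposition \ref{picardlattice}. The linear projection $\pi\colon X_f\to\mathbb{P}^2$ is given by the base-point-free subsystem of $|H|$ spanned by the coordinate functions $x_1,x_2,x_3$ (base-point-free because their common zero $[1:0:0:0]$ does not lie on $X_f$), so $H=\pi^*\mathcal{O}_{\mathbb{P}^2}(1)=\pi_2^*\pi_1^*\mathcal{O}_{\mathbb{P}^2}(1)$. Since $\pi_1^*\mathcal{O}_{\mathbb{P}^2}(1)=\omega_S^{-1}=3e_0-e_1-\cdots-e_7$ and $\widetilde{e}_i=\pi_2^*e_i$, this gives
\[
H=3\widetilde{e}_0-\widetilde{e}_1-\cdots-\widetilde{e}_7.
\]
Writing a hypothetical class as $D=\sum_{i=0}^{7}a_i\widetilde{e}_i$ with $a_i\in\mathbb{Z}$ and using the intersection form $\mathrm{diag}(2,-2,\ldots,-2)$ of Proposition \ref{picardlattice}, the two conditions become, after dividing by $2$,
\[
a_0^2-(a_1^2+\cdots+a_7^2)=2
\qquad\text{and}\qquad
3a_0+(a_1+\cdots+a_7)=3.
\]

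The crux — and really the whole argument — is a parity observation. Reducing both identities modulo $2$ and using $n^2\equiv n\pmod 2$, the first forces $a_0+a_1+\cdots+a_7\equiv 0\pmod 2$ while the second forces $a_0+a_1+\cdots+a_7\equiv 1\pmod 2$; these contradict one another, so no such $D$ exists. Equivalently, $H$ is a characteristic vector of the lattice $\mathrm{Pic}(X_f)$ — all its coordinates in the $\widetilde{e}_i$-basis are odd — so $\tfrac12(D\cdot H)\equiv\tfrac12 D^2\pmod 2$ for every divisor class $D$, and $\tfrac12\cdot 6=3$ is odd while $\tfrac12\cdot 4=2$ is even. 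I do not anticipate any genuine difficulty beyond careful mod-$2$ bookkeeping; the only non-arithmetic input is the computation of the class of $H$, and even there it is only its parity (all coefficients odd), not the precise coefficients, that is used. As a payoff, this immediately gives the claim from the introduction that $X_f$ carries no Ulrich line bundle — such a bundle would be globally generated, hence effective, with $c_1\cdot H=6$ and $c_1^2=4$ by the Ulrich Hilbert polynomial — and, combined with Proposition \ref{cliffgiesstab} and Lemma \ref{cliffdestab}, it yields the promised sharper form of Proposition \ref{stable}: every rank-$2$ Ulrich bundle on $X_f$ with $c_1=3H$ is stable.
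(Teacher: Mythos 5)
Your proof is correct, but it takes a genuinely different---and in fact stronger and more elementary---route than the paper's. The paper only rules out \emph{effective} classes: it first argues that such a $D$ would have nonnegative intersection with each of the 56 conics on $X_f$ (using irreducibility of its components and the Hodge index theorem), combines the resulting linear inequalities with the two numerical conditions to get bounds such as $0\le b_i\le 3$, and then checks by a computer search that the system has no integer solutions. You instead observe that $H=\pi_2^{\ast}(-K_S)=3\widetilde e_0-\widetilde e_1-\cdots-\widetilde e_7$ is characteristic for the half-integral form, so that $\tfrac12(D\cdot H)\equiv\tfrac12 D^2\pmod 2$ for \emph{every} class $D\in\operatorname{Pic}(X_f)$; intrinsically this is just adjunction on $S$, since writing $D=\pi_2^{\ast}x$ gives $\tfrac12(D\cdot H)=-x\cdot K_S\equiv x^2=\tfrac12 D^2\pmod 2$, and the pair $(6,4)$ violates this parity. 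This buys several things: effectivity is never used (so Lemma \ref{K3eff}, the conic analysis, and the computer verification all drop out), and you obtain the stronger conclusion that no divisor class at all has these invariants, which also streamlines the ``no Ulrich line bundle'' remark and Proposition \ref{halfstab}. Both arguments rest on the same two inputs---Proposition \ref{picardlattice} (the $\widetilde e_i$ form a $\mathbb{Z}$-basis with form $\operatorname{diag}(2,-2,\ldots,-2)$) and the identification of $H$, which the paper uses implicitly when it writes $D\cdot H=6a-2\sum b_i$---and your parity observation in fact shows that the paper's two displayed equalities $3a-\sum b_i=3$ and $a^2-\sum b_i^2=2$ are already inconsistent modulo $2$, so the inequality bookkeeping is logically unnecessary.
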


\begin{proof}
First we will show that any effective divisor class $D$ with $D \cdot H=6$ and $D^2=4$ must have nonnegative intersection with any of the 56 conics on $X,$ and afterwards we will show that this yields a contradiction.

We assume that $D$ is linearly equivalent to a reducible curve of the form $\tilde{e}+D'$ for a conic $\tilde{e}$ on $X$ and a curve $D'$ on $X,$ since the statement of the lemma is immediate otherwise.  First, note that $D' $ is irreducible.  Indeed, we have that $D' \cdot H =4,$ and if $D'$ is reducible then it must be the sum of two conics $\tilde{e}'$ and $\tilde{e}''$ since $X$ does not contain any lines.  Simplifying the equation $D^2=(\tilde{e}+\tilde{e}'+\tilde{e}'')^{2}=4$ yields
\begin{equation}
\tilde{e} \cdot \tilde{e}' + \tilde{e} \cdot \tilde{e}'' + \tilde{e}' \cdot \tilde{e}'' = 5
\end{equation}
which is impossible since the intersection form on $X$ is even.  Given that $D'$ and $\tilde{e}$ are distinct irreducible curves (their intersections with $H$ differ), it follows that $D' \cdot \tilde{e} \geq 0.$  Moreover, if $\tilde{e}'''$ is any of the 55 conics on $X$ which is not equal to $\tilde{e},$ then $D \cdot \tilde{e}''' = (\tilde{e}+D') \cdot \tilde{e}''' \geq 0.$

It remains to check that $D \cdot \tilde{e} \geq 0,$ or equivalently that $D' \cdot \tilde{e} \geq 2.$  All that is required is to rule out the possibility that $D' \cdot \tilde{e}=0.$

Note that $D'$ is not linearly equivalent to $H;$ this is because the arithmetic genus of $\tilde{e}+D'$ is 3, whereas the arithmetic genus of $\tilde{e}+H$ is 4.  The Hodge Index Theorem then implies that $(D'-H)^2<0,$ so that $(D')^2 \leq 2.$  If $D' \cdot \tilde{e}=0,$ then $D^2=(D')^2+\tilde{e}^2 \leq 0,$ which is absurd.

Write $D=a\widetilde{e}_0-\sum b_i \widetilde{e}_i$.  By hypothesis, we have
\begin{align*}
 D.H &= 6a-\sum 2b_i &= 6 \\
 D^2 &= 2a^2-2\sum b_i^2 &= 4.
\end{align*}

We have just shown that $D$ has nonnegative intersection with any conic on $X,$ so we also have the following inequalities
\begin{align*}
 D.(\widetilde{e}_i) &= 2b_i &\geq 0 \\
 D.(\widetilde{e}_0-\widetilde{e}_i-\widetilde{e}_j) &= 2a-2b_i-2b_j &\geq 0 \\
 D.(2\widetilde{e}_0-e_{i_1}- \cdots -e_{i_5}) &= 4a-2\sum_{t=1}^{5}b_{i_t} &\geq 0 \\
 D.(3\widetilde{e}_0-\sum_{i \neq j}\widetilde{e}_i-2\widetilde{e}_j) &= 6a-2\sum_{i \neq j}b_i-4b_j &\geq 0
\end{align*}
The first formula gives $b_i \geq 0$. Combining the last formula with $3a-\sum b_i = 3$ gives $b_i \leq 3$. Hence it is possible to check solutions using a computer program.\footnote{Available from the first author upon request.}  Doing this shows that our system of inequalities is inconsistent, e.g. that $D$ cannot exist.
\end{proof}

\begin{prop}
\label{halfstab}
Any semistable bundle of rank 2 on $X_f$ with $c_1=3H$ and $c_2=14$ is stable.  In particular, any Ulrich bundle of rank 2 on $X_f$ with $c_1=3H$ and $c_2=14$ is stable.
\end{prop}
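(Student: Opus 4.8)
The plan is to argue by contradiction: assume $\mathcal{E}$ is a rank-$2$ semistable bundle on $X_f$ with $c_1(\mathcal{E})=3H$ and $c_2(\mathcal{E})=14$ which is \emph{not} stable, and use it to manufacture an effective divisor of the type forbidden by Proposition \ref{curvenef}.

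First I would extract a destabilizing line subbundle. Since $\mathcal{E}$ is semistable but not stable, there exists a subsheaf $\mathcal{F} \subseteq \mathcal{E}$ with $0 < \textnormal{rk}(\mathcal{F}) < 2$ and $p(\mathcal{F}) = p(\mathcal{E})$; in particular $\mathcal{F}$ has rank $1$. Replacing $\mathcal{F}$ by its saturation in $\mathcal{E}$ only increases $p(\mathcal{F})$, so by semistability the saturation still satisfies $p(\mathcal{F}) = p(\mathcal{E})$, and we may assume $\mathcal{E}/\mathcal{F}$ is torsion-free. Then $\mathcal{F}$ is a reflexive rank-$1$ sheaf on the smooth surface $X_f$, hence a line bundle $\mathcal{O}_{X_f}(D)$ for some divisor class $D$.

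Next comes a Riemann--Roch computation on the K3 surface $X_f$. On the one hand $p(\mathcal{E})(t) = \tfrac12 \chi(\mathcal{E}(t)) = 2t^2 + 6t + 4$, using $c_1(\mathcal{E}) = 3H$, $c_2(\mathcal{E}) = 14$, $H^2 = 4$, and $\chi(\mathcal{O}_{X_f}) = 2$. On the other hand $p(\mathcal{F})(t) = \chi(\mathcal{O}_{X_f}(D+tH)) = 2 + \tfrac12(D+tH)^2 = 2t^2 + (D\cdot H)\,t + \big(2 + \tfrac12 D^2\big)$. Matching the coefficient of $t$ and the constant term in the identity $p(\mathcal{F}) = p(\mathcal{E})$ forces $D\cdot H = 6$ and $D^2 = 4$. (It is worth noting that this step genuinely needs the \emph{reduced Hilbert polynomial}, not merely slopes: the linear coefficient pins down $D\cdot H$ and the constant coefficient then pins down $D^2$.)

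Finally, since $H$ is ample we have $D\cdot H = 6 > 0$ and $D^2 = 4 > 0$, so Lemma \ref{K3eff} shows that $D$ is the class of an effective divisor on $X_f$. This contradicts Proposition \ref{curvenef}, which asserts that no effective divisor with $D\cdot H = 6$ and $D^2 = 4$ exists on $X_f$. Hence $\mathcal{E}$ must be stable. The final assertion is then immediate: a rank-$2$ Ulrich bundle on $X_f$ with $c_1 = 3H$ and $c_2 = 14$ is semistable by Proposition \ref{cliffgiesstab}, hence stable by what we have just shown. The only real work in this argument has been offloaded to Proposition \ref{curvenef} (the polyhedral effective-cone computation) and Lemma \ref{K3eff}; within this proof itself nothing is hard, the one point requiring care being the Hilbert-polynomial bookkeeping described above.
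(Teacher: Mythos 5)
Your proof is correct and follows the same route as the paper: a destabilizing line subbundle (obtained after saturation) must satisfy $D\cdot H=6$ and $D^2=4$ by the reduced-Hilbert-polynomial computation, and Lemma \ref{K3eff} together with Proposition \ref{curvenef} rules such a class out. The paper states the Hilbert-polynomial step without details, so your write-up simply fills in the bookkeeping the paper leaves implicit.
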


\begin{proof}
Let $\mathcal{E}$ be a bundle satisfying the hypotheses.  A Hilbert polynomial calculation shows that any line bundle $\mathcal{M}$ on $X_f$ which destabilizes $\mathcal{E}$ must satisfy $c_1(\mathcal{M}) \cdot H = 6$ and $c_1(\mathcal{M})^2=4.$  However, Lemma \ref{K3eff} and Proposition \ref{curvenef} imply that such a line bundle cannot exist on $X_f.$   
\end{proof}

\begin{cor}
Each irreducible component of the moduli space of rank-2 stable bundles on $X_f$ is a smooth projective variety of dimension 14. \hfill \qedsymbol  
\end{cor}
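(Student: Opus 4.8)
The plan is to deduce smoothness and the dimension $14$ directly from Mukai's theorem, and to obtain projectivity from the Gieseker--Maruyama construction together with Proposition \ref{halfstab}; the one point that requires real care is properness. Write $M$ for the moduli space of stable rank-$2$ bundles on $X_f$ with $c_1 = 3H$ and $c_2 = 14$. By Proposition \ref{stable} (equivalently Theorem \ref{main}) a stable Ulrich bundle of rank $2$ on $X_f$ has precisely these invariants, so $M$ is nonempty. Every stable bundle is simple, so $M$ is an open subscheme of $\textnormal{Spl}_{X_f}(2,3H,14)$; by Theorem \ref{simpsmooth} the latter is smooth, and the Riemann--Roch computation already carried out in the proof of Proposition \ref{simplemoduli} (using $c_1^2 = 9H^2 = 36$, $c_2 = 14$, and the vanishing of $H^0$ and $H^2$ of the traceless endomorphism sheaf of a simple bundle on a $K3$ surface) gives dimension $14$ at every point. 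Hence $M$ is smooth of pure dimension $14$, so each of its irreducible components is smooth of dimension $14$.

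For projectivity it suffices to prove that $M$ is proper. By Gieseker--Maruyama, the moduli space $\overline{M}$ of Gieseker-semistable rank-$2$ torsion-free sheaves on $X_f$ with $c_1 = 3H$, $c_2 = 14$ is projective, and $M$ is an open subscheme of $\overline{M}$ --- in particular quasi-projective --- since local freeness and stability are both open in flat families. It therefore suffices to show that $M$ is also closed in $\overline{M}$; via the valuative criterion and Langton's theorem this reduces to the following: if $\{\mathcal{E}_t\}_{t\neq 0}$ is a family of stable bundles over a punctured smooth curve, then a semistable limit $\mathcal{E}_0$ is again a stable bundle. The limit $\mathcal{E}_0$ is automatically semistable, and by Proposition \ref{halfstab} it is stable as soon as it is locally free, so the whole question is whether $\mathcal{E}_0$ can fail to be locally free.

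This is the main obstacle. The natural approach is to pass to the reflexive hull $\mathcal{G} := \mathcal{E}_0^{\vee\vee}$ --- which is locally free on the surface $X_f$ and sits in $0 \to \mathcal{E}_0 \to \mathcal{G} \to T \to 0$ with $T$ of finite length $\ell \geq 1$, so that $c_1(\mathcal{G}) = 3H$ but $c_2(\mathcal{G}) = 14-\ell$ --- and then to force the existence of an effective destabilizing divisor class $D$ on $X_f$ with $D \cdot H = 6$, using Proposition \ref{picardlattice} and Lemma \ref{K3eff}, before contradicting this with Proposition \ref{curvenef}. The difficulty is that a Hilbert-polynomial bookkeeping against $p(\mathcal{E}_0)$ produces $D^2 = 4 + 2\ell$, while Proposition \ref{curvenef} only rules out $D^2 = 4$, i.e. the case $\ell = 0$ which corresponds to a locally free limit and is not at issue; the Hodge Index Theorem leaves $D^2 \in \{6,8\}$ open for $\ell \geq 1$, and such effective classes do occur on $X_f$ (for instance $H + \widetilde{e}_i$), so one needs a finer analysis of how a class of this type can sit inside a reflexive hull of the prescribed shape. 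Once non-locally-free limits have been excluded, $\mathcal{E}_0$ is a stable bundle, $M$ is closed in $\overline{M}$, hence projective, and every irreducible component of $M$ is a smooth projective variety of dimension $14$.
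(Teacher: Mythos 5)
Your treatment of nonemptiness, smoothness and the dimension count is exactly the intended one (Proposition \ref{stable} plus Theorem \ref{simpsmooth} and the Mukai/Riemann--Roch computation giving $\mathrm{ext}^1(\mathcal{E},\mathcal{E})=14$). The problem is the projectivity step, and you have identified the gap yourself: your argument reduces properness to showing that a Langton limit $\mathcal{E}_0$ of stable bundles cannot fail to be locally free, and your attempted exclusion via a destabilizing class with $D\cdot H=6$, $D^2=4+2\ell$ is not covered by Proposition \ref{curvenef}, which only kills $D^2=4$. This is not a presentational issue: the tools of the paper cannot close it, because excluding non-locally-free stable sheaves with $c_2=14$ would amount to excluding $\mu$-semistable rank-2 sheaves with $c_1=3H$ and $c_2\leq 13$ (the double dual $\mathcal{E}_0^{\vee\vee}$ has $c_2=14-\ell$), and the Bogomolov inequality only forces $c_2\geq 9$; indeed one expects such sheaves to exist, so the locus of locally free stable sheaves need not be closed in $\overline{M}$ at all. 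In other words, the route ``prove the bundle locus is closed'' is the wrong one, not merely an unfinished one.

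The paper's corollary is meant to be read at the level of the Gieseker--Maruyama moduli space of semistable torsion-free sheaves with $c_1=3H$, $c_2=14$, and then no valuative criterion is needed. Observe that the proof of Proposition \ref{halfstab} never uses local freeness of $\mathcal{E}$: a destabilizing rank-1 subsheaf may be replaced by its saturation, which is a line bundle $\mathcal{O}_{X_f}(D)$ with $D\cdot H=6$ and $D^2\geq 4$, and Lemma \ref{K3eff} together with Proposition \ref{curvenef} (and the evenness of the intersection form from Proposition \ref{picardlattice}) rules this out. Hence every semistable sheaf with these invariants is stable, so the projective Gieseker--Maruyama space has no strictly semistable (S-equivalence) identifications, is a fine-enough moduli of stable objects, and is smooth of dimension 14 by Mukai's theorem applied to stable (hence simple) sheaves. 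Projectivity is then automatic from the Gieseker--Maruyama construction, with no need to control non-locally-free limits; if one insists on the literal reading ``stable bundles,'' the correct conclusion is that the bundle locus is a smooth open subset of this 14-dimensional smooth projective moduli space, which is what the rest of the paper actually uses.
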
 

\subsection{A simple weakly Ulrich bundle on $X_f$ which is not Ulrich}
\label{badbundles}
In this final subsection, we construct a family of weakly Ulrich bundles on $X_f$ with $c_1=3H$ and $c_2=14$ whose members are simple and globally generated, but not Ulrich.

Let $E$ be a smooth cubic curve in $\mathbb{P}^{2}$ which intersects the branch divisor of $\pi_{1}$ in 12 distinct points.  Then $D:=\pi_{1}^{-1}(E)$ is a smooth irreducible member of the linear system $|\mathcal{O}_{S}(3)|.$  Since ${\pi}_{1}|_{D}$ is a double cover of an elliptic curve which is ramified at 12 points, it follows easily from Riemann-Hurwitz that $D$ is of genus 7.  In what follows, $C$ will always denote ${\pi_{2}}^{-1}(D).$  Note that $C$ is the member of $|\mathcal{O}_{X}(3)|$ obtained by intersecting $X$ with the cone over $E \subseteq \mathbb{P}^2.$

\begin{lem}
\label{branchbundle}
$(\pi_{2}|_{C})_{\ast}\mathcal{O}_{C} \cong \mathcal{O}_{D} \oplus \mathcal{O}_{D}(-1).$
\end{lem}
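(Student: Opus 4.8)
The plan is to analyze the double cover $\pi_2|_C : C \to D$ directly. Since $\pi_2 : X_f \to S$ is the double cover branched over $B = \pi_2^{-1}(Q)$, the restriction $\pi_2|_C$ is a double cover of $D$ branched over $B \cap D$, which is the preimage under $\pi_1|_D$ of the intersection of $E$ with the branch quartic $Q$; by our choice of $E$ this consists of $12$ distinct points of $D$. For any double cover $p : C \to D$ of smooth curves, $p_\ast \mathcal{O}_C$ splits as $\mathcal{O}_D \oplus \mathcal{N}^{-1}$, where $\mathcal{N}$ is a line bundle on $D$ with $\mathcal{N}^{\otimes 2} \cong \mathcal{O}_D(R)$ and $R$ is the branch divisor of $p$. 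Thus the problem reduces to identifying $\mathcal{N}$ as $\mathcal{O}_D(1)$, i.e.\ to computing the square root of $\mathcal{O}_D(B\cap D) = \mathcal{O}_D(12\ \text{points})$ cut out by the cover.

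First I would recall how $\pi_2$ is built: writing $S$ as the double cover $\pi_1 : S \to \mathbb{P}^2$ branched over $Q$, the surface $X_f = \{w^4 = f\}$ is obtained from $S = \{v^2 = f\}$ by the further double cover $w^2 = v$, so the branch divisor $B$ of $\pi_2$ is exactly the ramification divisor of $\pi_1$ pulled back to $S$, namely $B \in |\pi_2^{\ast}(-\tfrac12 K_S)| = |\pi_2^{\ast}\pi_1^{\ast}\mathcal{O}_{\mathbb{P}^2}(1)|$. Restricting to $C$, the divisor class of $B\cap D$ on $C$ is therefore $(\pi_2|_C)^{\ast}(\pi_1|_D)^{\ast}\mathcal{O}_{\mathbb{P}^2}(1)\cdot 2$? — more carefully, $B|_C$ has class $2\,(\pi_2|_C)^{\ast}\big((\pi_1|_D)^{\ast}\mathcal{O}_{\mathbb{P}^2}(1)\big)$ is not right either; rather $\mathcal{O}_{X_f}(B)|_C = \mathcal{O}_C(1)^{\otimes ?}$. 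The clean statement is: the line bundle $\mathcal{N}$ on $D$ satisfying $\mathcal{N}^{\otimes 2} \cong \mathcal{O}_D(B\cap D)$ is precisely $\mathcal{O}_D(1) = (\pi_1|_D)^{\ast}\mathcal{O}_{\mathbb{P}^2}(1)$, because $B = \pi_2^{\ast}H_S$ with $H_S$ the pullback of a line, so $B\cap D$ lies in $|2H_S|_D|$, whence $(\pi_2|_C)^{\ast}(H_S|_D)$ is a square root; and since $\mathcal{O}_C(1) = (\pi_2|_C)^{\ast}\mathcal{O}_D(1)$ by definition of how $C$ sits in $X_f$, we get $\mathcal{N} = \mathcal{O}_D(1)$. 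Hence $(\pi_2|_C)_\ast\mathcal{O}_C \cong \mathcal{O}_D \oplus \mathcal{O}_D(-1)$, as claimed.

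The step I expect to require the most care is pinning down the branch class of $\pi_2|_C$ as an honest divisor class on $D$ — i.e.\ verifying that the ``half'' of $\mathcal{O}_D(B\cap D)$ selected by the cyclic cover structure is $\mathcal{O}_D(1)$ and not some other square root differing by a $2$-torsion point of $\mathrm{Pic}(D)$ (there are $2^{14}$ of them, since $D$ has genus $7$). The way to nail this down is not to work on $D$ alone but to use that the cover extends to all of $X_f \to S$: the eigensheaf decomposition $(\pi_2)_\ast\mathcal{O}_{X_f} = \mathcal{O}_S \oplus \mathcal{O}_S(\tfrac12 K_S) = \mathcal{O}_S \oplus \mathcal{O}_S(-H_S)$ restricts along $D \hookrightarrow S$ and commutes with pushforward, giving $(\pi_2|_C)_\ast\mathcal{O}_C \cong \mathcal{O}_D \oplus \mathcal{O}_S(-H_S)|_D = \mathcal{O}_D \oplus \mathcal{O}_D(-1)$ on the nose, since $D$ is a smooth member of $|\mathcal{O}_S(3)|$ and base change for the flat morphism $\pi_2$ applies. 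This reduces the lemma to the already-established facts that $\omega_S \cong \pi_1^{\ast}\mathcal{O}_{\mathbb{P}^2}(-1)$ (so $\tfrac12 K_S = -H_S$) and $\mathcal{O}_D(1) = H_S|_D$ by construction, and no $2$-torsion ambiguity arises.
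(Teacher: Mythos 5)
Your argument is correct, but it takes a genuinely different route from the paper's. You work at the level of the surfaces: $\pi_2$ is the cyclic double cover given by $w^{2}=v$, where $v$ restricts on $S$ to a section of $\mathcal{O}_S(2)=\pi_1^{\ast}\mathcal{O}_{\mathbb{P}^2}(2)$ cutting out the branch curve, so the eigensheaf decomposition is $(\pi_2)_{\ast}\mathcal{O}_{X_f}\cong\mathcal{O}_S\oplus\mathcal{O}_S(-1)$; since $\pi_2$ is finite (hence affine, and indeed flat) and $C=\pi_2^{-1}(D)$ scheme-theoretically, pushforward commutes with restriction along $D\hookrightarrow S$, which gives the lemma with no square-root ambiguity ever arising. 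The paper instead argues entirely on the curves: the trace map splits $(\pi_{2}|_{C})_{\ast}\mathcal{O}_{C}\cong\mathcal{O}_{D}\oplus\mathcal{N}$, the branch divisor of $\pi_2|_C$ is the ramification divisor of $\pi_1|_D$, a member of $|\omega_D|=|\mathcal{O}_D(2)|$, so $\mathcal{N}^{\otimes 2}\cong\mathcal{O}_D(-2)$, and the $2$-torsion ambiguity you rightly flag is then killed by a cohomology count: the projection formula gives $h^{0}(\mathcal{O}_{C}(1))=h^{0}(\mathcal{O}_{D}(1))+h^{0}(\mathcal{N}(1))$, and $h^{0}(\mathcal{O}_{C}(1))=4$, $h^{0}(\mathcal{O}_{D}(1))=3$ force the degree-$0$ bundle $\mathcal{N}(1)$ to have a section, hence be trivial. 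Your global approach buys a cleaner identification (no cohomology computation, and it pins down the summand on all of $S$ at once); the paper's approach stays on $C$ and $D$ and needs no explicit identification of $\pi_2$ as a cyclic cover, only the branch data of the restricted covers.

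One slip to correct before this could be spliced in: since $\omega_S\cong\pi_1^{\ast}\mathcal{O}_{\mathbb{P}^2}(-1)$ we have $K_S=-H_S$, so $\tfrac12 K_S$ is not an integral class and is not $-H_S$; likewise the assertion that the branch divisor lies in $|\pi_2^{\ast}(-\tfrac12 K_S)|$ should be discarded. The correct bookkeeping is that the branch curve of $\pi_2$ on $S$ lies in $|-2K_S|=|2H_S|$, the cover is defined by the square root $\mathcal{O}_S(H_S)$ of its class furnished by $w^{2}=v$, and the anti-invariant summand is $\mathcal{O}_S(-H_S)=\mathcal{O}_S(K_S)$. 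This is purely notational; the decomposition you actually use in the final step is the right one.
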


\begin{proof}
Since the trace map $\textnormal{tr}: (\pi_{2}|_{C})_{\ast}\mathcal{O}_{C} \rightarrow \mathcal{O}_{D}$ induces the splitting $(\pi_{2}|_{C})_{\ast}\mathcal{O}_{C} \cong \mathcal{O}_{D} \oplus \mathcal{N},$ where $\mathcal{N}:=\det((\pi_{2}|_{C})_{\ast}\mathcal{O}_{C}),$ it suffices to show that $\mathcal{N} \cong \mathcal{O}_{D}(-1).$

The branch divisor $B$ of $\pi_{2}|_{C}$ is equal to the ramification divisor of $\pi_{1}|_{D} : D \rightarrow E,$ which is in turn a member of the canonical linear system $|\omega_{D}|=|\mathcal{O}_{D}(2)|.$  We then have that
\begin{equation}
\mathcal{N}^{\otimes 2} \cong \mathcal{O}_{D}(-B) \cong \mathcal{O}_{D}(-2)
\end{equation}
Given that the degree of $B$ is 12, the degree of $\mathcal{N}$ is -6, so that $\mathcal{N}(1)$ has degree 0.  We have from the projection formula that
\begin{equation}
H^{0}(\mathcal{O}_{C}(1)) \cong H^{0}(\mathcal{O}_{D}(1) \otimes (\pi_{2})_{\ast}\mathcal{O}_{C}) \cong H^{0}(\mathcal{O}_{D}(1)) \oplus H^{0}(\mathcal{N}(1)).
\end{equation}
Taking cohomology of both the exact sequence
\begin{equation}
\label{curverest}
0 \rightarrow \mathcal{O}_{X}(t-3) \rightarrow \mathcal{O}_{X}(t) \rightarrow \mathcal{O}_{C}(t) \rightarrow 0
\end{equation} 
and its analogue for $D \subseteq S$ when $t=1,$ we see that $h^{0}(\mathcal{O}_{C}(1))=4$ and $h^{0}(\mathcal{O}_{D}(1))=3,$  so $h^{0}(\mathcal{N}(1))=1.$  In particular, $\mathcal{N}(1)$ is trivial.
\end{proof}

\begin{prop}
\label{genus7}
There exists a line bundle $\mathcal{L}'$ on $D$ of degree 7 which satisfies the following properties:
\begin{itemize}
\item[(i)]{$\mathcal{L}'$ and $\mathcal{L}'^{-1}(3)$ are globally generated.}
\item[(ii)]{$h^{0}(\mathcal{L}')=2.$}
\item[(iii)]{$h^{0}(\mathcal{L}'(-1))=0.$}
\item[(iv)]{$h^1(\mathcal{L}'^{\otimes 2}(-1))=0.$}
\end{itemize}
\end{prop}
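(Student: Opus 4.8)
The plan is to produce $\mathcal{L}'$ as $\omega_D(-\Gamma)=\mathcal{O}_D(2)(-\Gamma)$ for a general effective divisor $\Gamma$ of degree $5$ on $D$. Recall that $D$ has genus $7$, that $\omega_D\cong\mathcal{O}_D(2)$, and that $\mathcal{O}_D(1)$ has degree $6$ and is basepoint-free, being pulled back from $\mathcal{O}_{\mathbb{P}^2}(1)$ via $\pi_1$; thus $\mathcal{L}'$ has degree $7$, and by Serre duality the assignment $\Gamma\mapsto\omega_D(-\Gamma)$ identifies $W^1_7(D)$ with the (irreducible, $5$-dimensional) image $W_5(D)$ of $D^{(5)}$ in $\mathrm{Pic}^5(D)$, so that "$\mathcal{L}'$ general in $W^1_7(D)$" is the same as "$\Gamma$ general" and our choice is in fact the most general possible. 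A routine Riemann--Roch computation rewrites the four conditions in terms of $\mathcal{L}'$, $\mathcal{L}'^{-1}(3)=\mathcal{O}_D(1)(\Gamma)$ (degree $11$), $\mathcal{L}'(-1)=\mathcal{O}_D(1)(-\Gamma)$ (degree $1$) and $\mathcal{L}'^{\otimes 2}(-1)=\mathcal{O}_D(3)(-2\Gamma)$ (degree $8$): condition (ii), together with the fact that $\mathcal{L}'(-1)$ has degree $1$, reduces to $h^0(\mathcal{O}_D(\Gamma))=1$; condition (iii) becomes $h^0(\mathcal{O}_D(1)(-\Gamma))=0$; condition (iv) is equivalent by Serre duality to $h^0(\mathcal{O}_D(2\Gamma-1))=0$, where $\mathcal{O}_D(2\Gamma-1)$ has degree $4$; and condition (i) asks for basepoint-freeness of $\mathcal{L}'$ and of $\mathcal{O}_D(1)(\Gamma)$. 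Each of these holds on a Zariski-open subset of $D^{(5)}$, so it suffices to verify each for the general $\Gamma$.

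The three vanishing statements are elementary dimension counts. One has $h^0(\mathcal{O}_D(\Gamma))=1$ for general $\Gamma$, since otherwise the general fiber of the Abel map $D^{(5)}\to\mathrm{Pic}^5(D)$ would be positive-dimensional, forcing $\dim W_5(D)\le 4$ and contradicting the standard equality $\dim W_5(D)=5$. For $h^0(\mathcal{O}_D(1)(-\Gamma))=0$: a $\Gamma$ violating this is a subdivisor of a hyperplane section of $D$, and such $\Gamma$ form a locus of dimension at most $2$ in $D^{(5)}$, since the hyperplane sections form a $2$-dimensional family and each has finitely many degree-$5$ subdivisors. For $h^0(\mathcal{O}_D(2\Gamma-1))=0$: I would note that $\Gamma\mapsto\mathcal{O}_D(2\Gamma)$ is the composition of the Abel map $D^{(5)}\to\mathrm{Pic}^5(D)$ with the finite morphism $\mathrm{Pic}^5(D)\to\mathrm{Pic}^{10}(D)$, $M\mapsto M^{\otimes 2}$, so its image is a $5$-dimensional subvariety of $\mathrm{Pic}^{10}(D)$; this cannot lie in the $4$-dimensional translate $\{M\otimes\mathcal{O}_D(1):M\in W_4(D)\}$, so the general $\Gamma$ has $\mathcal{O}_D(2\Gamma-1)$ non-effective.

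It remains to treat the basepoint-freeness in condition (i). For $\mathcal{O}_D(1)(\Gamma)$ (which has $h^0=5$, thanks to the vanishing $h^0(\mathcal{O}_D(1)(-\Gamma))=0$), a base point $p$ forces $\mathcal{O}_D(1)(p-\Gamma)$ to be effective of degree $2$, so that $\Gamma$ is a subdivisor of a divisor in $|\mathcal{O}_D(1)(p)|$ for some $p$; since $\mathcal{O}_D(1)$ is basepoint-free and $h^0(\mathcal{O}_D(1))=3$, this exhibits an at most $3$-dimensional family of such $\Gamma$, which the general $\Gamma$ avoids. For $\mathcal{L}'=\omega_D(-\Gamma)$ itself, a base point $p$ is equivalent to $\mathcal{O}_D(\Gamma+p)\in W^1_6(D)$, so the $\Gamma$ to be avoided form a locus of dimension at most $\dim W^1_6(D)+1$; thus it suffices to show $\dim W^1_6(D)\le 3$. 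This is the one genuinely curve-theoretic input, and here I would use the bielliptic double cover $\phi_E\colon D\to E$ onto the plane cubic $E$: by the Castelnuovo--Severi inequality $D$ carries no basepoint-free $g^1_5$, and the loci $W^1_d(D)$ for $d\le 5$ are swept out (up to adding points) by the pencils $\phi_E^{\ast}\mathrm{Pic}^2(E)$, while every basepoint-free $g^1_6$ is either pulled back from a $g^1_3$ on $E$ or belongs to a finite set; each of these contributes a subvariety of $\mathrm{Pic}^6(D)$ of dimension at most $3$.

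I expect the estimate on $\dim W^1_6(D)$ to be the only real obstacle; everything else is elementary dimension-counting in symmetric products and the Jacobian. Once that bound is in hand, intersecting the finitely many dense open subsets of $D^{(5)}$ on which conditions (i)--(iv) respectively hold yields a dense open set of divisors $\Gamma$, any of which gives the desired line bundle $\mathcal{L}'$.
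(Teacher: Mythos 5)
Most of your argument is sound and in fact runs parallel to the paper's proof: your treatment of (ii), (iii), (iv) and of the basepoint-freeness of $\mathcal{L}'^{-1}(3)$ amounts to the same dimension counts in $W_5(D)\cong W^1_7(D)$, $W_4(D)$, and the subdivisor loci that the paper carries out. The genuine gap is in the one step you yourself flag as the crux, the bound $\dim W^1_6(D)\le 3$. Your proposed justification rests on the assertion that ``every basepoint-free $g^1_6$ is either pulled back from a $g^1_3$ on $E$ or belongs to a finite set,'' and this is both unproven and structurally untenable. Since $\rho(7,1,6)=3$, the Brill--Noether existence theorem (Theorem 1.1 in Chapter V of \cite{ACGH}, quoted in the paper) forces every irreducible component of $W^1_6(D)$ to have dimension at least $3$; so there is no room for a nonempty ``finite set'': if even one complete basepoint-free $g^1_6$ not composed with the bielliptic map exists, then, basepoint-freeness being an open condition and the pullback locus being only $1$-dimensional (with $h^0=3$, so consisting of $g^2_6$'s), it lies on a component of dimension at least $3$ whose general member is again basepoint-free and not pulled back --- and your argument supplies no upper bound whatsoever for such a component. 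Note also that Castelnuovo--Severi does not exclude degree-$6$ maps independent of $\pi_2|_C$'s analogue $D\to E$ (the inequality is an equality for $d_2=6$), and smooth genus-$7$ curves of class $6F_1+2F_2$ on $E\times\mathbb{P}^1$ show that bielliptic genus-$7$ curves can indeed carry such pencils; bielliptic curves are precisely among the exceptions to the refined (Mumford) bound recorded as Theorem \ref{martens} in the paper.

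The good news is that the bound you need is true and has a one-line proof, which is exactly the paper's route: Castelnuovo--Severi shows $D$ is not hyperelliptic (as you observe), and Martens' theorem for non-hyperelliptic curves gives $\dim W^1_6(D)\le 6-2\cdot 1-1=3$. Substituting this for your structural claim repairs the argument. One further small point: your bound $\dim W^1_6(D)+1$ for the locus of bad $\Gamma$ in $D^{(5)}$ implicitly assumes the fibers of the Abel map over $W^1_6(D)$ are pencils; over $W^2_6(D)$ they are larger, so you should also note $\dim W^2_6(D)\le 1$ (Martens again), which keeps that stratum's contribution at most $3$. With these fixes your divisor-theoretic version of the proof is correct and essentially equivalent to the paper's.
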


\begin{proof}
We will show that the general member of the Brill-Noether locus $W^{1}_{7}(D)$ satisfies conditions (i) through (iv).  Since $W^1_7(D) \cong W_5(D)$ by Riemann-Roch and Serre duality, we have that $W^{1}_{7}(D)$ is 5-dimensional, so it suffices to check that the locus of elements of $W^{1}_{7}(D)$ which violate one of these conditions is at most 4-dimensional.

Since $D$ is bielliptic, the Castelnuovo-Severi inequality (e.g. Exercise C-1 in Section VIII of \cite{ACGH}) implies that $D$ cannot be hyperelliptic.  Consequently Martens' Theorem implies that the dimension of $W^{1}_{6}(D)$ is at most 3.  In particular, the image of the addition map $\sigma : D \times W^{1}_{6}(D) \rightarrow W^{1}_{7}(D)$ defined by $\sigma(p,\mathcal{L}')=\mathcal{L}'(p),$ which parametrizes the elements of $W^{1}_{7}(D)$ that fail to be globally generated, has dimension most 4.

Turning to $\mathcal{L}'^{-1}(3),$ we see that since the latter has degree 11 and that its Serre dual $\omega_{D} \otimes \mathcal{L}'(-3) \cong \mathcal{L}'(-1)$ has degree 1, so that $H^1(\mathcal{L}'^{-1}(3)) \neq 0$ only for $\mathcal{L}'$ in a 1-dimensional locus.  Consequently $\mathcal{L}'^{-1}(3)$ is nonspecial for general $\mathcal{L}' \in W^{1}_{7}(D);$ in particular, $h^{0}(\mathcal{L}'^{-1}(3))=5$ for all such $\mathcal{L}'.$  The locus in $W^1_{7}(D)$ for which $\mathcal{L}'^{-1}(3)$ fails to be globally generated is isomorphic to the image of the map ${\sigma}' : D \times W^{4}_{10}(D) \rightarrow W^4_{11}(D).$  Given that $D$ is nonhyperelliptic, the dimension of $W^{4}_{10}(D)$ is equal to 2, so the image of ${\sigma}'$ has dimension at most 3.  We may conclude that $\mathcal{L}'^{-1}(3)$ is globally generated for general $\mathcal{L}' \in W^1_{7}(D).$

The Brill-Noether locus $W^{2}_{7}(D)$ parametrizes the elements of $W^{1}_{7}(D)$ which violate (ii).  Another application of Martens' Theorem shows that it has dimension at most 2.

Since $\mathcal{O}_{D}(1)$ is of degree 6, we have that $\mathcal{L}'(-1)$ is of degree 1 for all $\mathcal{L}' \in \textnormal{Pic}^{7}(D).$  Given that line bundles of degree 1 with a global section are parametrized by a copy of the curve $D$ according to Abel's Theorem, we have that $h^{0}(\mathcal{L}'(-1))=0$ for general $\mathcal{L}' \in W^{1}_{7}(D),$ so condition (iii) is proved.

Finally, we come to condition (iv).  Since $\mathcal{L}'^{\otimes 2}(-1)$ has degree 8, its Serre dual $\omega_{D} \otimes \mathcal{L}'^{\otimes -2}(1)$ is of degree 4, so by Serre duality the nonvanishing of $H^{1}(\mathcal{L}'^{\otimes 2}(-1))$ is equivalent to the degree-4 line bundle $\omega_{D} \otimes \mathcal{L}'^{\otimes -2}(1)$ having a nonzero global section.  Since $W_{4}(D)$ is 4-dimensional, we have that the general member of $W^{1}_{7}(D)$ satisifies condition (iv).
\end{proof}

\begin{lem}
For all $m \in \mathbb{Z},$ the pullback morphism $\pi_{2}^{\ast}:\textnormal{Pic}^{m}(D) \rightarrow \textnormal{Pic}^{2m}(C)$ is injective.
\end{lem}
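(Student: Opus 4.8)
The plan is to reduce the assertion to the triviality of the kernel of $\pi_2^\ast$ on degree-zero Picard groups, and then to eliminate that kernel using the splitting of $(\pi_2|_C)_\ast\mathcal{O}_C$ recorded in Lemma \ref{branchbundle}.

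First I would observe that, for any fixed $m$, injectivity of $\pi_2^\ast : \textnormal{Pic}^m(D) \to \textnormal{Pic}^{2m}(C)$ is equivalent to the statement that a line bundle $\mathcal{M}$ on $D$ with $\pi_2^\ast\mathcal{M} \cong \mathcal{O}_C$ must be trivial: indeed, if $\pi_2^\ast\mathcal{M}_1 \cong \pi_2^\ast\mathcal{M}_2$ with $\mathcal{M}_1,\mathcal{M}_2 \in \textnormal{Pic}^m(D)$, then $\mathcal{M} := \mathcal{M}_1 \otimes \mathcal{M}_2^{-1}$ lies in $\textnormal{Pic}^0(D)$ and pulls back to $\mathcal{O}_C$. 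So it suffices to prove: if $\mathcal{M} \in \textnormal{Pic}(D)$ satisfies $\pi_2^\ast\mathcal{M} \cong \mathcal{O}_C$, then $\mathcal{M} \cong \mathcal{O}_D$. Such an $\mathcal{M}$ automatically has degree $0$, since $\pi_2|_C : C \to D$ is finite of degree $2$, so that $0 = \deg(\pi_2^\ast\mathcal{M}) = 2\deg(\mathcal{M})$; in particular the statement for $m \neq 0$ is vacuous apart from the case $m = 0$.

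Next I would invoke the projection formula. Since $\pi_2|_C$ is finite and flat of degree $2$, Lemma \ref{branchbundle} gives
\[
(\pi_2|_C)_\ast\pi_2^\ast\mathcal{M} \;\cong\; \mathcal{M} \otimes (\pi_2|_C)_\ast\mathcal{O}_C \;\cong\; \mathcal{M} \oplus \mathcal{M}(-1).
\]
Hence, if $\pi_2^\ast\mathcal{M} \cong \mathcal{O}_C$, passing to global sections yields $0 \neq H^0(C,\mathcal{O}_C) \cong H^0(D,\mathcal{M}) \oplus H^0(D,\mathcal{M}(-1))$. Since $\mathcal{O}_D(1)$ has positive degree on $D$ and $\deg(\mathcal{M}) = 0$, the line bundle $\mathcal{M}(-1)$ has negative degree, so $H^0(D,\mathcal{M}(-1)) = 0$. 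Therefore $H^0(D,\mathcal{M}) \neq 0$; as $D$ is an integral curve and $\deg(\mathcal{M}) = 0$, any nonzero section of $\mathcal{M}$ is nowhere vanishing, forcing $\mathcal{M} \cong \mathcal{O}_D$. This proves the lemma.

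I do not anticipate a genuine obstacle: the only points needing care are the bookkeeping that handles all $m$ at once (first paragraph) and checking that Lemma \ref{branchbundle} applies verbatim in this setting. If one preferred to avoid citing Lemma \ref{branchbundle}, the same argument goes through with $\mathcal{O}_D(-1)$ replaced by the line bundle $\mathcal{N}$ defining the double cover $\pi_2|_C$, using only that $\mathcal{N}^{\otimes 2} \cong \mathcal{O}_D(-B)$ for a nonzero effective divisor $B$, whence $\deg(\mathcal{N}) < 0$ and $H^0(D,\mathcal{M} \otimes \mathcal{N}) = 0$.
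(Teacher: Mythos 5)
Your proof is correct and follows essentially the same route as the paper: reduce to showing that a line bundle $\mathcal{M}$ on $D$ with $\pi_2^{\ast}\mathcal{M} \cong \mathcal{O}_C$ is trivial, then use the projection formula together with Lemma \ref{branchbundle} to get $H^0(\mathcal{O}_C) \cong H^0(\mathcal{M}) \oplus H^0(\mathcal{M}(-1)) \cong H^0(\mathcal{M})$, so that the degree-zero bundle $\mathcal{M}$ has a section and is therefore trivial. Your explicit reduction to the degree-zero case and the observation that $H^0(\mathcal{M}(-1))=0$ by negativity of degree are exactly the (implicit) steps in the paper's argument.
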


\begin{proof}
It is enough to show that if $\mathcal{M}$ is a line bundle on $D$ for which $\pi_{2}^{\ast}\mathcal{M} \cong \mathcal{O}_{C},$ then $\mathcal{M} \cong \mathcal{O}_{D}.$  Given that $2 \cdot c_{1}(\mathcal{M})=c_{1}({\pi_{2}}^{\ast}\mathcal{M})=0,$ we need only check that $h^{0}(\mathcal{M})=1.$  By Lemma \ref{branchbundle}, we have that $H^{0}(\mathcal{O}_{C}) \cong H^{0}(\mathcal{M}) \oplus H^{0}(\mathcal{M}(-1)) \cong H^{0}(\mathcal{M});$ this concludes the proof.
\end{proof}

\begin{prop}
\label{bncomp}
$(\pi_{2}^{\ast})(W^{1}_{7}(D))$ is contained in $W^{1}_{14}(C),$ and there is a unique irreducible component $\mathcal{W}$ of $W^{1}_{14}(C)$ which contains $(\pi_{2})^{\ast}W^{1}_{7}(D)$ and is generically smooth of dimension 7.
\end{prop}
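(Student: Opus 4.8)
The plan is to obtain the inclusion from the projection formula and to pin down $\mathcal{W}$ by producing a general point of $\pi_{2}^{\ast}W^{1}_{7}(D)$ at which $W^{1}_{14}(C)$ is smooth of the expected dimension $7$, via Proposition \ref{tanspace}. First I would record the numerics: since $C=\pi_{2}^{-1}(D)$ lies in $|\mathcal{O}_{X}(3)|$ it has genus $19$, so $\rho(19,1,14)=7$ is the expected dimension of $W^{1}_{14}(C)$, while $\deg \mathcal{O}_{C}(1)=12$ and $\deg \mathcal{O}_{D}(1)=6$. For any $\mathcal{L}' \in W^{1}_{7}(D)$ the line bundle $\pi_{2}^{\ast}\mathcal{L}'$ has degree $14$, and the projection formula together with Lemma \ref{branchbundle} gives
\[
H^{0}(C,\pi_{2}^{\ast}\mathcal{L}') \cong H^{0}(D,\mathcal{L}') \oplus H^{0}(D,\mathcal{L}'(-1)),
\]
whence $h^{0}(\pi_{2}^{\ast}\mathcal{L}') \geq h^{0}(\mathcal{L}') \geq 2$ and $\pi_{2}^{\ast}\mathcal{L}' \in W^{1}_{14}(C)$; together with the injectivity of $\pi_{2}^{\ast}$ from the preceding lemma and the irreducibility of $W^{1}_{7}(D) \cong W_{5}(D)$, this exhibits $\pi_{2}^{\ast}W^{1}_{7}(D)$ as a $5$-dimensional irreducible subvariety of $W^{1}_{14}(C)$.

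Next I would fix a general $\mathcal{L}' \in W^{1}_{7}(D)$, so that conditions (i)--(iv) of Proposition \ref{genus7} hold (its proof establishes them for $\mathcal{L}'$ in a dense open subset), and put $\mathcal{L} := \pi_{2}^{\ast}\mathcal{L}'$. Feeding $h^{0}(\mathcal{L}')=2$ and $h^{0}(\mathcal{L}'(-1))=0$ into the displayed isomorphism yields $h^{0}(\mathcal{L})=2$, so $\mathcal{L} \notin W^{2}_{14}(C)$; and since the two sections of $\mathcal{L}$ are pulled back from the basepoint-free pencil $|\mathcal{L}'|$, the pencil $|\mathcal{L}|$ is basepoint-free. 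Hence $\mathcal{L}$ sits in a short exact sequence on $C$ of the same form as (\ref{split2}); twisting this by $\omega_{C} \otimes \mathcal{L}^{-1} \cong \mathcal{L}^{-1}(3)$ (recall $\omega_{C} \cong \mathcal{O}_{C}(3)$) and taking cohomology identifies the kernel of the cup-product map $\mu \colon H^{0}(\mathcal{L}) \otimes H^{0}(\omega_{C} \otimes \mathcal{L}^{-1}) \to H^{0}(\omega_{C})$ with $H^{0}(C,\mathcal{L}^{-2}(3))$, just as in the proof of Proposition \ref{lbexist}. Since $\mathcal{L}^{-2}(3) \cong (\pi_{2}|_{C})^{\ast}(\mathcal{L}'^{-2}(3))$, a second use of Lemma \ref{branchbundle} gives
\[
H^{0}(C,\mathcal{L}^{-2}(3)) \cong H^{0}(D,\mathcal{L}'^{-2}(3)) \oplus H^{0}(D,\mathcal{L}'^{-2}(2)),
\]
whose second summand vanishes because $\mathcal{L}'^{-2}(2)$ has negative degree and whose first summand vanishes by condition (iv) of Proposition \ref{genus7}, which via Serre duality on $D$ says exactly that $h^{0}(\mathcal{L}'^{-2}(3))=0$. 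Thus $\mu$ is injective, and Proposition \ref{tanspace} shows that $W^{1}_{14}(C)$ is smooth of dimension $7$ at $\mathcal{L}$.

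Finally I would conclude by a standard general-position argument. A point $\mathcal{L} = \pi_{2}^{\ast}\mathcal{L}'$ produced above is a smooth point of the scheme $W^{1}_{14}(C)$, hence lies on a unique irreducible component, which has dimension $7$ and is smooth at $\mathcal{L}$, and $W^{1}_{14}(C)$ coincides with that component in a neighbourhood of $\mathcal{L}$. Such $\mathcal{L}$ range over a nonempty open subset $V$ of the irreducible variety $\pi_{2}^{\ast}W^{1}_{7}(D)$ (the conditions (i)--(iv) being open), so $V$ is irreducible and connected, and the preceding remark makes the component of $W^{1}_{14}(C)$ carrying a point of $V$ locally constant on $V$; hence all of $V$ lies on one component $\mathcal{W}$. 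Being closed, $\mathcal{W}$ contains $\overline{V} = \pi_{2}^{\ast}W^{1}_{7}(D)$; containing smooth points of $W^{1}_{14}(C)$, it is generically smooth of dimension $7$; and any component of $W^{1}_{14}(C)$ containing $\pi_{2}^{\ast}W^{1}_{7}(D)$ must pass through a point of $V$ and therefore equals $\mathcal{W}$, which gives uniqueness. I expect the only slightly delicate step to be the bookkeeping of degrees and of the two splittings furnished by Lemma \ref{branchbundle} (in particular verifying $\ker \mu \cong H^{0}(C,\mathcal{L}^{-2}(3))$ and that this group is pulled back from $D$), the remainder being a routine assembly of Propositions \ref{tanspace}, \ref{genus7} and \ref{lbexist} with the smooth-point fact.
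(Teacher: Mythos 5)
Your proposal is correct and follows essentially the same route as the paper: the inclusion via Lemma \ref{branchbundle} and the projection formula, then smoothness of $W^1_{14}(C)$ at a general point of $\pi_2^{\ast}W^1_7(D)$ by computing $\ker\mu$ with the basepoint-free pencil trick and killing it using Lemma \ref{branchbundle} together with condition (iv) of Proposition \ref{genus7}. The only cosmetic difference is that you split $H^0(C,\mathcal{L}^{-2}(3))$ over $D$ and apply Serre duality there, whereas the paper applies Serre duality on $C$ first and then splits $H^1(\pi_2^{\ast}\mathcal{L}'^{2})$ — the two computations are equivalent.
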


\begin{proof}
Lemma \ref{branchbundle} implies that for all $\mathcal{L}' \in W^{1}_{7}(D),$ we have that $h^{0}(\pi_{2}^{\ast}\mathcal{L}')=h^{0}(\mathcal{L}')+h^{0}(\mathcal{L}'(-1)) \geq 2,$ so the inclusion of $(\pi_{2}^{\ast})(W^{1}_{7}(D))$ in $W^{1}_{14}(C)$ immediately follows.  For the rest of the proof, we fix $\mathcal{L}' \in W^{1}_{7}(D)$ sufficiently general enough to satisfy conditions (i) through (iv) in the statement of Proposition \ref{genus7}.

Let $\mathcal{W}$ be an irreducible component of $W^{1}_{14}(C)$ which contains $(\pi_{2}^{\ast})(W^{1}_{7}(D)).$  According to Proposition 4.2 in Chapter IV of \cite{ACGH}, $\mathcal{W}$ is smooth of (expected) dimension 7 at $\pi_{2}^{\ast}\mathcal{L}$ if and only if the multiplication map $\mu:H^{0}(\pi_{2}^{\ast}\mathcal{L}') \otimes H^{0}(\omega_{C} \otimes \pi_{2}^{\ast}\mathcal{L}'^{-1}) \rightarrow H^{0}(\omega_{C})$ is injective.  Since $\omega_{C} \cong \mathcal{O}_{C}(3),$ the basepoint-free pencil trick implies that the kernel of $\mu$ is isomorphic to $H^{0}(\pi_{2}^{\ast}\mathcal{L}'^{-2}(3)) \cong H^{1}(\pi_{2}^{\ast}\mathcal{L}'^{2})^{\vee}.$  Another application of Lemma \ref{branchbundle} shows that by our assumptions on $\mathcal{L}',$ we have
\begin{equation}
H^1(\pi_{2}^{\ast}\mathcal{L}'^{2}) \cong H^1(\mathcal{L}'^2) \oplus H^1(\mathcal{L}'^{\otimes 2}(-1)) \cong 0
\end{equation}
It follows that the general point of $(\pi_{2}^{\ast})(W^{1}_{7}(D))$ is a smooth point of $W^{1}_{14}(C)$.  This implies that $\mathcal{W}$ is generically smooth and also that $\mathcal{W}$ is the unique irreducible component of $W^{1}_{14}(C)$ which contains $(\pi_{2}^{\ast})(W^{1}_{7}(D)).$
\end{proof}
\begin{prop}
\label{3outof4}
The general member $\mathcal{L}$ of $\mathcal{W}$ satisfies the following:
\begin{itemize}
\item[(i)]{$\mathcal{L}$ and $\mathcal{L}^{-1}(3) \cong \omega_{C} \otimes \mathcal{L}^{-1}$ are globally generated.}
\item[(ii)]{$h^{0}(\mathcal{L})=2.$}
\item[(iii)]{$h^{0}(\mathcal{L}(-1))=0.$}
\end{itemize}
\end{prop}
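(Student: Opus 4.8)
The plan is to exploit the irreducibility of $\mathcal{W}$ (Proposition \ref{bncomp}) together with the fact that conditions (i)--(iii) each cut out a Zariski-open subset of $\mathcal{W}$: the locus on which $h^0(\mathcal{L})$ or $h^0(\mathcal{L}(-1))$ jumps is closed by semicontinuity, and the locus of line bundles carrying a base point is the image of an addition morphism from a projective variety, hence closed.  Thus it will suffice to exhibit a single $\mathcal{L} \in \mathcal{W}$ at which all three conditions hold, since a nonempty Zariski-open subset of an irreducible variety is dense.  The natural candidate is $\mathcal{L} = \pi_2^{\ast}\mathcal{L}'$ with $\mathcal{L}'$ a general member of $W^1_7(D)$: this lies in $\mathcal{W}$ because $(\pi_2^{\ast})(W^1_7(D)) \subseteq \mathcal{W}$ by Proposition \ref{bncomp}, and we may assume $\mathcal{L}'$ satisfies (i)--(iv) of Proposition \ref{genus7}.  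The virtue of this choice is that the double cover $\pi_2|_C : C \to D$ reduces all of the needed cohomology to cohomology on $D$, so none of the Brill--Noether dimension estimates used for a general $C \in |\mathcal{O}_X(3)|$ in Proposition \ref{lbexist} will be required.

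To carry out the verification at $\mathcal{L} = \pi_2^{\ast}\mathcal{L}'$ I would first record the identifications $\mathcal{O}_C(1) = \pi_2^{\ast}\mathcal{O}_D(1)$ (as already used in the proof of Lemma \ref{branchbundle}) and $\omega_C \cong \mathcal{O}_C(3) = \pi_2^{\ast}\mathcal{O}_D(3)$ (adjunction on the K3 surface $X$), and the consequence of Lemma \ref{branchbundle} and the projection formula that
\begin{equation}
h^0(C, \pi_2^{\ast}\mathcal{M}) = h^0(D, \mathcal{M}) + h^0(D, \mathcal{M}(-1))
\end{equation}
for every line bundle $\mathcal{M}$ on $D$.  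Then (i) follows because $\mathcal{L} = \pi_2^{\ast}\mathcal{L}'$ and $\omega_C \otimes \mathcal{L}^{-1} \cong \pi_2^{\ast}(\mathcal{L}'^{-1}(3))$, and pullback preserves global generation, so Proposition \ref{genus7}(i) applies directly; (ii) follows by applying the displayed formula with $\mathcal{M} = \mathcal{L}'$, which gives $h^0(\mathcal{L}) = h^0(\mathcal{L}') + h^0(\mathcal{L}'(-1)) = 2 + 0$ by Proposition \ref{genus7}(ii),(iii); and (iii) follows by applying it with $\mathcal{M} = \mathcal{L}'(-1)$, which gives $h^0(\mathcal{L}(-1)) = h^0(\mathcal{L}'(-1)) + h^0(\mathcal{L}'(-2))$, the first term vanishing by Proposition \ref{genus7}(iii) and the second because $\deg \mathcal{L}'(-2) = 7 - 12 < 0$.

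I do not anticipate a serious obstacle: the substantive content sits in Proposition \ref{genus7} (the existence of $\mathcal{L}'$ on the genus-$7$ curve $D$) and in Lemma \ref{branchbundle} (the splitting of $(\pi_2|_C)_{\ast}\mathcal{O}_C$), both already in hand.  The one point that needs care is the dimension mismatch: $(\pi_2^{\ast})(W^1_7(D))$ is only $5$-dimensional (it is isomorphic to $W_5(D)$) while $\mathcal{W}$ is $7$-dimensional, so the argument genuinely depends on $\mathcal{W}$ being irreducible rather than on any direct description of its general point; one should also be slightly careful to confirm that the three loci in question really are Zariski-open in $\mathcal{W}$, which is the semicontinuity/properness observation above.
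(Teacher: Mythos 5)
Your proposal is correct and follows essentially the same route as the paper: conditions (i)--(iii) are Zariski-open on the irreducible component $\mathcal{W}$, and the pullback $\pi_2^{\ast}\mathcal{L}'$ of a general $\mathcal{L}' \in W^1_7(D)$ (via Lemma \ref{branchbundle}, the projection formula, and Proposition \ref{genus7}) furnishes a point satisfying all three. The only cosmetic difference is that the paper certifies openness of (i) via the evaluation morphism of a Poincar\'{e} bundle rather than your addition-map argument, and it leaves the cohomological bookkeeping on $D$ (which you spell out) implicit.
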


\begin{proof}
Let $\mathcal{P}$ be the restriction of a degree-14 Poincar\'{e} bundle on $C \times \textnormal{Pic}^{14}(C)$ to $C \times \mathcal{W},$ and let $\mathfrak{p}:C \times \mathcal{W} \rightarrow \mathcal{W}$ be projection.  For each $\mathcal{L} \in \mathcal{W},$ the restriction of the natural evaluation morphism $\textnormal{ev}: \mathfrak{p}^{\ast}\mathfrak{p}_{\ast}\mathcal{P} \rightarrow \mathcal{P}$ to each fiber ${\mathfrak{p}^{-1}}(\mathcal{L}) \cong C$ is the evaluation map $\textnormal{ev}_{\mathcal{L}}:H^{0}(\mathcal{L}) \otimes \mathcal{O}_{C} \rightarrow \mathcal{L},$ whose surjectivity is equivalent to $\mathcal{L}$ being globally generated, so (i) is a Zariski-open condition on $\mathcal{W}.$  Furthermore, we have that $h^{0}(\mathcal{L}) \geq 2$ and $h^{0}(\mathcal{L}(-1)) \geq 0$ for all $\mathcal{L} \in \mathcal{W},$ so (ii) and (iii) are also Zariski-open conditions on $\mathcal{W}$ by semicontinuity.

Propositions \ref{genus7} and \ref{bncomp} imply that the general member of $(\pi_{2}^{\ast})(W^{1}_{7}(D))$ is a smooth point of $\mathcal{W}$ which satisfies conditions (i),(ii), and (iii); this concludes the proof.
\end{proof}
\begin{prop}
\label{ggstable}
Let $\mathcal{E}$ be a rank-2 vector bundle constructed from the general member $\mathcal{L}$ of $\mathcal{W}.$  Then $\mathcal{E}$ is weakly Ulrich, globally generated, and simple.  In particular, the isomorphism class of $\mathcal{E}$ lies in $\mathcal{M}_{H}^{\mathfrak{g}}(2,3H,14).$
\end{prop}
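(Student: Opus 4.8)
The plan is to recognize the bundle $\mathcal{E}$ attached to a general $\mathcal{L}\in\mathcal{W}$ as an instance of the construction (\ref{elmod}) and to invoke Proposition \ref{weaklyequiv}, so the core of the proof is to verify its four hypotheses (i)--(iv) for $\mathcal{L}$. Conditions (i), (ii) and (iii) of Proposition \ref{weaklyequiv} are literally conditions (i), (ii) and (iii) of Proposition \ref{3outof4}, which hold for the general member of $\mathcal{W}$; so the first genuine task is condition (iv), the vanishing $h^{0}(\mathcal{L}^{-2}(3))=0$.

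For (iv) I would argue by openness: the locus in $\mathcal{W}$ where $h^{0}(\mathcal{L}^{-2}(3))\neq 0$ is Zariski-closed, and since $\mathcal{W}$ is irreducible it is enough to produce one $\mathcal{L}\in\mathcal{W}$ with $h^{0}(\mathcal{L}^{-2}(3))=0$. The candidate is $\mathcal{L}=\pi_{2}^{\ast}\mathcal{L}'$ for $\mathcal{L}'$ general in $W^{1}_{7}(D)$, which lies in $\mathcal{W}$ by Proposition \ref{bncomp}. Then Lemma \ref{branchbundle} and the projection formula give $h^{0}(\mathcal{L}^{-2}(3))=h^{0}(\mathcal{L}'^{-2}(3))+h^{0}(\mathcal{L}'^{-2}(2))$; the second summand vanishes because $\mathcal{L}'^{-2}(2)$ has negative degree, and the first equals $h^{1}(\mathcal{L}'^{\otimes 2}(-1))$ by Serre duality together with $\omega_{D}\cong\mathcal{O}_{D}(2)$, which is zero by condition (iv) of Proposition \ref{genus7}. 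Thus the general $\mathcal{L}\in\mathcal{W}$ meets hypotheses (i)--(iv), and Proposition \ref{weaklyequiv} gives that $\mathcal{E}$ has $c_{1}=3H$, $c_{2}=14$, is weakly Ulrich, is simple, and is globally generated in codimension one.

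To promote ``globally generated in codimension one'' to ``globally generated'' I would use the dual sequence (\ref{dualtwist1}): the argument in the proof of Proposition \ref{weaklyequiv}(c) shows $\mathcal{E}$ is globally generated away from the base locus, inside $X$, of the quotient sheaf $\mathcal{L}^{-1}(3)$; but $\mathcal{L}^{-1}(3)\cong\omega_{C}\otimes\mathcal{L}^{-1}$ is globally generated on $C$ by Proposition \ref{3outof4}(i) and all of its sections lift to $H^{0}(\mathcal{E})$ because $H^{1}(\mathcal{O}_{X})=0$, so that base locus is empty. Finally, since $\mathcal{E}$ is simple with $c_{1}=3H$ and $c_{2}=14$, Theorem \ref{simpsmooth} places $[\mathcal{E}]$ on a smooth $14$-dimensional component of the moduli space of simple sheaves; to conclude that $[\mathcal{E}]$ lies in $\mathcal{M}_{H}^{\mathfrak{g}}(2,3H,14)$ one combines Proposition \ref{halfstab} with the assertion that $\mathcal{E}$ has no $H$-destabilizing line subsheaf.

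That last assertion is the step I expect to require the real work. The numerical analysis behind Propositions \ref{curvenef} and \ref{halfstab}, together with $h^{0}(\mathcal{E})=8$ and the fact that the intersection form on $X_{f}$ is $2\mathbb{Z}$-valued, confines an $H$-destabilizing line subsheaf $\mathcal{M}\hookrightarrow\mathcal{E}$ to the cases $\mathcal{M}\cdot H\in\{6,8\}$. Using the Serre presentation (\ref{serre1}) of Proposition \ref{weakserre} --- and the torsion-freeness of $\mathcal{E}/\mathcal{O}_{X}\cong\mathcal{I}_{Z|X}(3)$ to rule out $\mathcal{O}_{X}\subseteq\mathcal{M}$ --- one sees that the length-$14$ scheme $Z\subseteq C$ must lie on a curve in $|3H-\mathcal{M}|$, necessarily of degree $12-\mathcal{M}\cdot H\in\{4,6\}$. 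The case $\mathcal{M}\cdot H=8$ is then impossible, since $C$ meets such a degree-$4$ curve in length $C\cdot(3H-\mathcal{M})=12<14$; the residual strictly-semistabilizing case $\mathcal{M}\cdot H=6$, $\mathcal{M}^{2}=4$ is excluded outright by Proposition \ref{curvenef}; and the remaining case $\mathcal{M}\cdot H=6$ is handled by a dimension count in the spirit of Propositions \ref{quadriccb}--\ref{bigdim}: the subschemes $Z$ produced this way sweep out a locus in $C^{(14)}$ of dimension at most $\dim|3H-\mathcal{M}|$, which is too small to contain the general member of the $8$-dimensional family $\{Z\in|\mathcal{L}|:\mathcal{L}\in\mathcal{W}\}$. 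Bounding $\dim|3H-\mathcal{M}|$ for the finitely many relevant classes $\mathcal{M}$ is the estimate I would be most careful with.
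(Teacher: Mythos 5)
For the three substantive assertions your argument is the paper's argument: weakly Ulrich and simple come from Proposition \ref{weaklyequiv} applied to the general $\mathcal{L}\in\mathcal{W}$, and global generation comes from (\ref{dualtwist1}) together with Proposition \ref{3outof4}(i). Your extra care with hypothesis (iv) is warranted and correctly executed: Proposition \ref{3outof4} indeed only covers (i)--(iii), and your semicontinuity argument from $\pi_{2}^{\ast}\mathcal{L}'$, using Lemma \ref{branchbundle}, $\omega_{D}\cong\mathcal{O}_{D}(2)$ and Proposition \ref{genus7}(iv), is exactly the vanishing $H^{0}(\pi_{2}^{\ast}\mathcal{L}'^{-2}(3))\cong H^{1}(\pi_{2}^{\ast}\mathcal{L}'^{2})^{\vee}=0$ that the paper establishes inside the proof of Proposition \ref{bncomp}. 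Up to this point the two proofs coincide.

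Where you diverge is the final clause. The paper's proof simply stops after global generation and offers no argument for the membership statement (the notation $\mathcal{M}_{H}^{\mathfrak{g}}(2,3H,14)$ is never defined in the text), so your stability argument is additional material rather than a reconstruction of something in the paper. As a sketch it is mostly sound but has two soft spots. First, the ``confinement to $\mathcal{M}\cdot H\in\{6,8\}$'' is not a consequence of $h^{0}(\mathcal{E})=8$ as stated; however, it is also unnecessary, because your own mechanism already handles every class with $\mathcal{M}\cdot H\geq 8$ uniformly: the composite $\mathcal{M}\to\mathcal{E}\to\mathcal{I}_{Z|X}(3)$ is nonzero (otherwise $\mathcal{M}\subseteq\mathcal{O}_{X}$, impossible since $\mathcal{M}\cdot H>0$), so $Z$ lies on a curve $B\in|\mathcal{O}_X(3)\otimes\mathcal{M}^{-1}|$, and since $C$ is irreducible of degree $12>\deg B$ one gets $14=\ell(Z)\leq C\cdot B=36-3\,\mathcal{M}\cdot H\leq 12$, a contradiction. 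Second, after excluding $\mathcal{M}\cdot H=6,\ \mathcal{M}^{2}=4$ via Lemma \ref{K3eff} and Proposition \ref{curvenef}, the genuinely remaining cases are $\mathcal{M}\cdot H=6$ with $\mathcal{M}^{2}\in\{6,8\}$ (for instance $\mathcal{M}=\mathcal{O}_{X}(H+\tilde{e})$ for a conic $\tilde{e}$ realizes $\mathcal{M}\cdot H=6$, $\mathcal{M}^{2}=6$ and is effective), which are not touched by Proposition \ref{curvenef}; these are exactly what your dimension count must dispose of, and that count --- comparing $\dim|\mathcal{O}_X(3)\otimes\mathcal{M}^{-1}|$ for the finitely many such effective classes with the $8$-dimensional family of subschemes $Z$ --- is plausible but left unverified, as you acknowledge. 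So: your proof of the stated properties matches the paper; the stability addendum goes beyond the paper and would need that last estimate carried out to be complete.
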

\begin{proof}
Proposition \ref{weaklyequiv} implies that $\mathcal{E}$ is simple and weakly Ulrich.  The exact sequence (\ref{dualtwist1}) implies that $\mathcal{E}$ is globally generated away from the base locus of $\mathcal{L}^{-1}(3).$  Since the latter is a globally generated line bundle on $C$ by (i) of Proposition \ref{3outof4}, $\mathcal{E}$ is globally generated.
\end{proof}

\begin{prop}
Let $\mathcal{E}$ be a rank-2 vector bundle constructed from the general member $\mathcal{L}$ of $\mathcal{W}.$  Then $\mathcal{E}$ is not Ulrich.
\end{prop}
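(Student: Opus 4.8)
The plan is to reduce the assertion ``$\mathcal E$ is not Ulrich'' to showing that the length-$14$ subscheme $Z$ occurring in a Serre presentation of $\mathcal E$ lies on a quadric of $\mathbb P^3$, and then to verify this by passing through the double cover $\pi_2|_C\colon C\to D$.

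First I would record the reduction. By Proposition~\ref{ggstable} and the argument of Proposition~\ref{weakserre}, $\mathcal E$ fits into an exact sequence $0\to\mathcal O_X\to\mathcal E\to\mathcal I_{Z|X}(3)\to 0$ with $Z\in|\mathcal L|\subseteq C^{(14)}$. Twisting by $\mathcal O_X(-1)$ and taking cohomology, using $H^0(\mathcal O_X(-1))=H^1(\mathcal O_X(-1))=0$ and $H^2(\mathcal E(-1))=0$ (the latter since $\mathcal E$ is weakly Ulrich), and combining this with $\chi(\mathcal I_{Z|X}(2))=\chi(\mathcal O_X(2))-\chi(\mathcal O_Z(2))=10-14=-4$ and $h^2(\mathcal I_{Z|X}(2))=0$, one obtains exactly as in the proof of Proposition~\ref{quadriculrich} the identity $h^1(\mathcal E(-1))=h^0(\mathcal I_{Z|X}(2))$. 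Since an Ulrich bundle is ACM (Proposition~\ref{ulrichacm}), it would satisfy $h^1(\mathcal E(-1))=0$; so it suffices to produce a nonzero section of $\mathcal I_{Z|X}(2)$.

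Next I would descend to $C$, and then to $D$. Because $C\in|\mathcal O_X(3)|$ we have $\mathcal I_{C|X}(2)\cong\mathcal O_X(-1)$, and $H^0(\mathcal O_X(-1))=H^1(\mathcal O_X(-1))=0$, so restriction gives $H^0(\mathcal I_{Z|X}(2))\cong H^0(\mathcal O_C(2)\otimes\mathcal O_C(-Z))$; thus $\mathcal E$ is Ulrich if and only if $\mathcal O_C(2)\otimes\mathcal O_C(-Z)$ is not effective, equivalently if and only if $\mathcal O_C(1)\otimes\mathcal L$ is nonspecial. Now $\mathcal O_X(1)=\pi_2^*\mathcal O_S(1)$ gives $\mathcal O_C(2)=\pi_2^*\mathcal O_D(2)$, and since $D\in|{-3K_S}|$ on the degree-$2$ del Pezzo surface $S$ (where $\mathcal O_S(1)=-K_S$) adjunction gives $\mathcal O_D(2)=\omega_D$. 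For $\mathcal L=\pi_2^*\mathcal L'$ with $\mathcal L'\in W^1_7(D)$ and $Z=\pi_2^*Z'$, $Z'\in|\mathcal L'|$, Lemma~\ref{branchbundle} and the projection formula give
\[
h^0\!\big(\mathcal O_C(2)\otimes\mathcal O_C(-Z)\big)=h^0\!\big(\omega_D(-Z')\big)+h^0\!\big(\omega_D\otimes\mathcal O_D(-1)\otimes\mathcal O_D(-Z')\big)\ \ge\ h^0\!\big(\omega_D(-Z')\big),
\]
and since $\deg Z'=7=g(D)$, Serre duality and Riemann--Roch give $h^0(\omega_D(-Z'))=h^1(\mathcal O_D(Z'))=h^0(\mathcal L')-1\ge 1$. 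Hence $h^0(\mathcal I_{Z|X}(2))\ne 0$ and $\mathcal E$ is not Ulrich for every $\mathcal L$ in the positive-dimensional subfamily $\pi_2^*W^1_7(D)\subseteq\mathcal W$.

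The hard part will be upgrading this to the general member of $\mathcal W$: since $\pi_2^*W^1_7(D)$ is only $5$-dimensional inside the $7$-dimensional $\mathcal W$ and ``being Ulrich'' is a Zariski-open condition on $\mathcal W$ (Proposition~\ref{opencond}), vanishing of $h^0(\mathcal I_{Z|X}(2))$ on a proper closed sublocus is not by itself enough. One must instead show that $\mathcal O_C(1)\otimes\mathcal L$ is special for \emph{every} $\mathcal L\in\mathcal W$, i.e. that $\mathcal W$ is contained in the translate $\mathcal O_C(2)-W_{10}(C)$ of an Abel image. I would attack this using the base-point-free pencil trick identification $h^0(\mathcal I_{Z|X}(2))=\dim\ker\!\big(H^0(\mathcal L)\otimes H^0(\mathcal O_C(2))\to H^0(\mathcal L\otimes\mathcal O_C(2))\big)$ together with the fact that $\mathcal O_C(2)=\pi_2^*\omega_D$ is pulled back from $D$, which lets one analyze the multiplication map through $\pi_2$ and through the splitting $H^0(\mathcal O_C(2))\cong H^0(\omega_D)\oplus H^0(\omega_D(-1))$ of Lemma~\ref{branchbundle}; a rank count there should force a nonzero kernel for all $\mathcal L\in\mathcal W$, after which the reduction in the first two paragraphs finishes the proof.
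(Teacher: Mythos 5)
Your first two paragraphs are correct and, on the locus $\pi_2^{\ast}W^1_7(D)$, they reproduce the paper's own argument in Serre-dual form: the paper reduces non-Ulrichness to non-surjectivity of the multiplication map $H^0(\mathcal{L})\otimes H^0(\mathcal{O}_X(1))\to H^0(\mathcal{L}(1))$ via (\ref{cokerisom}), then sets $\mathcal{L}=\pi_2^{\ast}\mathcal{L}'$ and uses Lemma \ref{branchbundle} to get $h^0(\mathcal{L}(1))=h^0(\mathcal{L}'(1))+h^0(\mathcal{L}')=7+2=9>8$. Since $\chi(\mathcal{L}(1))=8$, this is exactly your dual statement $h^1(\mathcal{E}(-1))=h^0(\mathcal{I}_{Z|X}(2))=h^0\bigl(\mathcal{O}_C(2)\otimes\mathcal{L}^{-1}\bigr)\geq h^1(\mathcal{L}')=1$, so your route through Proposition \ref{quadriculrich} and the quadric criterion is a legitimate repackaging of the same computation. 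Note, however, that this is \emph{all} the paper's proof does: it simply takes the general member of $\mathcal{W}$ to be of the form $\pi_2^{\ast}\mathcal{L}'$ and does not carry out the extension step that occupies your third paragraph; so the subtlety you raise (being Ulrich is open by Proposition \ref{opencond}, while the pullback locus is only $5$-dimensional inside the $7$-dimensional $\mathcal{W}$ of Proposition \ref{bncomp}) is real, and is not resolved in the published argument either.

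The gap in your proposal is precisely that deferred step, and the fix you sketch would not work as stated. The splitting you want to exploit comes from $(\pi_2|_C)_{\ast}\pi_2^{\ast}\mathcal{M}\cong\mathcal{M}\oplus\mathcal{M}(-1)$ and is only available for sheaves pulled back from $D$; for a general $\mathcal{L}\in\mathcal{W}$ neither $H^0(\mathcal{L})$, nor $H^0(\mathcal{L}(1))$, nor the multiplication map decomposes, so there is nothing to ``analyze through $\pi_2$'' -- only $H^0(\mathcal{O}_C(2))\cong H^0(\omega_D)\oplus H^0(\omega_D(-1))$ splits, and that alone gives no constraint. Moreover no rank count can force a kernel: by the base-point-free pencil trick the kernel of $H^0(\mathcal{L})\otimes H^0(\mathcal{O}_C(2))\to H^0(\mathcal{L}(2))$ \emph{is} $H^0(\mathcal{O}_C(2)\otimes\mathcal{L}^{-1})$, the very group you must show is nonzero, and the dimensions are unhelpful: $h^0(\mathcal{O}_C(2))=10$ while $\deg\mathcal{L}(2)=38>2g(C)-2$ gives $h^0(\mathcal{L}(2))=20=2\cdot 10$, so source and target have equal dimension; likewise the degree-one map has source of dimension $8$ and target of dimension $8+h^1(\mathcal{L}(1))$, so the count is equivalent to the specialness of $\mathcal{L}(1)$, which is exactly what is in question. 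To prove the statement in the strong form you set up, one would need a genuine argument that $\mathcal{W}$ is contained in the translate $\mathcal{O}_C(2)-W_{10}(C)$ (i.e.\ that $\mathcal{L}(1)$ is special for the generic $\mathcal{L}\in\mathcal{W}$), and your proposal does not supply one; as it stands, what you have actually proved is non-Ulrichness for the bundles arising from $\pi_2^{\ast}W^1_7(D)$, which coincides with what the paper's proof establishes.
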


\begin{proof}
It is enough to check that $H^1(\mathcal{E}(-1)) \neq 0.$  By (\ref{cokerisom}), it suffices in turn to check that the multiplication map $H^0(\mathcal{O}_X(1))^2 \rightarrow H^0(\mathcal{L}(1))$ is not surjective.  Since $\mathcal{L}$ is a general member of $\mathcal{W},$ we have that $\mathcal{L} \cong \pi_2^{\ast}\mathcal{L}'$ for general $\mathcal{L}' \in W^1_7(D).$  Lemma \ref{branchbundle} then implies that
\begin{equation}
H^0(\mathcal{L}(1)) \cong H^0(\mathcal{L}'(1)) \oplus H^0(\mathcal{L}')
\end{equation}
Since $h^0(\mathcal{L}'(1))=7$ and $h^0(\mathcal{L}')=2,$ we have that $h^0(\mathcal{L}(1))=9,$ so it follows that the multiplication map cannot be surjective for dimension reasons.
\end{proof}

\begin{rem}
\label{cfweak}
In \cite{CF}, Chiantini and Faenzi show that a general quintic surface in $\mathbb{P}^3$ admits a linear Pfaffian representation by constructing a weakly Ulrich bundle on a specific quintic surface and deforming it to an Ulrich bundle on a nearby quintic surface (Section 6.3 in \textit{loc.~cit.}).  One can recover the Beauville-Schreyer result on general quartic surfaces by applying their deformation to the weakly Ulrich bundle we have just constructed. 
\end{rem}

\end{document}